\theoremstyle{plain}
\newtheorem{theorem}{Theorem}[section]
\newtheorem{lemma}[theorem]{Lemma}
\newtheorem{corollary}[theorem]{Corollary}
\newtheorem{proposition}[theorem]{Proposition}
\theoremstyle{definition}
\newtheorem{definition}[theorem]{Definition}
\theoremstyle{remark}
\newtheorem*{remark}{Remark}
\def \PP {\mathbb P}
\def \RR {\mathbb R}
\def \cB {\mathcal B}
\def \cP {\mathcal P}
\def \cC {\mathcal C}
\def \cT {\mathcal T}
\def \cI {\mathcal I}
\def \sC {\mathscr C}
\def \bu {\mathbf u}
\def \bv {\mathbf v}
\def \bx {\mathbf x}
\def \by {\mathbf y}
\def \be {\mathbf e}
\def \bt {\mathbf t}
\def \bh {\mathbf h}
\def \proj #1{\widehat{#1}}
\def \normal #1{\overline{#1}}
\DeclareMathOperator{\Span}{Span}
\DeclareMathOperator{\Cone}{Cone}
\DeclareMathOperator{\Affine}{Aff}
\DeclareMathOperator{\Interior}{Int}
\DeclareMathOperator{\Convex}{Conv}
\DeclareMathOperator{\Pyramid}{Pyr}
\DeclareMathOperator{\length}{\ell}
\DeclareMathOperator{\ball}{\mathsf{Ball}}
\title[More Infinite Ball Packings]{Even More Infinite Ball Packings\\from Lorentzian Root Systems}
\author{Hao Chen}
\address{Department of Mathematics and Computer Science, Technische Universiteit Eindhoven}
\email{hao.chen@tue.nl}
\keywords{Ball packing, Lorentzian Coxeter group, Coxeter polytope}
\subjclass[2010]{Primary 52C17, 20F55}
\thanks{This research was supported by the Deutsche Forschungsgemeinschaft
within the Research Training Group ``Methods for Discrete Structures'' (GRK
1408) and by the ERC Advanced Grant number 247029 ``SDModels'' while the author
was at Freie Universit\"at Berlin.}
\begin{document}

\begin{abstract}
	Boyd (1974) proposed a class of infinite ball packings that are generated by
	inversions.  Later, Maxwell (1983) interpreted Boyd's construction in terms
	of root systems in Lorentz space.  In particular, he showed that the
	space-like weight vectors correspond to a ball packing if and only if the
	associated Coxeter graph is of ``level~$2$''.   In Maxwell's work, the simple
	roots form a basis of the representations space of the Coxeter group.  In
	several recent studies, the more general based root system is considered,
	where the simple roots are only required to be positively independent.  In
	this paper, we propose a geometric version of ``level'' for the root system
	to replace Maxwell's graph theoretical ``level''.  Then we show that
	Maxwell's results naturally extend to the more general root systems with
	positively independent simple roots.  In particular, the space-like extreme
	rays of the Tits cone correspond to a ball packing if and only if the root
	system is of level $2$.  We also present a partial classification of
	level-$2$ root systems, namely the Coxeter $d$-polytopes of level-$2$ with
	$d+2$ facets.
\end{abstract}

\maketitle

\section{Introduction}\label{sec:Intro}
The title refers to a paper of Boyd titled ``A new class of infinite sphere
packings''~\cite{boyd1974}, in which he described a class of infinite ball
packings that are generated by inversions, generalising the famous Apollonian
disk packing.

Maxwell~\cite{maxwell1982} generalized Boyd's construction by interpreting the
ball packing as the space-like weights of an infinite root system in Lorentz
space.  In particular, Maxwell defined the ``level'' of a Coxeter graph as the
smallest integer $l$ such that the deletion of any $l$ vertices leaves a
Coxeter graph for finite or affine Coxeter system.  He then proved that the
space-like weights correspond to a ball packing if and only if the associated
Coxeter graph is of level $2$.

% In this paper, we propose a notion of ``geometric level'' and extend
% Maxwell's results.

Labb\'e and the author~\cite{chen2014} revisited Maxwell's work, and found
connections with recent works on limit roots.  \emph{Limit roots} are
accumulation points of the roots in projective space.  The notion was
introduced and studied in~\cite{hohlweg2014}, where the authors also proved
that limit roots lie on the isotropic cone of the quadratic space.  The
relations between limit roots and the imaginary cone are investigated
in~\cite{dyer2013} and~\cite{dyer2013b}.

For root systems in Lorentz space, the set of limit roots is equal to the limit
set of the Coxeter group seen as a Kleinian group acting on the hyperbolic
space~\cite{hohlweg2013}.  In~\cite{chen2014}, we proved that the accumulation
points of the roots and of the weights coincide on the light cone in the
projective space.  As a consequence, when the Coxeter graph is of level $2$,
the set of limit roots is the residue set of the ball packing described by Boyd
and Maxwell.  Furthermore, we gave a geometric interpretation for Maxwell's
notion of level, described the tangency graph of the Boyd--Maxwell ball packing
in terms of the Coxeter complex, and completed the enumeration of $326$ Coxeter
graphs of level $2$.

Comparing to~\cite{maxwell1982}, the root systems considered in most studies of
limit roots (e.g.~\cite{dyer2013b, hohlweg2014}) are more general in several
ways:

First, the root systems considered in~\cite{dyer2013b, hohlweg2014} are not
necessarily in a Lorentz space.  For non-Lorentzian root systems, we
conjectured in~\cite{chen2014} that the accumulation points of roots still
coincide with the accumulation points of weights.

Second, even if the root system is Lorentzian, the associated Coxeter graph is
not necessarily of level $2$.  The cases of level $\ne 2$ were also
investigated in~\cite{chen2014}.  It turns out that no ball appears if the
Coxeter graph is of level $1$, and balls may intersect if the Coxeter graph is
of level $>2$.  In either case, it remains true that the set of limit roots is
the residue set of the balls corresponding to the space-like weights.

The current paper deals with a third gap.  Maxwell only considered the case
where the simple roots form a basis of the representation space, so one can
define the fundamental weights as the dual basis.  However, in~\cite{dyer2013b}
and~\cite{hohlweg2014}, the more general based root system is considered, which
only requires the simple roots to be positively independent, but not
necessarily linearly independent.

In order to extend Maxwell's results to based root systems, we propose the
notion of ``level'' for root systems to replace Maxwell's graph theoretical
``level''.  The definition is based on the geometric interpretation
in~\cite{chen2014}.  When the simple roots are linearly independent as
in~\cite{maxwell1982}, our level for the root system and Maxwell's level for
the Coxeter graph coincide.  Moreover, in place of the weight vectors, we will
look at the extreme rays of the Tits cone.  Then we show in
Section~\ref{sec:Maxwell} that all the results in~\cite{maxwell1982}
and~\cite{chen2014} extend to the more general setting.  In particular:

\begin{theorem}[extending \cite{maxwell1982}*{Theorem 3.2}]\label{thm:packing}
	The space-like extreme rays of the Tits cone correspond to a ball packing if
	and only if the based root system is of level~$2$.
\end{theorem}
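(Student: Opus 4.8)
The plan is to recast the packing condition as a system of Lorentzian inner-product inequalities, to localise those inequalities to the rank-$2$ residues of the fundamental chamber, and finally to propagate the local estimates across the whole $W$-orbit by a convexity argument along galleries.

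\emph{Step 1 (the ball dictionary and a reverse Cauchy--Schwarz).} I would first fix the correspondence between a space-like ray and a ball: normalising a space-like vector $u$ so that $\langle u,u\rangle=1$ produces an oriented ball $\ball(u)$, and for two such vectors the mutual position of $\ball(u)$ and $\ball(v)$ is recorded by $\langle u,v\rangle$. The whole dictionary rests on the observation that $\langle u,v\rangle^{2}-\langle u,u\rangle\langle v,v\rangle\ge 0$ whenever $\Span(u,v)$ is Lorentzian or degenerate, the left-hand side being a perfect square in adapted coordinates; thus the interiors are disjoint or the balls are tangent exactly when $\Span(u,v)$ leaves the positive-definite range, and they overlap exactly when $\Span(u,v)$ is positive definite. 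Choosing the normalised representatives $\normal{u}$ to point away from the Tits cone then converts $|\langle\normal{u},\normal{v}\rangle|\ge 1$ into the packing inequality $\langle\normal{u},\normal{v}\rangle\le -1$, which also excludes the possibility of one ball containing another.

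\emph{Step 2 (local reduction to residues).} Next I would identify the space-like extreme rays. Each extreme ray is carried by a face of some type $I\subsetneq S$, and, exactly as in the basis case of \cite{maxwell1982} but now assuming only positive independence of the simple roots, the ray is space-like precisely when the standard parabolic subsystem attached to the complementary nodes is of indefinite type; this is the point at which the geometric notion of level from \cite{chen2014} must take over from Maxwell's graph-theoretic one. For two space-like extreme rays of a single chamber, the plane they span lies in the orthogonal complement of the roots vanishing on the face between them, so its signature is opposite to that of the residual subsystem obtained by deleting the two relevant nodes: a finite residue yields a Lorentzian plane (disjoint balls), an affine residue a degenerate plane (tangent balls), and an indefinite residue a positive-definite plane (overlap). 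Combined with Step 1, this shows that every same-chamber pair of balls is non-overlapping if and only if deleting any two nodes leaves a finite or affine system, that is, if and only if the root system is of level $\le 2$.

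\emph{Step 3 (assembling the two directions and the main obstacle).} Necessity is then immediate and purely local: if the system is not of level $\le 2$ some residue is indefinite and Step 2 exhibits an overlapping pair inside one chamber, whereas if the system is of level $\le 1$ there are no space-like extreme rays and hence no packing at all; together these force the level to be exactly $2$. For sufficiency I must also rule out overlaps between balls coming from \emph{different} chambers, and this is where the real difficulty lies. Reducing by the $W$-action to a fundamental-chamber ray $u$ against an orbit representative $w(v)$, I would induct on the length of $w$ along a gallery: crossing a wall is a reflection, and since a space-like extreme ray is fixed by every reflection in a wall to which it is orthogonal, each crossing either leaves the ball unchanged or carries it to the far side of the wall, where the reverse Cauchy--Schwarz of Step 1 in the $2$-plane spanned by the two rays keeps the inner product $\le -1$; running the induction down to the base case of Step 2 yields $\langle\normal{u},\normal{v}\rangle\le -1$ for every pair in the orbit. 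The decisive obstacle, beyond this gallery induction, is that the simple roots are now only positively independent, so the clean indexing of extreme rays by single omitted nodes and of residues by deleted pairs is no longer literal; the correct extension of the extreme-ray/parabolic dictionary, together with the bookkeeping of orientations needed to exclude nested balls, is exactly what the geometric definition of level is designed to supply, and checking that this definition makes Step 2 valid in the positively dependent cases is the heart of the argument.
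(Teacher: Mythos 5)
Your overall architecture---a local analysis of the weights $\Delta^*$ of the fundamental chamber followed by an induction on $\length(w)$ to propagate disjointness across the orbit $\Omega$---is the same as the paper's (which in turn follows Maxwell). The difficulty is that the two quantitative ingredients that make this architecture work are missing. First, disjointness of two space-like vectors $u,v$ requires \emph{both} that $\Span\{u,v\}$ is not positive definite \emph{and} that $\cB(\normal{u},\normal{v})\le 0$; your ``reverse Cauchy--Schwarz'' controls only $|\cB(\normal{u},\normal{v})|\ge 1$ and so cannot distinguish disjoint or tangent balls from nested ones. You defer this as ``bookkeeping of orientations,'' but for positively dependent simple roots it is the hard part: the paper spends most of Propositions~\ref{prop:level1} and~\ref{prop:level2} proving $\cB(\omega_I,\omega_J)\le 0$, and needs genuinely new projective machinery (the generalized Radon argument of Lemma~\ref{lem:radon}, the exclusion of the pyramid configuration) precisely because Maxwell's coordinate expansion in the simple roots is unavailable. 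Likewise your dictionary ``extreme ray $\leftrightarrow$ omitted node, residue $\leftrightarrow$ deleted pair'' must be replaced throughout by $1$- and $2$-facial subsets of the positive cone; you correctly identify this as the heart of the matter but do not supply the replacement.

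The gallery induction of Step 3 would also fail as stated. Applying the reverse Cauchy--Schwarz ``in the $2$-plane spanned by the two rays'' presupposes that $\Span\{\omega_I,w(\omega_J)\}$ is not positive definite, which is part of what is being proven; the paper establishes this by a separate argument (the projection of $w(\omega_J)$ onto $V_I$ would have to be simultaneously space-like and inside the Coxeter polytope of the level-$1$ subsystem $(\Delta_I,\Phi_I)$, contradicting Proposition~\ref{prop:level1}). Moreover, the actual induction
\[
\cB(\omega_I,w(\omega_J))=\cB(\omega_I,w'(\omega_J))-2\,\cB(\alpha_s,\omega_I)\,\cB(\alpha_s,w'(\omega_J))
\]
rests on the sign control of Proposition~\ref{prop:maxwell18} and has a critical degenerate case $\omega_I=w'(\omega_J)$, where the conclusion requires the a priori bound $\cB(\omega_I,\omega_I)\le 1$ of~\eqref{eqn:omega<1}; that bound is itself a nontrivial consequence of the level-$2$ hypothesis and appears nowhere in your outline, so the induction cannot close. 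Finally, the passage from ``space-like vectors of $\Omega$'' to ``space-like extreme rays of the Tits cone'' needs the observation that the stabilizers of space-like weights are special (connected, level-$1$) facial subgroups---this is exactly what makes the formulation in terms of extreme rays correct at level $2$ and false at level $\ge 3$, and it deserves an explicit argument.
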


The correspondance will be explained in Section~\ref{ssec:packing}.  We
consider the ball packing in the theorem as associated to the based root
system, then

\begin{theorem}[{extending \cite{chen2014}*{Theorem 3.8}}]\label{thm:limroots}
	The set of limit roots of a Lorentzian root system of level~$2$ is equal to
	the residual set of the associated ball packing.
\end{theorem}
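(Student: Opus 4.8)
The plan is to reprove \cite{chen2014}*{Theorem 3.8} in the based setting through the chain of identities
\[
\text{limit roots} \;=\; \text{accumulation points of the space-like extreme rays} \;=\; \text{residual set},
\]
all taken inside the projective isotropic cone $\proj{Q}$. That every limit root lies on $\proj{Q}$ is \cite{hohlweg2014}, so both ends of the chain live on the sphere at infinity carrying the packing supplied by Theorem~\ref{thm:packing}. The first identity is the analogue, with extreme rays of the Tits cone in place of fundamental weights, of the coincidence of accumulation points established in \cite{chen2014}; the second identity is purely geometric and is where the level-$2$ hypothesis is used.

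I would dispatch the geometric identity first. Each space-like extreme ray determines a ball, and by Theorem~\ref{thm:packing} the balls attached to the $W$-orbit of these rays are pairwise non-overlapping. A point of $\proj{Q}$ lying in an open ball is then interior to a single cap and cannot be an accumulation point of the remaining, pairwise disjoint balls; conversely, a point of the residual set is encircled by balls of arbitrarily small radius and is therefore a limit of ball centres, that is, of extreme-ray directions. Hence the accumulation set of the space-like extreme rays is exactly the complement of the open balls in $\proj{Q}$, namely the residual set.

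The main obstacle is the first identity, that the roots and the space-like extreme rays have the same accumulation points on $\proj{Q}$. One inclusion is routine: a limit root is a limit of roots, and a sign inequality of the kind already present in Maxwell's construction, relating a positive root to the space-like extreme ray defining a ball, places every root outside all open balls; thus limit roots lie in the residual set and, by the geometric identity already proved, among the accumulation points of the extreme rays. The reverse inclusion -- that every point surrounded by shrinking balls is in fact a limit of \emph{roots} -- is the delicate point. In \cite{chen2014} it rested on the fundamental weights forming a basis dual to the simple roots, which is no longer available once the simple roots are merely positively independent. I would replace this coordinate argument by an intrinsic one, realising the space-like extreme rays as limits of $W$-translates of the extreme rays of the fundamental chamber and showing that along any sequence of group elements of unbounded length these translates and the associated roots are driven to a common isotropic direction; the identification of the Kleinian limit set $\Lambda(W)$ with the set of limit roots \cite{hohlweg2013} offers an alternative route for this inclusion, provided one first checks that it too survives the passage to positively independent simple roots. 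Once the coincidence of accumulation points is secured, the chain of identities closes and the theorem follows.
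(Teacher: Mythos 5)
Your outline reproduces the intended structure of the argument, but the step you yourself single out as ``the delicate point'' --- that every accumulation point of the space-like extreme rays is also a limit of \emph{roots} --- is precisely the step that breaks when the simple roots are only positively independent, and you do not actually supply an argument for it. ``Showing that \dots these translates and the associated roots are driven to a common isotropic direction'' is a restatement of the claim rather than a proof, and the alternative route via the Kleinian limit set is explicitly left conditional on a verification you do not perform. The paper closes this gap with one concrete device introduced in Section~\ref{ssec:proj}: the height function is redefined intrinsically as $h(\bx)=\cB(\bh,\bx)$ with $\bh=-\sum_{\omega\in\Delta^*}\omega$, which is transverse to the positive cone, restricts to an $L_1$-norm on $\Phi^+$, and coincides with the coordinate-sum height of \cite{chen2014} exactly when the root basis is free. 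With this normalisation the quantitative comparison of $\proj\Phi$ and $\proj\Omega$ from \cite{chen2014} carries over verbatim, and that is the entire content of the paper's proof of Theorem~\ref{thm:limroots}. Without some such substitute for the coordinate expansion, your first identity remains unproved.

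A secondary issue: your ``purely geometric'' proof of the second identity is not valid for an arbitrary maximal packing. A point of the residual set lying on the boundary of one ball and in no other ball need not be an accumulation point of the remaining balls; maximality only excludes residual points at positive distance from every ball. That every such boundary point is nonetheless approached by infinitely many shrinking balls is again a consequence of the group action (density of tangency points on each bounding sphere, equivalently minimality of $W$ on its limit set), not of packing maximality alone, so this direction too ultimately leans on the root-theoretic machinery you were trying to avoid.
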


Maxwell's proofs rely on the decomposition of vectors into basis vectors (i.e.
the simple roots), which is not possible in our setting.  Hence many proofs
need to be revised.  Our proofs will make heavy use of projective geometry.

For many Lorentzian root systems of level $2$, the associated Coxeter graph is
not of level $2$ in Maxwell's sense.  So our results imply many new infinite
ball packings generated by inversions.  In Section~\ref{sec:Tumarkin}, we
provide a partial classification of Lorentzian root systems of level $2$.  More
specifically, we try to classify the Coxeter $d$-polytopes of level $2$ with
$d+2$ facets.  For this, we follow the approach of~\cite{kaplinskaja1974,
esselmann1996, tumarkin2004} for enumerating hyperbolic Coxeter $d$-polytopes
with $d+2$ facets, and take advantage of previous enumerations of Coxeter
systems, such as~\cite{lanner1950, chein1969, chen2014}.  Our enumeration makes
contribution to the study of infinite-covolume hyperbolic reflection groups.

\section{Level of Lorentzian root systems}\label{sec:Coxeter}
\subsection{Lorentz space}

A \emph{quadratic space} is a pair $(V,\cB)$ where $V$ is a real vector space
and~$\cB$ is a symmetric bilinear form on $V$.  Two vectors $\bx,\by\in V$ are
said to be \emph{orthogonal} if $\cB(\bx,\by)=0$. For a subspace $U\subseteq
V$, its \emph{orthogonal companion} is the set
\[
	U^\perp=\{\bx\in V\mid\cB(\bx,\by)=0\text{ for all }\by\in U\}.
\]
The orthogonal companion $V^\perp$ of the whole space $V$ is called the
\emph{radical}.  We say that $(V,\cB)$ is \emph{degenerate} if the radical
$V^\perp$ contains non-zero vectors.  In this case, the matrix of
$B=(\cB(\be_i,\be_j))$ is singular for any basis $\{\be_i\}$ of $V$.

The signature of $(V,\cB)$ is the triple $(n_+, n_0, n_-)$ indicating the
number of positive, zero and negative eigenvalues of the matrix $B$.  For
non-degenerate spaces we have $n_0=0$.  A non-degenerate space is an
\emph{Euclidean space} if $n_-=0$ (i.e.\ $B$ is positive definite), or a
\emph{Lorentz space} if $n_-=1$.

The group of linear transformations of $V$ that preserve the bilinear form
$\cB$ is called an \emph{orthogonal group}, and is denoted by $O_\cB(V)$.  The
orthogonal group of a Lorentz space is called a \emph{Lorentz group}.

The set
\[
	Q=\{\bx\in V\mid\cB(\bx,\bx)=0\}
\]
is called the \emph{isotropic cone}, and vectors in $Q$ are said to be
\emph{isotropic}.  In Lorentz space, the isotropic cone is called the
\emph{light cone}, and isotropic vectors are said to be \emph{light-like}.  Two
light-like vectors are orthogonal if and only if one is the scalar multiple of
the other.  A non-isotropic vector $\bx\in V$ is said to be \emph{space-like}
(resp.~\emph{time-like}) if $\cB(\bx,\bx)>0$ (resp.~$<0$).  A subspace
$U\subseteq V$ is said to be \emph{space-like} if its non-zero vectors are all
space-like, \emph{light-like} if it contains some non-zero light-like vector
but no time-like vector, or \emph{time-like} if it contains time-like vectors.  

For a non-isotropic vector $\alpha\in V$, the \emph{reflection} in $\alpha$ is
defined as the map
\[
	s_\alpha(\bx) = \bx - 2 \frac{\cB(\alpha,\bx)}{\cB(\alpha,\alpha)} \alpha,
	\quad \text{for all } \bx \in V.
\]
The orthogonal companion of $\alpha\RR$,
\[
	H_\alpha=\{\bx\in V\mid\cB(\bx,\alpha)=0\},
\]
is the hyperplane fixed by the reflection $s_\alpha$, and is called the
\emph{reflecting hyperplane} of $\alpha$.  One verifies that $\alpha$ is
space-like (resp.\ time-like) if and only if $H_\alpha$ is time-like
(resp.\ space-like).  

\subsection{Based root systems}

The based root system has been the framework of several recent studies of
infinite Coxeter systems, including \cites{howlett1997, krammer2009, dyer2013b,
hohlweg2014} etc., and traces back to Vinberg \cite{vinberg1971}.  

Recall that an abstract \emph{Coxeter system} is a pair $(W,S)$, where $S$ is a
finite set of generators and the \emph{Coxeter group}~$W$ is generated by $S$
with the relations~$(st)^{m_{st}}=e$ where $s,t\in S$, $m_{ss}=1$ and
$m_{st}=m_{ts}\geq 2$ or $=\infty$ if $s\neq t$. The cardinality $n=|S|$ is the
\emph{rank} of the Coxeter system $(W,S)$. For an element $w\in W$, the
\emph{length} of $w$, denoted by ~$\length(w)$, is the smallest natural number
$k$ such that $w=s_1s_2\dots s_k$ for $s_i\in S$.  Readers unfamiliar with
Coxeter groups are invited to consult \cite{bourbaki1968, humphreys1992} for
basics.

Let $(V,\cB)$ be a quadratic space. A \emph{root basis} $\Delta$ in
$(V,\cB)$ is a \emph{finite} set of vectors in $V$ such that
\begin{enumerate}
	\item $\cB(\alpha,\alpha)=1$ for all $\alpha\in\Delta$;

	\item $\cB(\alpha,\beta) \in ]-\infty,-1] \cup
		\{-\cos(\pi/k),\,k\in\mathbb{Z}_{\ge 2}\}$ for all
		$\alpha\ne\beta\in\Delta$;

	\item $\Delta$ is \emph{positively independent}.  That is, a linear
		combination of $\Delta$ with non-negative coefficient only vanishes when
		all the coefficients vanish.
\end{enumerate}
We assume that $\Delta$ spans $V$.  If this is not the case, we replace $V$ by
the subspace $\Span(\Delta)$, and $\cB$ by its restriction on $\Span(\Delta)$.

Following~\cite{howlett1997}, we call $\Delta$ a \emph{free root basis} if it
also forms a basis of $V$.  Following~\cite{krammer2009}, we call a free root
basis $\Delta$ \emph{classical} if $\cB(\alpha, \beta) \ge -1$ for all $\alpha,
\beta \in \Delta$.  The classical root basis is presented in textbooks such
as~\cite{bourbaki1968, humphreys1992}.

Let $S=\{s_\alpha\mid\alpha\in\Delta\}$ be the set of reflections in vectors of
$\Delta$, and $W$ be the reflection subgroup of $O_\cB(V)$ generated by $S$.
Then $(W,S)$ is a \emph{Coxeter system}, where the order of $s_\alpha s_\beta$
is $k$ if $\cB(\alpha,\beta)=-\cos(\pi/k)$, or $\infty$ if
$\cB(\alpha,\beta)\le -1$.  Let $\Phi:=W(\Delta)$ be the orbit of $\Delta$
under the action of $W$, then the pair $(\Delta,\Phi)$ is called a \emph{based
root system in $(V, \cB)$ with associated Coxeter system $(W,S)$}.  Vectors in
$\Delta$ are called \emph{simple roots}, and vectors in $\Phi$ are called
\emph{roots}.  The roots~$\Phi$ are partitioned into \emph{positive roots}
$\Phi^+ = \Cone(\Delta) \cap \Phi$ and \emph{negative roots} $\Phi^-=-\Phi^+$.
The \emph{rank} of $(\Delta,\Phi)$ is the cardinality of $\Delta$.  In the
following, we write~$w(\bx)$ for the action of~$w\in W$ on $V$, and the word
``based'' is often omitted.

A based root system in Euclidean space has a classical root basis, in which
case the associated Coxeter system is of \emph{finite type}, and we say the
root system is \emph{finite}.  If $(V,\cB)$ is degenerate yet $\cB(\bv,\bv)\ge
0$ for all $\bv\in V$, the based root system $(\Delta,\Phi)$ in $(V,\cB)$ also
has a classical root basis; in this case, we say that $(\Delta,\Phi)$ is
\emph{affine} since the associated Coxeter system must be of affine type.  If
$(V,\cB)$ is a Lorentz space, we say that the root system $(\Delta,\Phi)$ is
\emph{Lorentzian}.  Finally, $(\Delta,\Phi)$ is said to be non-degenerate if
$(V,\cB)$ is.

A based root system is \emph{irreducible} if there is no proper partition
$\Delta = \Delta_I \sqcup \Delta_J$ such that $\cB(\alpha,\beta)=0$ for all
$\alpha \in \Delta_I$ and $\beta \in \Delta_J$, in which case the associated
Coxeter group is also irreducible.

\subsection{Geometric representations of Coxeter systems}

Given an abstract Coxeter system $(W,S)$ of rank $n$.  We introduce a matrix
$B$ such that
\[
  B_{st}=
	\begin{cases}
		-\cos(\pi/m_{st}) & \text{if}\quad m_{st}<\infty,\\
		-c_{st} & \text{if}\quad m_{st}=\infty,
	\end{cases}
\]
for $s,t\in S$, where $c_{st}$ are chosen arbitrarily with $c_{st} = c_{ts}
\geq 1$.  Note that if $m_{st}$ is finite for all $s,t\in S$, there is only one
choice for the matrix $B$.  This is the case when, for instance, $(W,S)$ is of
finite or affine type (with the exception of $I_\infty$).  In~\cite{chen2014},
the Coxeter system~$(W,S)$ associated with the matrix~$B$ is referred to as a
\emph{geometric Coxeter system}, and denoted by~$(W,S)_B$.

With the matrix $B$, there is a canonical way to associate $(W,S)$ to a root
system with free root basis.  Let $V$ be a real vector space of dimension $n$
with basis $\{\be_s\}_{s\in S}$ indexed by the elements in $S$.  Then the
matrix~$B$ defines a bilinear form $\cB$ on $V$ by $\cB(\be_s, \be_t) =
\be_s^\intercal B \be_t$ for $s,t\in S$.  The basis $\{\be_s\}_{s\in S}$ form a
free root basis in $(V,\cB)$.  The homomorphism that maps $s\in S$ to the
reflection in $\be_s$ is a faithful \emph{geometric representation} of the
Coxeter group $W$ as a discrete reflection subgroup of the orthogonal
group~$O_\cB(V)$.  This is the representation considered in
Maxwell~\cite{maxwell1982}.

In the present paper, we focus on based root systems in Lorentz space, which is
non-degenerate.  Moreover, we are particularly interested in root basis that is
not linearly independent.  Hence we prefer non-degenerate root systems.  With
the free root basis above, there is a canonical way to obtain a non-degenerate
root system by ``dividing out the radical''~\cite[Section~6.1]{krammer2009}.

If the matrix $B$ is singular with rank $d$, then the dimension of the
\emph{radical} $V^\perp$ is $n-d$.  The bilinear form $\cB$ restricted on the
quotient space $U=V/V^\perp$ is non-degenerate.  Let $\alpha_s$ be the
projection of $\be_s$ onto $U$ for all $s\in S$.  If the root system induced by
$\{\be_s\}_{s\in S}$ is not affine, the vectors $\Delta=\{\alpha_s\mid s\in
S\}$ are positively independent and form a root basis in $(U,\cB)$ such that
$\cB(\alpha_s,\alpha_t)=B_{st}$~\cite{krammer2009}*{Proposition 6.1.2}.  The
homomorphism that maps $s\in S$ to the reflection in~$\alpha_s$ is a faithful
\emph{geometric representation} of the Coxeter group $W$ as a discrete
reflection subgroup of the orthogonal group $O_\cB(U)$.

This process does not work for affine root systems.  We call a root system
\emph{canonical} if it is non-degenerate or affine.  The process described in
the previous paragraph is called \emph{canonicalization}.  Unless otherwise
stated, root systems in this paper are all canonical.

We adopt Vinberg's convention to encode the matrix $B$ into the Coxeter graph.
That is, if $c_{st}> 1$ the edge~$st$ is dotted and labeled by~$-c_{st}$. This
convention is also used by Abramenko--Brown
\cite[Section~10.3.3]{abramenko2008} and Maxwell \cite[Section~1]{maxwell1982}.
The Coxeter graph is connected if and only if the Coxeter system it represents
is irreducible.

Maxwell proposed the following definition:
\begin{definition}[Level of a Coxeter graph]
	The \emph{level of a Coxeter graph} is the smallest integer $l$ such that
	deletion of any $l$ vertices leaves a Coxeter graph of a finite or affine
	root system.
\end{definition}

\subsection{Facials subsets and the level of root systems}

The readers are assumed to be familiar with convex cones.  Otherwise we
recommend~\cite{rockafellar1970} or the appendix of~\cite{dyer2013} for
reference.

Let $(\Delta, \Phi)$ be a canonical root basis in $(V,\cB)$ with associated
Coxeter system $(W,S)$.  Because of the positive independence, the cone
$\cC=\Cone(\Delta)$ is a pointed polyhedral cone.  We call $\cC$ the
\emph{positive cone} since it is also spanned by the positive roots, i.e.\
$\cC=\Cone(\Phi^+)$.  The extreme rays of $\cC$ are spanned by the simple roots
in $\Delta$.

A subset $\Delta'$ of $\Delta$ is said to be \emph{$k$-facial}, $1\le k\le d$,
if $\Cone(\Delta')$ is a face of codimension $k$ of $\cC$.  In this case, we
say that $I = \{ s_\alpha \mid \alpha \in \Delta' \} \subset S$ is a
\emph{$k$-facial} subset of $S$, and use the notation $\Delta_I$ in place of
$\Delta'$.  We also write $W_I=\langle I \rangle$, $V_I = \Span(\Delta_I)$ and
$\Phi_I = W_I(\Delta_I)$.  Then a $k$-facial subset $I$ of $S$ induce a based
root system $(\Delta_I, \Phi_I)$ in $(V_I,\cB)$ with associated Coxeter system
$(W_I, I)$.  We call $(\Delta_I, \Phi_I)$ a $k$-facial root subsystem.  We
agree on the convention that $\Delta$ and $S$ themselves are $0$-facial.  

We now define the central notion of this paper.
\begin{definition}[Level of a root system]\label{def:level}
	Let $(\Delta,\Phi)$ be a root system in $(V,\cB)$ with canonical root basis
	$\Delta$ and associated Coxeter system $(W,S)$.  The \emph{level} of
	$(\Delta,\Phi)$ is the smallest integer $l$ such that every $l$-facial
	subsystem is (after canonicalization if necessary) finite or affine.
\end{definition}

So root systems of finite or affine type are of level $0$.  If a root system is
of level $l$ and every $l$-facial subsystem is of finite type, we say
that it is \emph{strictly} of level $l$.

\begin{remark}
  We only defined the notion of ``facial'' and ``level'' for canonical root
  basis.  However, it is possible that a facial root subsystem is not
  canonical.  This does not happen for Lorentzian root systems, whose facial
  root subsystems are either finite, affine or Lorentzian.  Hence in this
  paper, the canonicalization in the definition of ``level'' is never needed.
  For non-Lorentzian root systems, we propose for the moment to first
  canonicalize a facial root subsystem if necessary.  This is however open for
  future discussions.
\end{remark}

For free root basis, the level of the root system equals the level of the
Coxeter graph.  Otherwise, the two levels are different in general.  We will
show that Definition~\ref{def:level} is more suitable for geometric studies.

Given a root system of level $l$, the bilinear form $\cB$ is positive
semidefinite on $\Span(\Delta_I)$ for every $l$-facial $I \subset S$, and
indefinite for at least one $(l-1)$-facial $I \subset S$.  If the root
system $(\Delta,\Phi)$ is Lorentzian, we can reformulate
Definition~\ref{def:level} in terms of its positive cone $\cC$: The level of
$(\Delta,\Phi)$ is $1+$ the maximum codimension of the time-like faces of
$\cC$.  Here, we use the conventions that a face of codimension $0$ is the
polytope itself.

% In the following, unless otherwise stated, the term level takes the meaning as
% in Definition~\ref{def:level}.

\subsection{Fundamental cone, Tits cone and imaginary cone}

In this part, we assume that the root system is non-degenerate, so that we can
identify the dual space $V^*$ with $V$.  For definitions for degenerate root
systems, see the remark in~\cite{dyer2013}*{\S 1.9}.

The dual cone $\cC^*$ of the positive cone is called the \emph{fundamental
chamber}.  Recall that
\[
	\cC^*=\{\bx \in V \mid \cB(\bx,\by) \ge 0 \text{ for all } \by \in \cC\}.
\]
The fundamental chamber is the fundamental domain for the action of $W$ on $V$.

Since $(V,\cB)$ is non-degenerate, $\cC^*$ is also a pointed polyhedral cone.
The supporting hyperplanes at the facets of $\cC^*$ are the reflecting
hyperplanes of the simple roots.  So the Coxeter group $W$ is isomorphic to the
group generated by reflections in the facets of $\Cone(\Delta^*)$, and the
stabilizer of a face of $\Cone(\Delta^*)$ is generated by reflections in the
facets that contains this face.

The extreme rays of $\cC^*$ correspond to the facets of $\cC$.  For each $1$-facial
subset $I \subset S$, we define the vector $\omega_I$ by
\[
	\cB(\alpha,\omega_I)=\begin{cases}
		=0, & \alpha\in\Delta_I\\
		>0, & \alpha\notin\Delta_I
	\end{cases}
\]
and 
\[
	\min_{\alpha\notin\Delta_I}\cB(\alpha,\omega_I)=1.
\]
So $\omega_I$ is a representative on the extremal ray of $\cC^*$ associated to
the facet $\Cone(\Delta_I)$ of $\cC$.  We write $\Delta^*:=\{ \omega_I \mid I
\text{ is a $1$-facial subset of } S \}$.  If $\Delta$ is a free root basis,
then $\Delta^*$ is the dual basis, i.e.\ the set of \emph{fundamental
weights}.  We use $\Omega$ to denote the orbit $W(\Delta^*)$ under the action
of $W$.

We define the Tits cone as the union of the orbits of $\cC^*$ under the action of
$W$, i.e.
\[
	\cT = \bigcup_{w\in W} w(\cC^*).
\]
It is equal to $\Cone(\Omega)$.  For finite root systems, the Tits cone is the
entire representation space $V$~\cite{abramenko2008}*{\S~2.6.3}.  Otherwise,
for infinite non-affine root systems, $\cT$ is a pointed cone.
See~\cite[Lemma~1.10]{dyer2013} for more properties of Tits cone.

For Lorentzian root systems, the dual of the closure $\overline \cT$ is the
closure of the \emph{imaginary cone} $\cI$, which is equal to the intersection
of the orbits of $\cC$ under the action of $W$~\cite[\S~3.1 and
Theorem~5.1(a,b)]{dyer2013}, i.e.
\[
	\overline \cI=\bigcap_{w\in W} w(\cC).
\]

\subsection{Projective picture and limit roots}\label{ssec:proj}

As observed in~\cite[Section~2.1]{hohlweg2014}, the roots $\Phi$ have no
accumulation point.  To study the asymptotic behavior of roots, we pass to the
projective representation space $\PP V$, i.e.\ the space of $1$-subspaces
of~$V$; see~\cite[Remark~2.2]{hohlweg2014}.  For a non-zero vector $\bx\in V$,
we denote by $\proj\bx\in\PP V$ the $1$-subspace spanned by~$\bx$.  The
geometric representation then induces a \emph{projective representation}
\[
	w\cdot\proj\bx=\proj{w(\bx)},\quad w\in W,\quad\bx\in V.
\]

For a set $X\subset V$, we have the corresponding projective set
\[
	\proj X:=\{\proj\bx\in\PP V\mid\bx\in X\}
\]
In this sense, we have the projective simple roots $\proj\Delta$, projective
roots $\proj\Phi$ and the projective isotropic cone $\proj Q$.  We use
$\Convex(\proj X)$ and $\Affine(\proj X)$ to denote $\proj{\Cone(X)}$ and
$\proj{\Span(X)}$ respectively.  

The \emph{limit roots} are the accumulation points of $\proj\Phi\subset\PP V$.
In~\cite{hohlweg2014}, it was proved that the limit roots lies on $\proj Q$.
For free root basis, it was proved in~\cite{chen2014} that limit roots are also
the accumulation points of $\proj\Omega$.  Limit roots are the projectivization
of the extreme rays of the imaginary cone~\cite{dyer2013, dyer2013b}.

The projective space $\PP V$ can be identified with an affine subspace plus a
\emph{hyperplane at infinity}.  We usually fix a vector $\bv$ and take the
affine subspace
\[
	H^1_\bv=\{\bx \in V \mid \cB(\bv,\bx)=1\}.
\]
Then for a vector $\bx\in V$, we represent $\proj\bx\in\PP V$ by the vector
$\bx/\cB(\bv,\bx)\in H^1_\bv$ if $\cB(\bv,\bx)\ne 0$, or some point at infinity
if $\cB(\bv,\bx)=0$.

For Lorentzian root systems, two choices of $\bv$ turn out to be very
convenient.  One is
\[
	\bt = -\sum_{\alpha \in \Delta} \alpha.
\]
One verifies from the definition of based root system that $\bt$ is time-like.
So the subspace $H_\bt=\bt^\perp$ is space-like and divides the space into two
parts, each containing half of the light cone.  Vectors on the same side as
$\bt$ are said to be \emph{past-directed}; they have negative inner products
with $\bt$.  Those on the other side are said to be \emph{future directed}.  It
then makes sense to call $\bt$ the \emph{direction of past}.  It turns out that
the fundamental chamber is past-directed.  Hence on the affine hyperplane
$H^1_\bt$, the light cone $\proj Q$ appears as a closed surface projectively
equivalent to a sphere, and the fundamental chamber appears as a bounded
polytope.  We call $\proj \cC^*$ the Coxeter polytope.  We can view the
interior of $\proj Q$ as the Klein model for the hyperbolic space.  Then the
study on Lorentzian root systems can be seen as a study of hyperbolic Coxeter
polytopes.

The other is
\[
	\bh = -\sum_{\omega \in \Delta^*} \omega,
\]
One verifies that $\cB(\bh,\alpha)>0$ for all the simple roots
$\alpha\in\Phi^+$, so the affine hyperplane $H_\bh^1$ is transverse for the
positive cone $\cC$.  This is useful for the visualization of the root system,
see~\cite{hohlweg2014}*{\S~5.2}, and also for a technical reason.  An important
technical notion in~\cite{chen2014} is the height $h(\bx)$ for a vector $\bx\in
V$; see~\cite[Theorem 3.2]{chen2014} and~\cite[Theorem 2.7]{hohlweg2014}.  For
non-degenerate root systems, we define $h(\bx)=\cB(\bh,\bx)$.  One verifies
that $h(\bx)$ is a $L_1$-norm on the positive roots $\Phi^+$.  If the root
basis is free, our definition coincides with the definition in~\cite{chen2014},
where $h(\bx)$ is the sum of the coordinates with simple roots as basis.

\section{Extending Maxwell's results}\label{sec:Maxwell}
In this part, we extend one by one the major results of \cite{maxwell1982} to
canonical root systems.  Detailed proofs are given only if there is a
significant difference from Maxwell's proof.  For the statement of the results,
we mimic intentionally the formulations in \cite{maxwell1982}.  Remember that
the level of a root system is in general different from the level of its
Coxeter graph.

First of all, the following two results are proved in \cite{maxwell1982} for
free root basis, and are extended to canonical root basis in
\cite{howlett1997}; see also \cite{dyer2013}*{\S 9.4}.

\begin{proposition}[\cite{howlett1997}*{Proposition 3.4}, extending \cite{maxwell1982}*{Proposition 1.2}]
	For every vector~$\bx$ in the dual of the Tits cone, $\cB(\bx,\bx)\le 0$.
\end{proposition}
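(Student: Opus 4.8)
The plan is to first replace ``the dual of the Tits cone'' by the concrete description $\cT^{*}=\bigcap_{w\in W}w(\cC)$. Since $\cT=\Cone(\Omega)=\bigcup_{w\in W}w(\cC^{*})$ and each $w\in W$ acts orthogonally, the dual of a union is the intersection of the duals, and $(w(\cC^{*}))^{*}=w(\cC^{**})$; as $\cC=\Cone(\Delta)$ is finitely generated, hence a closed convex cone, $\cC^{**}=\cC$, so
\[
	\cT^{*}=\bigcap_{w\in W}\bigl(w(\cC^{*})\bigr)^{*}=\bigcap_{w\in W}w\bigl(\cC^{**}\bigr)=\bigcap_{w\in W}w(\cC).
\]
(For Lorentzian root systems this is exactly the imaginary cone $\overline\cI$ recalled above.) Thus $\bx\in\cT^{*}$ means precisely that $w(\bx)\in\cC$ for every $w\in W$; in particular the whole orbit $W(\bx)$ lies in $\cC$, and $\cB(w(\bx),w(\bx))=\cB(\bx,\bx)$ for all $w$ because $W$ preserves $\cB$. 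We may assume $\bx\neq 0$.

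Next I would exploit the height function $h(\by)=\cB(\bh,\by)$. Since $\cB(\bh,\alpha)>0$ for every simple root $\alpha$ and the simple roots span the extreme rays of the pointed cone $\cC$, the vector $\bh$ lies in the interior of $\cC^{*}$; hence $h$ is strictly positive on $\cC\setminus\{0\}$ and every slice $\{\by\in\cC:h(\by)\le N\}$ is compact. Because $W(\bx)\subseteq\cC$, the number $m=\inf_{w\in W}h(w(\bx))$ is nonnegative and finite. Choosing $w_{n}$ with $h(w_{n}(\bx))\to m$, the points $\by_{n}=w_{n}(\bx)$ eventually lie in the compact slice $\{h\le m+1\}$, so after passing to a subsequence $\by_{n}\to\by\in\cC$ with $h(\by)=m$ and $\cB(\by,\by)=\cB(\bx,\bx)$.

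It then remains to show $\cB(\alpha,\by)\le 0$ for every simple root $\alpha$. For each $n$ the reflected vector $s_{\alpha}(\by_{n})=s_{\alpha}w_{n}(\bx)$ again lies in the orbit, so $h(s_{\alpha}(\by_{n}))\ge m$; letting $n\to\infty$ and using continuity gives $h(s_{\alpha}(\by))\ge m=h(\by)$. Since $h(s_{\alpha}(\by))=h(\by)-2\cB(\alpha,\by)\,\cB(\bh,\alpha)$ and $\cB(\bh,\alpha)>0$, this forces $\cB(\alpha,\by)\le 0$. Writing $\by=\sum_{\alpha\in\Delta}c_{\alpha}\alpha$ with $c_{\alpha}\ge 0$ (possible as $\by\in\cC$), I conclude
\[
	\cB(\bx,\bx)=\cB(\by,\by)=\sum_{\alpha\in\Delta}c_{\alpha}\,\cB(\alpha,\by)\le 0 .
\]

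I expect the only delicate point to be the compactness step: one must check that $\bh$ is genuinely interior to $\cC^{*}$, so that the height-slices of $\cC$ are compact and a minimizing sequence cannot escape to infinity. The pleasant feature of passing to the limit $\by$ rather than to a genuine minimizer is that I never need the orbit $W(\bx)$ to be discrete nor the infimum $m$ to be attained \emph{within} the orbit; the defining inequality of the infimum survives the limit, which is all the final cone expansion needs. (When the root system is merely affine, $\cB$ is positive semidefinite and the dual of the Tits cone consists of radical, hence isotropic, vectors, so the statement is immediate; the argument above is the substantive non-degenerate case, where the height function is available.)
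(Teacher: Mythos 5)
Your proof is correct, and it is worth noting that the paper itself gives no proof of this proposition at all: it simply cites Howlett's Proposition~3.4 (and Maxwell's Proposition~1.2 for the free-basis case). What you have written is essentially that classical argument, correctly adapted to the based-root-system setting in exactly the way the paper advocates elsewhere: where Maxwell minimizes the coordinate-sum height over the orbit (which requires $\Delta$ to be a basis), you minimize the substitute height $h=\cB(\bh,\cdot)$ over the closure of the orbit inside the closed cone $\cC$, which removes the need for the infimum to be attained in the orbit itself. The duality computation $\cT^*=\bigcap_{w}w(\cC)$ is sound (the pairing via $\cB$ is perfect in the non-degenerate case, and $\cC=\Cone(\Delta)$ is finitely generated hence closed, so $\cC^{**}=\cC$), and the ``delicate point'' you flag does hold: each simple root spans an extreme ray of the full-dimensional pointed cone $\cC$ and therefore lies outside at least one facet $\Cone(\Delta_I)$, so $\sum_I\cB(\alpha,\omega_I)>0$ for every $\alpha\in\Delta$, which makes $h$ strictly of one sign on $\cC\setminus\{0\}$ and the slices $\{h\le N\}\cap\cC$ compact. (Two cosmetic quibbles: with the paper's definitions of $\omega_I$ and $\bh=-\sum\omega$ one actually gets $\cB(\bh,\alpha)<0$ rather than $>0$, an apparent sign slip in the paper that is harmless here since only strict one-signedness is used; and your closing aside about the affine case is loose, but immaterial, since the subsection in which the proposition sits explicitly assumes the root system non-degenerate.)
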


\begin{corollary}[\cite{howlett1997}*{Proposition 3.7}, extending \cite{maxwell1982}*{Corollary 1.3}] 
	The Tits cone of a Lorentzian root system contains one component of the
	light cone $Q\setminus\{0\}$.
\end{corollary}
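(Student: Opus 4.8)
The plan is to run the argument through the duality between the Tits cone and the imaginary cone, together with the preceding Proposition, which says that the imaginary cone is non-spacelike. Recall that for a Lorentzian root system the dual of $\overline{\cT}$ is $\overline{\cI}$, and since $\overline{\cT}$ is a closed convex cone, dualizing once more gives $\overline{\cT}=\overline{\cI}^{\,*}$. By the Proposition every vector of $\overline{\cI}=\cT^{*}$ satisfies $\cB(\bx,\bx)\le 0$, so $\overline{\cI}$ lies in the non-spacelike region $N=\{\bx\in V\mid\cB(\bx,\bx)\le 0\}$. In a Lorentz space $N\setminus\{0\}$ is the disjoint union of two closed convex nappes $K^{+},K^{-}$, bounded respectively by the two components of the light cone $Q\setminus\{0\}$. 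Since the root system is Lorentzian the associated Coxeter group is infinite and non-affine, hence $\overline{\cI}\neq\{0\}$. A convex cone contained in $K^{+}\cup K^{-}$ and distinct from $\{0\}$ must lie in a single nappe, because any segment joining a non-zero point of $K^{+}$ to a non-zero point of $K^{-}$ passes through spacelike vectors unless it passes through the origin; thus $\overline{\cI}\subseteq K^{+}$, say.

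First I would record the elementary Lorentzian fact that the polar of one nappe is the opposite nappe, $(K^{+})^{*}=K^{-}$. This is exactly the reverse Cauchy--Schwarz inequality in Lorentz space: two non-spacelike vectors have non-positive inner product when they lie in the same nappe and non-negative inner product when they lie in opposite nappes, with equality only for parallel light-like vectors. Because dualization reverses inclusions, $\overline{\cI}\subseteq K^{+}$ yields
\[
	\overline{\cT}=\overline{\cI}^{\,*}\supseteq (K^{+})^{*}=K^{-}.
\]
In particular $\overline{\cT}$ contains the entire boundary component $Q^{-}=\partial K^{-}\setminus\{0\}$, one of the two components of $Q\setminus\{0\}$. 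Since the fundamental chamber $\cC^{*}\subseteq\cT$ is past-directed and $\cT$ is pointed, $\overline{\cT}$ contains only one nappe; one checks that it is the past nappe, so $K^{+}$ is the future one and $Q^{-}$ is the past component asserted by the statement.

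It remains to upgrade this from $\overline{\cT}$ to the Tits cone $\cT$ itself, and this is where I expect the only genuine difficulty. The open nappe $\Interior(K^{-})$ lies in $\Interior(\overline{\cT})=\Interior(\cT)\subseteq\cT$, because $\cT$ is convex with non-empty interior. Moreover, for a light-like $\bx\in Q^{-}$ whose antipode $\proj{-\bx}$ is \emph{not} a limit root, the inequality $\cB(\bx,\by)\ge 0$ for $\by\in\overline{\cI}$ is in fact strict for every non-zero $\by$ (no vector of $\overline{\cI}$ is parallel to $\bx$), so again $\bx\in\Interior(\cT)\subseteq\cT$. The delicate case is precisely the light-like directions antipodal to limit roots, where the reverse Cauchy--Schwarz inequality degenerates to equality and the argument above only places $\bx$ in $\overline{\cT}$.

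Showing that these exceptional rays still lie in the \emph{open} Tits cone $\cT$, rather than merely in its closure, is the main obstacle. I would resolve it by a separate argument internal to $\cT$: either a controlled limiting argument approaching such a ray from the interior $\Interior(K^{-})\subseteq\cT$, or the chamber description $\cT=\bigcup_{w}w(\cC^{*})$ together with the fact that the positive roots---and hence the reflecting walls---accumulate only on the light cone, so that a light-like direction of $Q^{-}$ is met by a chamber. This last point, reconciling the projective-geometric and combinatorial pictures, is the step I anticipate needing the most care.
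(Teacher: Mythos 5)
The paper offers no argument for this corollary --- it is imported verbatim from Howlett et al.\ --- so there is no internal proof to compare against; I can only assess your argument on its own terms. Its first two thirds are correct and form a clean, genuinely structural route: $\overline{\cT}^{\,*}=\overline{\cI}$, the preceding Proposition places $\overline{\cI}$ in the non-space-like region, pointedness (it sits inside the pointed cone $\cC$) forces it into a single nappe $K^{+}$, and the bipolar theorem together with the elementary fact $(K^{+})^{*}=K^{-}$ yields $\overline{\cT}\supseteq K^{-}$. This proves that the \emph{closure} of the Tits cone contains one full component of $Q\setminus\{0\}$, which is the actual content of Maxwell's Corollary~1.3 (stated there for $\overline{U}$, not $U$).

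The difficulty you isolate at the end, however, is not merely delicate --- it is insurmountable, because the statement for the open Tits cone $\cT=\bigcup_{w}w(\cC^{*})$ is false in general. Take the $(2,3,7)$ triangle group: $\proj{\cC^{*}}$ is a compact hyperbolic triangle, every chamber $w\cdot\proj{\cC^{*}}$ is a compact subset of the open disc, and their union is exactly the open disc; hence $\cT\setminus\{0\}$ consists entirely of time-like vectors and contains no light-like ray at all. Your proposed fix via the chamber description (``a light-like direction of $Q^{-}$ is met by a chamber'') is precisely what fails here, a limiting argument from $\Interior(K^{-})$ cannot help because the boundary points genuinely lie outside $\cT$, and your escape hatch for directions that are not limit roots is vacuous in this example since the limit set is all of $\proj Q$. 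The corollary must therefore be read, as in Maxwell and the cited source, with $\overline{\cT}$ in place of $\cT$; under that reading your duality argument is a complete proof and a legitimately different one from the direct chamber-by-chamber computations in the literature.
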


We will need the following lemma:

\begin{lemma}\label{lem:radon}
	Let $\cP$ be a polytope of dimension $d\ge 3$, and $x$ be a point in the
	exterior of $\cP$.  Then there is a line $L$ that passes through $x$ and two
	points $u \in F$ and $v \in G$, where $F$ and $G$ are two disjoint faces of
	$\cP$.  Moreover, the following are equivalent:
	\begin{enumerate}[label=\emph{(\roman*)}]
		\item For any $L$ with the property, either $u$ or $v$ is in the interior
			of a facet.

		\item $x$ is not on the affine hull of any facet, and is either beyond all
			but one facet, or beneath all but one facet.
	\end{enumerate}
\end{lemma}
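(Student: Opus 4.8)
The plan is to control everything through the beyond/beneath data of $x$. For each facet $F$ fix an affine functional $h_F$ with $h_F\le 0$ on $\cP$, vanishing exactly on the supporting hyperplane of $F$, so that $x$ is beyond $F$ iff $h_F(x)>0$, beneath $F$ iff $h_F(x)<0$, and on the affine hull of $F$ iff $h_F(x)=0$; write $P,N,Z$ for the corresponding sets of facets. Since $x$ is exterior, $|P|\ge 1$, and since the outer facet normals positively span $V$ the set $P$ can never be \emph{all} facets, so when $Z=\emptyset$ we have $|N|\ge 1$. The first observation is that, \emph{when $Z=\emptyset$}, any line $L$ through $x$ meeting two disjoint faces must cross $\Interior(\cP)$: it cannot lie in a supporting hyperplane, so $L\cap\cP$ is a genuine segment $[u,v]$ with relative interior in $\Interior(\cP)$. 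Orient $L$ so that $u$ is the endpoint nearer to $x$. The key local fact is then that \emph{every facet tight at $u$ lies in $P$ and every facet tight at $v$ lies in $N$}: if $F$ is tight at $u$ then $h_F(u)=0$ while $h_F<0$ just past $u$, and as $h_F$ is affine along $L$ and $x$ lies on the far side of $u$, we get $h_F(x)>0$; the claim for $v$ is symmetric. Hence $u$ is in the relative interior of a facet iff exactly one facet is tight there, and $u$ lies on a face of codimension $\ge 2$ iff at least two facets of $P$ meet at $u$ (and symmetrically for $v$ and $N$).

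With this, the implication $(ii)\Rightarrow(i)$ is immediate. Assuming $(ii)$ we have $Z=\emptyset$, so the local fact applies to every admissible $L$, and moreover $|P|=1$ or $|N|=1$. If $|P|=1$ the single facet of $P$ is the only facet that can be tight at the near endpoint, so $u$ always lies in the interior of a facet; if $|N|=1$ the same argument places $v$ in a facet interior. Either way $(i)$ holds.

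For the converse I would prove the contrapositive $\neg(ii)\Rightarrow\neg(i)$, i.e.\ exhibit a line through $x$ meeting two disjoint faces with \emph{neither} endpoint in a facet interior, splitting into two cases. If $Z\ne\emptyset$, choose $F_0$ with $x$ on its affine hull $H_0$; then $x$ is exterior to the $(d-1)$-polytope $F_0$ inside $H_0$, and since $\dim F_0=d-1\ge 2$ there is a line in $H_0$ through $x$ meeting two disjoint proper faces of $F_0$ (for instance a vertex and a non-incident face). These are faces of $\cP$ of dimension $\le d-2$, hence not facets, so $(i)$ fails. If instead $Z=\emptyset$, then $\neg(ii)$ together with $|P|,|N|\ge 1$ forces $|P|\ge 2$ and $|N|\ge 2$, and I must build an interior-crossing line whose near endpoint lies on two facets of $P$ and whose far endpoint lies on two facets of $N$; by the local fact both endpoints then lie on faces of codimension $\ge 2$, and one checks these two faces are disjoint, giving $\neg(i)$.

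The construction in this last case is the main obstacle, and it is where both $d\ge 3$ and convexity are indispensable. I would pass to the central projection from $x$: the set of directions that meet $\cP$ is a convex $(d-1)$-ball $\Sigma$ (the \emph{shadow}), and over each interior direction the ray meets $\cP$ in a segment with a near endpoint on the visible boundary $\bigcup_{F\in P}F$ and a far endpoint on the invisible boundary $\bigcup_{F\in N}F$. The visible facets subdivide $\Sigma$, as do the invisible facets, and since $|P|,|N|\ge 2$ each subdivision has at least two tiles, hence a nonempty interior wall complex of dimension $d-2$. A direction realizing the desired line is exactly a point of $\Interior(\Sigma)$ lying on \emph{both} wall complexes, so the crux is to prove that two such $(d-2)$-dimensional separating complexes must intersect. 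Pure homology does not suffice (a single separating wall is null-homologous rel $\partial\Sigma$, and parallel walls in a disc need not cross), so the argument must use convexity: I expect to show, by a Radon/linking argument on the silhouette sphere $\partial\Sigma$, that the boundary traces of the two wall complexes necessarily interleave and therefore cross in $\Interior(\Sigma)$, with $d\ge 3$ guaranteeing that the walls are positive-dimensional (the case $d=2$, where the walls are $0$-dimensional and can be avoided, is exactly what the hypothesis $d\ge 3$ excludes). Finally, the opening existence assertion is the easy part: a generic line through $x$ and an interior point of $\cP$ meets $\partial\cP$ in two points lying in relative interiors of faces, and by routing the line through a suitable vertex (or perturbing) one arranges those faces to be disjoint.
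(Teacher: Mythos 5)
Your overall architecture matches the paper's (contrapositive for the hard direction, case split on whether $x$ lies on a facet hyperplane, and a careful proof of (ii)$\Rightarrow$(i) via the ``near endpoint sees only beyond-facets'' observation, which is a nice expansion of what the paper dismisses as obvious). But the proposal has a genuine gap exactly where the lemma is hard: the case $Z=\emptyset$, $|P|\ge 2$, $|N|\ge 2$. There you reduce everything to the claim that the $(d-2)$-dimensional wall complex of the visible subdivision of the shadow $\Sigma$ and that of the invisible subdivision must meet in $\Interior(\Sigma)$, you correctly observe that topology alone cannot give this (parallel chords in a disc), and then you write ``I expect to show\dots that the boundary traces\dots interleave and therefore cross.'' That interleaving claim is never established, and it is not evident: the walls of either subdivision need not even reach $\partial\Sigma$ in a controlled way, and nothing in the proposal rules out one wall complex being entirely enclosed by a tile of the other. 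You also defer two further necessary checks with ``one checks these two faces are disjoint'': this is a real condition (e.g.\ in an octahedron a line can cross the interior while its endpoints lie on two facets sharing a vertex), and without it the constructed line does not even satisfy the hypothesis of (i). The paper closes this case differently: it invokes the Bajm\'oczy--B\'ar\'any generalization of Radon's theorem to produce disjoint faces $F$ (visible, of dimension $<d-1$) and $G$ (forced to be an invisible facet) with $\pi(F)\subset\Interior\pi(G)$, and then runs a connectedness argument (valid because $d\ge 3$) on the union of visible faces of dimension $<d-1$ disjoint from $G$ to drag $\pi(F)$ out to $\partial\pi(G)$, producing the violating line together with the required disjointness.

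A second, smaller gap: you call the opening existence assertion ``the easy part'' and propose to get it by perturbing a generic line or routing it through a vertex. For a simplex no two facets are disjoint, so a generic line through $x$ and the interior never works, and the lines hitting the $(d-2)$-skeleton form a measure-zero set that perturbation cannot reliably land in; routing through a vertex need not produce a line meeting the interior. The existence statement is precisely the Bajm\'oczy--B\'ar\'any theorem (two disjoint faces with intersecting projections from $x$), which the paper cites rather than proves; your sketch does not replace it.
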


Here, we say that $x$ is beyond (resp.\ beneath) a facet $F$ of $\cP$ if $x$ is
on the opposite (resp.\ same) side of $\Affine F$ as the interior of $\cP$;
see~\cite{grunbaum2003}.

\begin{proof}

	Let $H$ be any hyperplane that separates $x$ and $\cP$.  For any point $w$ on
	the boundary of $\cP$, let $\pi(w)$ be the projection of $w$ on $H$ from $x$,
	i.e.\ $\pi(w)$ is the intersection point of $H$ with the segment $[x,w]$.  So
	$\pi(\cP)$ models the polytope $\cP$ seen from $x$; see
	Figure~\ref{fig:project} for an example.  If the segment $[x,w]$ is disjoint
	from the interior of $\cP$, we say that $w$ is \emph{visible} (from $x$).  In
	Figure~\ref{fig:project}, visible edges are solid while invisible edges are
	dashed.
	
	\begin{figure}[htb]
		\centering
		\includegraphics[width=.4\textwidth]{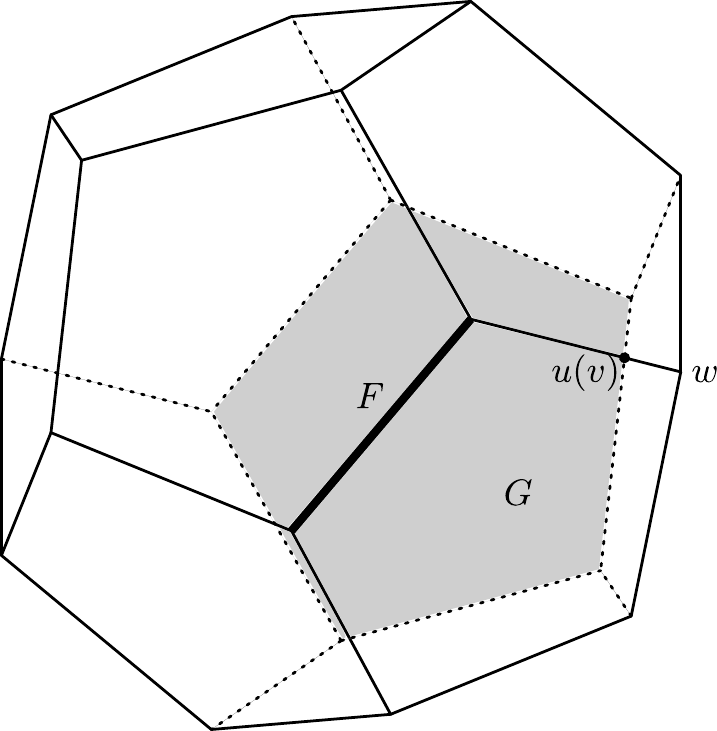}
		\caption{\label{fig:project}}
	\end{figure}

	The existence of $L$ with the given property follows from a generalization of Radon's
	theorem~\cite{bajmoczy1980}, which guarantees two disjoint faces, $F$ and
	$G$, of $\cP$ such that $\pi(F) \cap \pi(G) \ne \emptyset$.  Then $L$ is
	given by any point in the intersection.	For the equivalence, we only prove
	that (i) implies (ii) by contraposition.  The other direction is obvious.
	
	If $x \in \Affine F$ for some facet $F$ of $\cP$, we apply the generalized
	Radon's theorem to $F$, and conclude that there are two disjoint faces of $F$
	whose images under $\pi$ intersect. Any point in the intersection gives a
	line violating (i).

  Assume that $x$ is beneath at least two facets and beyond at least two other
  facets, i.e.\ there are at least two visible facets and two invisible facets.
  Let $F$ and $G$ be two disjoint faces predicted by the generalized Radon's
  theorem, where $F$ is visible while $G$ is not.  By going to the boundary if
  necessary, we may assume that one of them, say $F$, is of dimension $<d-1$.
  If $G$ is also of dimension $<d-1$, any point in the intersection gives a
  line violating (i), so $G$ must be a facet.  We further assume $\pi(F)
  \subset \Interior \pi(G)$, otherwise $\pi(F)$ intersects the boundary of
  $\pi(G)$ and any point in the intersection gives a line violating (i).  In
  Figure~\ref{fig:project}, $F$ is the thick edge, and $G$ is the gray face.

  Consider the visible faces of dimension $<d-1$ that are disjoint from $G$.
  In Figure~\ref{fig:project}, they happen to be the solid edges and their
  vertices.  The projection of their union is a connected set (because $d \ge
  3$) that contains $\pi(F) \subset \Interior\pi(G)$.  Since $G$ is not the
  only invisible facet, there is a visible vertex $w$ on the boundary of
  $\pi(\cP)$ such that $\pi(w) \not\in \pi(G)$.  So the projection of the union
  intersects the boundary of $\pi(G)$.  Any point in the intersection gives a
  line violating (i).

  To conclude, whenever $x$ does not satisfy (ii), we are able to find a line
  violating (i), which finishes the proof.
\end{proof}

\subsection{Lorentzian root systems of level~$1$ or $2$}

Two vectors $\bx,\by$ in $(V,\cB)$ are said to be \emph{disjoint} if
$\cB(\bx,\by) \le 0$ and $\cB$ is \emph{not} positive definite on the subspace
$\Span\{\bx,\by\}$.  

The proofs of the following results are apparently very different from
Maxwell's argument~\cite{maxwell1982}.  Indeed, while Maxwell's proofs make
heavy use of basis, our proofs rely primarily on projective geometry.  However,
the basic idea is the same.  In the case of free root basis, our proofs are
just geometric interpretations of Maxwell's proofs.

\begin{proposition}[extending \cite{maxwell1982}*{Proposition 1.4}]\label{prop:level1}
	If $(\Delta, \Phi)$ is level~$1$, then it is Lorentzian, and vectors in
	$\Delta^*$ are pairwise disjoint while none are space-like.
\end{proposition}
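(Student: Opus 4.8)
The plan is to handle the three assertions in turn: first pin down the signature to get ``Lorentzian,'' then read off the nature of each dual vector $\omega_I$ from the position of its facet-span relative to the light cone, and finally extract disjointness from the geometry of the light cone. First I would record what level~$1$ gives us. Since a level-$1$ system is by definition not affine, canonicity forces $(V,\cB)$ to be non-degenerate, so $n_0=0$. By the reformulation of level stated just after Definition~\ref{def:level}, $\cB$ is indefinite on the only $0$-facial span $\Span(\Delta)=V$, so $n_-\ge 1$, while $\cB$ is positive semidefinite on $V_I=\Span(\Delta_I)$ for every $1$-facial $I$. Each such $V_I$ is the span of a codimension-$1$ face of the full-dimensional pointed cone $\cC$, hence a hyperplane. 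If $n_-\ge 2$, any negative-definite subspace of dimension $n_-$ meets every hyperplane in a subspace of dimension $\ge n_--1\ge 1$, producing a negative vector inside some $V_I$ and contradicting positive semidefiniteness. This forces $n_-=1$, so the system is Lorentzian.

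Next I would determine each $\omega_I$. As $V$ is non-degenerate of dimension $d$ and $V_I$ is a hyperplane, $V_I^\perp$ is a line, spanned by $\omega_I$ (since $\omega_I\perp\Delta_I$ and $\omega_I\ne 0$). If the facet subsystem is finite, then $\cB|_{V_I}$ is positive definite, so $V_I$ is a maximal space-like subspace and its orthogonal line $\RR\omega_I$ is time-like. If the facet subsystem is affine, then $\cB|_{V_I}$ is degenerate positive semidefinite, so its radical $V_I\cap V_I^\perp$ is a nonzero line of light-like vectors; as $V_I^\perp$ is already one-dimensional, this radical equals $V_I^\perp=\RR\omega_I$, making $\omega_I$ light-like. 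Either way $\cB(\omega_I,\omega_I)\le 0$, so none of the $\omega_I$ are space-like.

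For pairwise disjointness, fix distinct $1$-facial $I,J$; the vectors $\omega_I,\omega_J$ are distinct extreme rays of $\cC^*$ and hence linearly independent. Since $\omega_I\ne 0$ satisfies $\cB(\omega_I,\omega_I)\le 0$, the form $\cB$ is not positive definite on $\Span\{\omega_I,\omega_J\}$, which is one half of the disjointness condition and essentially free given the previous step. It remains to prove $\cB(\omega_I,\omega_J)\le 0$. Here I would invoke that the fundamental chamber $\cC^*$ is past-directed: every $\omega_I$ satisfies $\cB(\bt,\omega_I)<0$, the inequality being strict because $\cB(\bt,\omega_I)=0$ would place the non-space-like $\omega_I$ inside the space-like hyperplane $\bt^\perp$. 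Consequently all the $\omega_I$ lie on the same nappe of the light cone as $\bt$.

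The hard part is this final sign. The plan is to apply the reverse Cauchy--Schwarz inequality for Lorentz space: two non-space-like vectors lying on the same nappe have non-positive bilinear product, giving $\cB(\omega_I,\omega_J)\le 0$ directly. The main obstacle is therefore not any of the linear-algebra reductions but the clean identification ``non-space-like and past-directed implies same nappe,'' together with the reverse Cauchy--Schwarz step; everything else reduces to tracking the one-dimensionality of $V_I^\perp$ against the finite-versus-affine dichotomy. Combining the two halves yields that the $\omega_I$ are pairwise disjoint, completing the proof.
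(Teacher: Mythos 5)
Your proof is correct, but it reaches the conclusion by a genuinely different route from the paper's, most notably for the disjointness. For Lorentzian-ness both arguments exhibit a time-like vector inside some facet span: the paper intersects the plane $\Span\{\bu,\bv\}$ (with $\bu$ time-like, $\bv$ isotropic and orthogonal to $\bu$) with \emph{all} the facet hyperplanes and notes that at most the line $\RR\bv$ can fail to be time-like, whereas you intersect a negative-definite subspace of dimension $n_-\ge 2$ with a single facet hyperplane; this is essentially the same dimension count, and your version is slightly more economical. Your identification of $\omega_I$ as time-like or light-like according to the finite/affine dichotomy matches the paper's one-line argument ($\Span\Delta_I$ is not time-like, so its orthogonal companion is not space-like). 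The real divergence is the sign $\cB(\omega_I,\omega_J)\le 0$: the paper first proves the auxiliary claim that every non-space-like vector projects into $\proj\cC$, concludes $\omega_I\in\Cone(\Delta)$ (and, when $\omega_I$ is light-like, that it is a non-positive combination of $\Delta_I$), and then chases signs of coefficients against $\cB(\alpha_t,\omega_J)\ge 0$; you instead place all the $\omega_I$ on the past nappe and invoke the reverse Cauchy--Schwarz inequality for causal vectors, which is standard and valid with the paper's sign convention. Note that the past-directedness you need is a one-line computation, $\cB(\bt,\omega_I)=-\sum_{\alpha\notin\Delta_I}\cB(\alpha,\omega_I)<0$, so the detour through $\bt^\perp$ for strictness is unnecessary. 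What the paper's approach buys is precisely that auxiliary claim $\proj\bx\in\proj\cC$, which is reused verbatim in the proof of Proposition~\ref{prop:level2} and underlies the remark that the Tits cone of a level-$1$ system is the set of non-space-like vectors; your approach buys a shorter, coordinate-free proof of this particular proposition but would still need that claim when moving on to level $2$.
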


\begin{proof}
	Let $(\Delta,\Phi)$ be a canonical root system of level $1$ in $(V,\cB)$ with
	associated Coxeter system $(W,S)$.
	
	For a vector~$\bx$ such that $\cB(\bx,\bx)\le 0$, we claim that $\proj\bx \in
	\proj\cC$.  Assume without loss of generality that $h(\bx)\ge 0$.  If the
	claim is not true, we can find two vectors $\bx_+ \in \Cone(\Delta_I)$ and
	$\bx_- \in \Cone(\Delta_J)$ such that $\bx = \bx_+ - \bx_-$ for two disjoint
	facial subset $I, J \subset S$; see Figure~\ref{fig:xLvl1} (left) for the
	projective picture.  Since $\cB(\bx,\bx) = \cB(\bx_+,\bx_+) +
	\cB(\bx_-,\bx_-) - 2\cB(\bx_+,\bx_-)\le 0$, and $\cB(\bx_+,\bx_-)<0$ because
	$I$ and $J$ are disjoint, we have either $\cB(\bx_+,\bx_+)<0$ or
	$\cB(\bx_-,\bx_-)<0$, both contradict the fact that $(\Delta,\Phi)$ is of
	level $1$.  Our claim is then proved.  If $\cB(\bx,\bx)<0$, since~$\cB$ is
	positive semidefinite on the facets, $\bx$ must be in the interior of $\cC$.
	If $\cB(\bx,\bx)=0$, it is possible that $\bx$ is in the interior of a facet
	of $\cC$.

	\begin{figure}[htb]
		\centering
		\includegraphics[width=.6\textwidth]{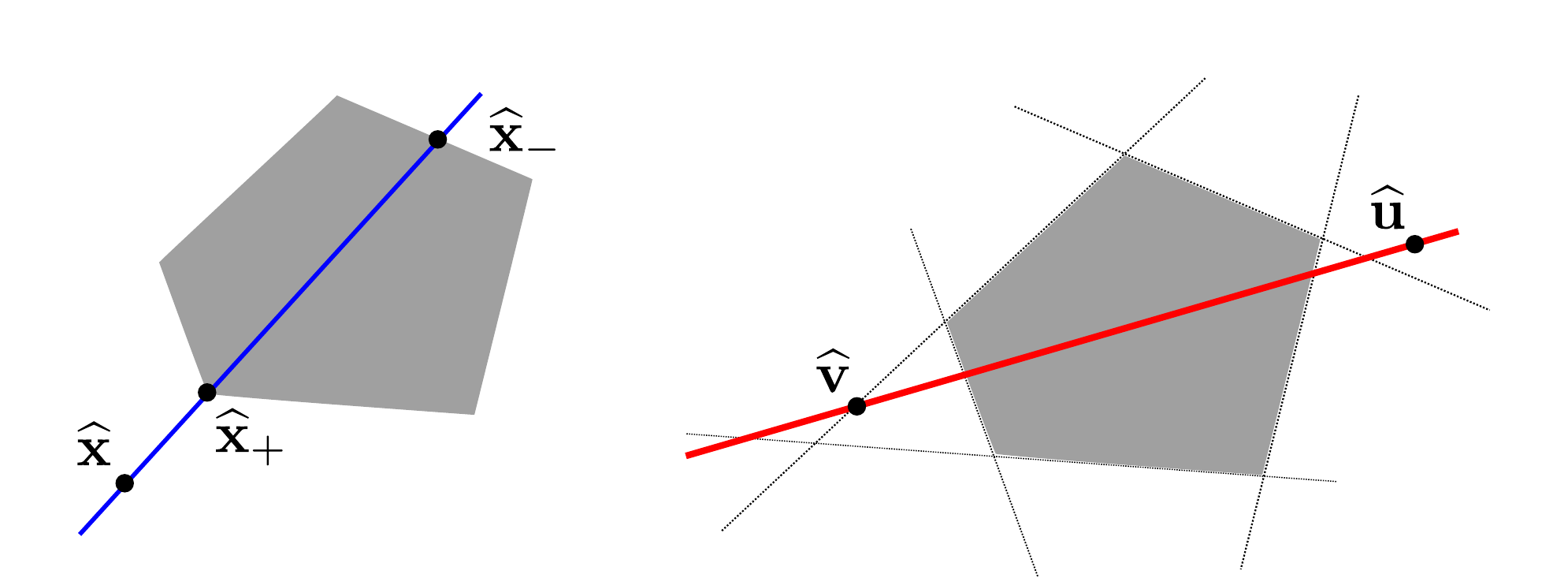}
		\caption{}\label{fig:xLvl1}
	\end{figure}
	
	Now assume that the representation space $V$ is not a Lorentz space.  Then,
	as noticed by Maxwell~\cite{maxwell1982}, there is a pair of
	\emph{orthogonal} vectors $\bu$ and $\bv$ such that $\cB(\bu,\bu)<0$ and
	$\cB(\bv,\bv)=0$.  For any linear combination $\bx=\lambda\bu+\mu\bv$ we have
	$\cB(\bx,\bx)<0$ as long as $\lambda\ne 0$.  The subspace $\Span\{\bu,\bv\}$
	intersect the hyperplanes $\Span\Delta_I$ at a ray $\RR_+ \bx_I$ for every
	$1$-facial subset $I \subset S$; see Figure~\ref{fig:xLvl1} (right) for the
	projective picture (modeled on $H^1_\bt$).  This means that each of these
	supporting hyperplanes contains a time-like ray $\RR_+ \bx_I$, with at most
	one exception (namely $\RR_+ \bv$).  This contradicts the fact that
	$(\Delta,\Phi)$ is of level $1$.  We then proved that $(\Delta,\Phi)$ is
	Lorentzian.

	Let $I$ be any $1$-facial subset of $S$. Since $(\Delta,\Phi)$ is of level~$1$,
	the subspace $\Span\Delta_I$ is not time-like, so its orthogonal companion
	$\omega_I\RR$ is not space-like.  This proves that no vector in $\Delta^*$ is
	space-like.

	It is clear that any two vectors in $\Delta^*$ span a Lorentz space.  For the
	disjointness, we only needs to prove that $\cB(\omega_I,\omega_J)\le 0$ for
	any two $1$-facial subsets $I\ne J\subset S$.  Since $\omega_I$ is not
	space-like, we have seen that $\omega_I \in \cC$, so $\cB(\omega_I,\omega_J)$
	has the same sign (possibly $0$) for all $1$-facial $J\subset S$, which is
	$\le 0$ when $\omega_I$ is time-like (take the sign of $\cB(\omega_I,
	\omega_I)$).  If $\omega_I$ is light-like, notice that $\omega_I \in
	\Cone\Delta_I$, i.e.\ $\omega_I$ can be written as a linear combination of
	$\Delta_I$ with coefficients of the same sign (possibly $0$).  For any
	$s\notin I$, we have $\cB(\omega_I, \alpha_s)>0$ and $\cB(\alpha_t,
	\alpha_s)\le 0$ for any $t \in I$, so $\omega_I$ must be a negative
	combination of $\Delta_I$.  We then conclude that $\cB(\omega_I,\omega_J) \le
	0$ since $\cB(\alpha_t,\omega_J)$ are all $\ge 0$.
\end{proof}

As a consequence, the Tits cone of a level-$1$ root system equals the set
of non-space-like vectors; see also~\cite[Proposition~9.4]{dyer2013}.

\begin{proposition}[extending \cite{maxwell1982}*{Proposition 1.6}]\label{prop:level2}
	If $(\Delta, \Phi)$ is of level~$2$, then it is Lorentzian, and	vectors in
	$\Delta^*$ are pairwise disjoint.  A vector $\omega_I \in \Delta^*$ is
	space-like if and only if the $1$-facial root system $(\Delta_I, \Phi_I)$ is
	of level $1$, in which case we have $\cB(\omega_I,\omega_I) \le 1$.
\end{proposition}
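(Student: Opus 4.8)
The plan is to reduce everything to the already-established level-$1$ case (Proposition~\ref{prop:level1}) applied to the facets, and then to read off the causal type of each $\omega_I$ from a signature computation. First I would record the facial containment: if $\Cone(\Delta_J)$ is a facet of the facet $\Cone(\Delta_I)$, then it is a codimension-$2$ face of $\cC$, so every $1$-facial subsystem of $(\Delta_I,\Phi_I)$ is a $2$-facial subsystem of $(\Delta,\Phi)$ and hence positive semidefinite. Thus each facet subsystem $(\Delta_I,\Phi_I)$ has level at most $1$: either it is finite or affine, or it is (by Proposition~\ref{prop:level1}) Lorentzian. Granting for the moment that $(V,\cB)$ is Lorentzian, I write $V=V_I\oplus\RR\omega_I$ with $V_I=\omega_I^\perp$. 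The equivalence ``$\alpha$ is space-like iff $H_\alpha$ is time-like'' recorded in the excerpt gives at once that $\omega_I$ is space-like, light-like, or time-like according as $V_I$ is time-like, light-like, or space-like; matching this against the three possibilities for $(\Delta_I,\Phi_I)$ shows that $\omega_I$ is space-like if and only if $V_I$ is indefinite, if and only if $(\Delta_I,\Phi_I)$ is Lorentzian, if and only if it is strictly of level~$1$. This settles the ``if and only if'' clause.

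The crux, and the step I expect to be the main obstacle, is proving that $(V,\cB)$ is Lorentzian, i.e.\ ruling out $n_-\ge 2$. Since the system is canonical and not affine it is non-degenerate, and some facet is indefinite, so $n_-\ge 1$; the facial reduction already forces $n_-(V_I)\le 1$ on every facet. To exclude $n_-=2$ I would argue as in Proposition~\ref{prop:level1}: were $V$ not Lorentzian there is an orthogonal pair $\bu$ (time-like), $\bv$ (light-like) spanning a plane all of whose rays except $\RR\bv$ are time-like, and this produces time-like facets. The new difficulty is that level~$2$ permits time-like facets, so a single time-like facet is no contradiction; I must instead exhibit a time-like $2$-facial subspace. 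This is where Lemma~\ref{lem:radon} enters: for a time-like $\bx$ with $\proj\bx\notin\proj\cC$ that is not beyond (or beneath) all facets but one, the lemma furnishes a line through $\proj\bx$ meeting two disjoint faces both of codimension $\ge 2$, whence $\bx=\bx_+-\bx_-$ with $\bx_+\in\Cone(\Delta_J)$ and $\bx_-\in\Cone(\Delta_K)$ for disjoint $\ge\!2$-facial $J,K$. Because distinct simple roots pair non-positively, $\cB(\bx_+,\bx_-)\le 0$, so $\cB(\bx,\bx)=\cB(\bx_+,\bx_+)+\cB(\bx_-,\bx_-)-2\cB(\bx_+,\bx_-)<0$ forces one of $V_J,V_K$ to be time-like, contradicting level~$2$. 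The delicate point I anticipate is the bookkeeping showing that an $n_-=2$ configuration necessarily produces such a ``doubly exterior'' time-like direction, equivalently that the time-like region cannot be covered by $\cC$ together with the single-facet crossings; this is exactly the dichotomy packaged in Lemma~\ref{lem:radon}(ii).

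For the inequality $\cB(\omega_I,\omega_I)\le 1$ when $\omega_I$ is space-like, the facet subsystem $(\Delta_I,\Phi_I)$ is strictly of level~$1$ and $V_I$ is Lorentzian. I would pick a simple root $\alpha\notin\Delta_I$ attaining $\cB(\alpha,\omega_I)=1$ and project it onto $V_I$: with $\alpha^{V_I}=\alpha-\omega_I/\cB(\omega_I,\omega_I)$ one computes $\cB(\alpha^{V_I},\alpha^{V_I})=1-1/\cB(\omega_I,\omega_I)$. Since $\cB(\alpha,\beta)\le 0$ for every $\beta\in\Delta_I$, the vector $-\alpha^{V_I}$ lies in the fundamental chamber of the facet subsystem, hence in its Tits cone; by the corollary to Proposition~\ref{prop:level1} that the Tits cone of a level-$1$ system is the set of non-space-like vectors, it is non-space-like, so $\cB(\alpha^{V_I},\alpha^{V_I})\le 0$, which gives $\cB(\omega_I,\omega_I)\le 1$.

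Finally, pairwise disjointness of $\Delta^*$ splits into the two defining conditions. That $\cB$ is not positive definite on $\Span\{\omega_I,\omega_J\}$ is immediate when either vector is non-space-like, since then a diagonal Gram entry is $\le 0$; when both are space-like I would use $\Span\{\omega_I,\omega_J\}=(V_I\cap V_J)^\perp$ together with the fact that $V_I\cap V_J$, being for adjacent facets a $2$-facial subspace, is non-time-like by level~$2$, so its orthogonal complement in the Lorentz space $V$ cannot be positive definite. The inequality $\cB(\omega_I,\omega_J)\le 0$ I would obtain by a sign analysis extending Proposition~\ref{prop:level1}: both $\omega_I$ and $\omega_J$ lie in the past-directed fundamental chamber $\cC^*$, and passing to the rank-$2$ residual picture on the plane $(V_I\cap V_J)^\perp$ reduces the claim to the rank-$2$ case. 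I expect the both-space-like, non-adjacent situation to require the most care, and to lean once more on the localization of time-like directions established in the Lorentzian step.
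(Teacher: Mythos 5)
Your overall strategy mirrors the paper's (reduce to the level-$1$ facets, use the orthogonal pair $\bu,\bv$ for the signature argument, invoke Lemma~\ref{lem:radon}, and prove $\cB(\omega_I,\omega_I)\le 1$ by projecting a simple root onto $V_I$ — that last computation and the ``if and only if'' clause are essentially correct, modulo the slip that Lorentzian level-$\le 1$ facets are ``of level $1$'', not necessarily \emph{strictly} so). But the two hardest parts are left genuinely open. For the Lorentzian step you correctly show that every exterior time-like direction must satisfy condition (ii) of Lemma~\ref{lem:radon}, and then you explicitly defer ``the bookkeeping showing that an $n_-=2$ configuration necessarily produces such a doubly exterior time-like direction.'' That bookkeeping is the whole point, and the paper does not do it via Lemma~\ref{lem:radon} at all. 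Instead it uses the stronger localization from the proof of Proposition~\ref{prop:level1}: since each facet subsystem has level $\le 1$, any time-like ray inside a facet hyperplane $\Span\Delta_I$ lies in the \emph{relative interior} of the facet $\Cone(\Delta_I)$. The projective line $L=\proj{\Span\{\bu,\bv\}}$ meets \emph{every} facet hyperplane, and all but one of these intersection points is time-like, hence in the relative interior of the corresponding facet; since $L$ meets $\partial\proj\cC$ in only two points, this forces $\proj\cC$ to be a pyramid with $L$ through the apex — impossible because the apex is a projective simple root, which is space-like while every point of $L$ except $\proj\bv$ is time-like. Without some such argument your proof of ``$(\Delta,\Phi)$ is Lorentzian'' does not close.

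The second gap is the inequality $\cB(\omega_I,\omega_J)\le 0$. Saying that both vectors lie in the fundamental chamber and ``passing to the rank-$2$ residual picture reduces the claim to the rank-$2$ case'' is not an argument: in a plane of signature $(1,1)$ two space-like vectors can pair with either sign, so you must explain why $\omega_I$ and $\omega_J$ sit on opposite sides of that plane's light cone. The paper does this concretely: when $\omega_I$ is not space-like it lies in $\cC$, so by the Lemma~\ref{lem:radon} conclusion $\cB(\omega_I,\omega_J)$ has a fixed sign for all $J$ with at most one exception $K$, and the exception is ruled out by writing $\omega_I=\lambda\alpha_s-\omega_I'$ with $s\in I\setminus K$ and contradicting $\cB(\alpha_s,\omega_I)=0$; when $\omega_I$ is space-like it computes $\cB(\alpha',\omega_J')\ge 0$ for the projections of $\alpha\in\Delta_J\setminus\Delta_I$ and $\omega_J$ onto $V_I$, which directly yields $\cB(\omega_I,\omega_J)\le 0$. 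Relatedly, your non-positive-definiteness argument for $\Span\{\omega_I,\omega_J\}$ only covers adjacent facets, since for non-adjacent ones $V_I\cap V_J$ is not a facial subspace; the paper instead deduces that $V_I\cap V_J$ is never time-like from the same Lemma~\ref{lem:radon} sign statement (a time-like $\bx$ there would have $\cB(\bx,\omega_I)=\cB(\bx,\omega_J)=0$, two vanishing pairings, which is excluded).
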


\begin{proof}
	Let $(\Delta,\Phi)$ be a canonical root system of level $2$ in $(V,\cB)$ with
	associated Coxeter system $(W,S)$.  If $V$ is of dimension $3$, it is
	immediate that $(\Delta,\Phi)$ is Lorentzian.  So we assume that the
	dimension of $V$ is at least $4$.

	We argue as in the proof of Proposition~\ref{prop:level1}.  If the
	representation space $V$ is not a Lorentz space, it contains a pair of
	orthogonal vectors $\bu$ and $\bv$ such that $\cB(\bu,\bu)<0$ and
	$\cB(\bv,\bv)=0$.  Then, for any linear combination $\bx=\lambda\bu+\mu\bv$,
	we have $\cB(\bx,\bx)<0$ as long as $\lambda\ne 0$.  The subspace
	$\Span(\bu,\bv)$ intersect the hyperplanes $\Span(\Delta_I)$ at a ray $\RR_+
	\bx_I$ for every $1$-facial subset $I \subset S$.  Since the $1$-facial root
	subsystems are all of level $\le 1$, we see from the proof of
	Proposition~\ref{prop:level1} that, whenever $\Span(\Delta_I)$ contains a
	time-like ray $\RR_+ \bx_I$, it must be in the interior of the facet
	$\Cone(\Delta_I)$.  In the projective picture (modeled on $H^1_\bt$),
	$\Span\{\bu,\bv\}$ appears as a line $L$ intersecting every facet of
	$\proj\cC$ such that every intersection point are in the interior of a facet,
	with at most one exception (namely $\proj \bv$) on the boundary of a facet.
	But $L$ intersects the boundary of $\proj \cC$ only at two points.  Now that
	one is in the interior of a facet, the other must be the intersection of all
	other facets, hence a vertex.  The only possibility is that $\proj \cC$ is a
	pyramid, in which case $L$ pass through the apex and an interior point of the
	base facet.  But the apex is a projective simple root $\proj \alpha$, and
	$\cB(\alpha,\alpha)>0$, so $\proj \alpha \notin L$.  This contradiction
	proves that $V$ is a Lorentz space.

	Let $\bx$ be a vector with $\cB(\bx,\bx) \le 0$ and assume $h(\bx)\ge 0$.
	Contrary to the case of level $1$, it is possible that $\proj\bx \notin
	\proj\cC$.  In this case, we can again write $\bx=\bx_+-\bx_-$ where $\bx_+
	\in \Cone(\Delta_I)$ and $\bx_- \in \Cone(\Delta_J)$ and $I$, $J$ are two
	disjoint facial subset of $S$.  And again, since $\cB(\bx_+,\bx_-)<0$, we
	have either $\cB(\bx_+,\bx_+)$ or $\cB(\bx_-,\bx_-)<0$.  Since
	$(\Delta,\Phi)$ is of level $2$, the $1$-facial root subsystems are all of
	level $\le 1$, so we must have either $\bx_+$ or $\bx_-$ in the interior of a
	facet of $\cC$ by Proposition~\ref{prop:level1}.  In the projective picture,
	it means that for \emph{any} line through $\proj\bx$ that intersects two
	disjoint faces of $\proj \cC$, one intersection point must be in the interior
	of a facet.  We then conclude from Lemma~\ref{lem:radon} that
	$\cB(\bx,\omega)$ are non zero, and have the same sign for all but one
	$\omega\in\Delta^*$.  On the other hand, if $\proj\bx \in \proj\cC$, only
	when $\cB(\bx,\bx)=0$ is it possible that $\proj\bx$ lies on a
	codimension-$2$ face of $\proj\cC$.

	Consequently, the intersection $\Span\Delta_I \cap \Span\Delta_J$ is not
	time-like for any two $1$-facial subsets $I \ne J \subset S$.  The orthogonal
	companion of the intersection is the subspace $\Span\{\omega_I, \omega_J\}$,
	which is not space-like.  For proving the disjointness, one still needs to
	prove that $\cB(\omega_I,\omega_J)\le 0$.

	Assume that $\omega_I$ is not space-like.  A similar argument as in the proof
	of Proposition~\ref{prop:level1} shows that $\cB(\omega_I, \omega_J)\le 0$
	for all $J\ne I$ with at most one exception.  Let $K\ne I$ be this exception.
	Pick a generator $s \in I \setminus K$, we can write $\omega_I =
	\lambda\alpha_s - \omega'_I$, where $\omega'_I$ is a linear combination of
	$\Delta_K$ with coefficients of same sign, which is also the sign of
	$\lambda$.  We have $\cB(\alpha_s, \omega_I) = 0$ by definition, but this is
	not the case since $\cB(\alpha_s, \alpha_t) \le 0$ for $t \in K$ while
	$\cB(\alpha_s, \alpha_s) = 1$.  Therefore, the exception $K$ does not exist.

	If $\omega_I$ is space-like, then the subspace $\Span\Delta_I$ is
	time-like, so $(\Span\Delta_I, \cB)$ is a (non-degenerate) Lorentz space.
	This proves that $(\Delta_I, \Phi_I)$ is of level $1$.  Then, for a simple root
	$\alpha \notin \Delta_I$, let 
	\[
		\alpha'=\alpha-\frac{\cB(\alpha,\omega_I)}{\cB(\omega_I,\omega_I)}\omega_I
	\]
	be the projection of $\alpha$ on $\Span\Delta_I$.  Since
	$\cB(\alpha',\beta) \le 0$ for all $\beta \in \Delta_I$, $\proj\alpha'$ is in
	the Coxeter polytope $\proj \cC^*_I$ for $(\Delta_I,\Phi_I)$.  Since
	$(\Delta_I, \Phi_I)$ is of level $1$, $\alpha'$ is not space-like by
	Proposition~\ref{prop:level1}, i.e.
	\[
		\cB(\alpha',\alpha')=\cB(\alpha,\alpha)-\frac{\cB(\alpha,\omega_I)^2}{\cB(\omega_I,\omega_I)}
		=1-\frac{\cB(\alpha,\omega_I)^2}{\cB(\omega_I,\omega_I)}\le 0,
	\]
	which proves that 
	\begin{equation}\label{eqn:omega<1}
		\cB(\omega_I,\omega_I) \le \min_{\alpha\notin\Delta_I} \cB(\alpha,\omega_I)^2 = 1.
	\end{equation}
	Let $J$ be a $1$-facial subset such that $\alpha
	\in \Delta_J$, and
	\[
		\omega'_J=\omega_J-\frac{\cB(\omega_J,\omega_I)}{\cB(\omega_I,\omega_I)}\omega_I
	\]
	be the projection of $\omega_J$ on $\Span\Delta_I$. Then, since $\alpha'
	\in \proj \cC^*$,
	\[
		\cB(\alpha',\omega'_J)=\cB(\alpha,\omega_J)-\frac{\cB(\alpha,\omega_I)\cB(\omega_J,\omega_I)}{\cB(\omega_I,\omega_I)}
		=-\frac{\cB(\alpha,\omega_I)\cB(\omega_J,\omega_I)}{\cB(\omega_I,\omega_I)}=\cB(\alpha',\omega_J)\ge 0,
	\]
	which proves that $\cB(\omega_I,\omega_J) \le 0$.  Since $J$ can be chosen as
	any $1$-facial subset $J\ne I\subset S$, this finish the proof of disjointness.
\end{proof}

The proposition has an interesting consequence.
\begin{corollary}\label{cor:lev2-1}
	Let $\Delta$ be a canonical root basis of level $2$ in $(V,\cB)$, then the
	set
	\[
		\Delta\cup\{-\omega/\sqrt{\cB(\omega,\omega)}\mid \omega \in \Delta^*,
	\cB(\omega, \omega)>0 \}
\]
is a canonical root basis of level $1$.
\end{corollary}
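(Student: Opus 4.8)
The plan is to verify the three axioms of a canonical root basis for the enlarged set $\Delta':=\Delta\cup\{\normal\omega\mid\omega\in\Delta^*,\ \cB(\omega,\omega)>0\}$, writing $\normal\omega=-\omega/\sqrt{\cB(\omega,\omega)}$, and then to compute its level. Since $\Delta\subseteq\Delta'$ still spans the Lorentz space $(V,\cB)$, which is non-degenerate, the system $(\Delta',\Phi')$ is automatically canonical; so the real work is the normalisation, the angle condition, positive independence, and level~$1$.

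Axioms (1) and (2) are routine consequences of Proposition~\ref{prop:level2}. By construction $\cB(\normal\omega,\normal\omega)=1$, giving~(1). For~(2) there are three cases. Between two old roots the condition holds because $\Delta$ is a root basis. For $\alpha\in\Delta$ and $\normal{\omega_I}$ one has $\cB(\alpha,\normal{\omega_I})=-\cB(\alpha,\omega_I)/\sqrt{\cB(\omega_I,\omega_I)}$; this vanishes when $\alpha\in\Delta_I$ (the admissible value $-\cos(\pi/2)$), and otherwise the bounds $\cB(\alpha,\omega_I)\ge1$ and $\cB(\omega_I,\omega_I)\le1$ from Proposition~\ref{prop:level2} force $\cB(\alpha,\normal{\omega_I})\le-1$. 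Between two new roots $\normal{\omega_I},\normal{\omega_J}$, disjointness of $\Delta^*$ gives $\cB(\normal{\omega_I},\normal{\omega_J})\le0$, while the fact that $\cB$ is not positive definite on $\Span\{\omega_I,\omega_J\}$ forces $|\cB(\normal{\omega_I},\normal{\omega_J})|\ge1$ for the unit vectors, hence $\cB(\normal{\omega_I},\normal{\omega_J})\le-1$. All products therefore land in $]-\infty,-1]\cup\{-\cos(\pi/k)\}$.

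For the remaining two points I would pass to the projective picture of Section~\ref{ssec:proj}. Adding $\normal{\omega_I}$ amounts to truncating the Coxeter polytope $\proj\cC^*$ by the polar hyperplane $\proj{\omega_I^\perp}=\proj{\Span\Delta_I}$ of the hyperideal (space-like) vertex $\proj{\omega_I}$; indeed the dual cone of $\Cone(\Delta')$ is exactly $\{\bx\in\cC^*\mid\cB(\bx,\omega_I)\le0\text{ for all space-like }\omega_I\}$, the truncated chamber. Positive independence of $\Delta'$ is equivalent to pointedness of $\Cone(\Delta')$, that is, to this truncated chamber being full-dimensional. Here the angle computation pays off: since $\cB(\normal{\omega_I},\normal{\omega_J})\le-1$, the truncating hyperplanes of distinct hyperideal vertices are pairwise non-crossing inside the light cone, so each truncation excises a disjoint corner and cannot consume the whole polytope. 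Algebraically, a non-trivial non-negative relation would produce $0\neq\mathbf{z}\in\Cone(\Delta)\cap\Cone(\{\omega_I\text{ space-like}\})$; testing $\mathbf{z}$ against the $\normal{\omega_I}$ and summing over $I$ (using $\cB(\normal{\omega_I},\normal{\omega_J})\le-1$ once more) kills $\mathbf{z}$ whenever three or more hyperideal vertices are involved, and the residual cases collapse after projecting onto a time-like $\Span\Delta_I$ and invoking Proposition~\ref{prop:level1} for the level-$1$ facial subsystem $(\Delta_I,\Phi_I)$.

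Finally, for level~$1$: since $\Delta'$ still spans a Lorentz space the new system is at least level~$1$, so it suffices to show that every facet of $\Cone(\Delta')$ is non-time-like, equivalently that every extreme ray of the truncated chamber is non-space-like. Such an extreme ray is either a surviving vertex $\omega_J$ of $\cC^*$ — necessarily non-space-like, since the space-like ones are precisely those cut off — or a new vertex lying on some truncating hyperplane $\omega_I^\perp=\Span\Delta_I$. In the latter case the ray lies in $\Span\Delta_I$ with $\cB(\cdot,\alpha)\ge0$ for $\alpha\in\Delta_I$, hence in the fundamental chamber $\cC^*_I$ of the facial subsystem $(\Delta_I,\Phi_I)$, which is level~$1$ by Proposition~\ref{prop:level2}; by Proposition~\ref{prop:level1} the vectors of $\Delta_I^*$ are non-space-like and pairwise disjoint, so their positive hull $\cC^*_I$ consists of non-space-like vectors and the new vertex is non-space-like. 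The main obstacle is the positive-independence step: unlike the rest it is not a formal consequence of the earlier propositions, but requires controlling how the truncating hyperplanes sit relative to one another, which is exactly where the inequality $\cB(\normal{\omega_I},\normal{\omega_J})\le-1$ and the strict convexity of the light cone must be used with care.
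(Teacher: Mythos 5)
The paper offers no proof of this corollary --- it is presented as an immediate consequence of Proposition~\ref{prop:level2} --- so your write-up is essentially supplying omitted details, and most of it is sound. The verification of the two inner-product axioms is complete and correct: $\cB(\alpha,\omega_I)\ge 1$ for $\alpha\notin\Delta_I$ together with \eqref{eqn:omega<1} gives $\cB(\alpha,\normal{\omega_I})\le-1$, and disjointness of two space-like weights is equivalent to $\cB(\normal{\omega_I},\normal{\omega_J})\le-1$ (the paper records this equivalence in Section~\ref{ssec:packing}). Your level-$1$ argument is also correct: an extreme ray of the truncated chamber $\Cone(\Delta')^*$ is either a surviving, hence non-space-like, $\omega_J$, or lies in $\Span\Delta_I$ with $\cB(\cdot,\alpha)\ge0$ for all $\alpha\in\Delta_I$, i.e.\ in the chamber of the level-$1$ subsystem $(\Delta_I,\Phi_I)$, and is therefore non-space-like by Proposition~\ref{prop:level1}. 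All of this, however, presupposes that $\Cone(\Delta')$ is pointed.

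That is the genuine gap, and you have located it but not closed it. The geometric sketch is incomplete: pairwise disjointness of the excised half-spaces does not by itself rule out that the entire hyperbolic part of $\proj{\cC^*}$ lies inside a single one of them. The algebraic sketch is wrong: with $\mathbf{z}=\sum c_\alpha\alpha=\sum e_I\omega_I$ and $f_I=e_I\sqrt{\cB(\omega_I,\omega_I)}$ one only gets $\cB(\mathbf{z},\mathbf{z})\le 2\sum f_I^2-\bigl(\sum f_I\bigr)^2$, which is positive for $f=(1,\epsilon,\epsilon)$, so ``three or more hyperideal vertices'' does not kill $\mathbf{z}$. In fact no argument can close this step in the stated generality: if $\Delta=\{\alpha_0\}\sqcup\Delta''$ with $\alpha_0$ orthogonal to a level-$1$ Lorentzian basis $\Delta''$ (a level-$2$ basis; these are exactly the disconnected $(2,0)$-graphs mentioned in Section~\ref{sec:Tumarkin}), then the weight of the facet $\Cone(\Delta'')$ is $\omega_{I_0}=\alpha_0$, so $\Delta'$ contains both $\alpha_0$ and $-\alpha_0$ and is not positively independent. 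Assuming irreducibility, a working argument pairs a putative relation $\sum c_\alpha\alpha+\sum d_I\normal{\omega_I}=0$ with each $\normal{\omega_{I_0}}$; since all cross terms are $\le 0$ and the terms between two new roots are $\le-1$, this yields $d_{I_0}\ge\sum_{I\ne I_0}d_I$, so at most two of the $d_I$ are nonzero; the two-term case dies on pairing with a simple root outside $\Delta_{I_0}\cap\Delta_{I_1}$, and the one-term case forces $\cB(\omega_{I_0},\omega_J)=0$ for all $J\ne I_0$, which either contradicts disjointness or reproduces the orthogonal $A_1$ factor above.
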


The following proposition is proved for free root basis in, for instance,
\citelist{\cite{bourbaki1968}*{Ch.~V, \S~4.4, Theorem~1}
\cite{abramenko2008}*{Lemma 2.58}}.  An extension for canonical root basis can
be found in \cite{krammer2009}*{Theorem 1.2.2(b)}, who refers to
\cite{vinberg1971} for proof.  Note that $\cC^*$ is closed in the present
paper, so the inequalities are not strict.
\begin{proposition}[extending \cite{maxwell1982}*{Corollary 1.8}]\label{prop:maxwell18}
	For $\bx \in \cC^*$, $w\in W$ and $\alpha_s \in \Delta$, either $\cB(w(\bx),
	\alpha_s) \ge 0$ and $\length(sw) > \length(w)$, or $\cB(w(\bx), \alpha_s)
	\le 0$ and $\length(sw) < \length(w)$.
\end{proposition}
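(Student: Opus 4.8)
The plan is to reduce the sign of the inner product $\cB(w(\bx),\alpha_s)$ to a question about whether a single root is positive or negative, and then to read off the length comparison from the standard length--root correspondence for Coxeter groups. Since $w \in W \subseteq O_\cB(V)$ preserves $\cB$, the first step is simply to rewrite
\[
	\cB(w(\bx),\alpha_s) = \cB(\bx, w^{-1}(\alpha_s)).
\]
The vector $w^{-1}(\alpha_s)$ belongs to the orbit $\Phi = W(\Delta)$, hence is a root, and so lies in exactly one of $\Phi^+$ or $\Phi^- = -\Phi^+$. Because $\bx \in \cC^*$ and $\Phi^+ \subseteq \Cone(\Delta) = \cC$, I have $\cB(\bx,\gamma) \ge 0$ for every $\gamma \in \Phi^+$ and correspondingly $\cB(\bx,\gamma) \le 0$ for every $\gamma \in \Phi^-$. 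Thus the sign of $\cB(w(\bx),\alpha_s)$ is dictated entirely by whether $w^{-1}(\alpha_s)$ is positive or negative.

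The second step is to invoke the sign criterion: for $w \in W$ and a simple reflection $s$, one has $\length(sw) > \length(w)$ precisely when $w^{-1}(\alpha_s) \in \Phi^+$, and $\length(sw) < \length(w)$ precisely when $w^{-1}(\alpha_s) \in \Phi^-$. Since $\length(sw)$ and $\length(w)$ always differ by exactly one, these two cases are exhaustive and mutually exclusive. For a free root basis this is classical (Bourbaki); in the present setting, where $\Delta$ is only positively independent, it is the content of the cited results of Krammer and Vinberg, which I would take as given.

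Combining the two steps settles the proposition immediately. If $w^{-1}(\alpha_s) \in \Phi^+$, then $\cB(w(\bx),\alpha_s) \ge 0$ and simultaneously $\length(sw) > \length(w)$, which is the first alternative; if $w^{-1}(\alpha_s) \in \Phi^-$, then $\cB(w(\bx),\alpha_s) \le 0$ and $\length(sw) < \length(w)$, the second. A vanishing inner product causes no ambiguity: the length comparison is strict and hence unambiguously selects which of the two non-strict inequalities is being asserted.

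The only genuinely delicate ingredient is the sign criterion of the second step. Its usual proof rests on the facts that $s_{\alpha_s}$ permutes $\Phi^+ \setminus \{\alpha_s\}$ while sending $\alpha_s \mapsto -\alpha_s$, and that $\Phi$ is the disjoint union $\Phi^+ \sqcup \Phi^-$; both depend on the positive independence of $\Delta$ rather than on its linear independence. This is exactly the passage from free to canonical root bases that Krammer (after Vinberg) supplies, so I expect the main obstacle to already be resolved in the literature, leaving only the short orthogonality-plus-dual-cone argument above to carry out here.
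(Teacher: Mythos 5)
Your argument is correct. Note, though, that the paper does not actually prove Proposition~\ref{prop:maxwell18}: it simply cites the literature (Bourbaki and Abramenko--Brown for free root bases, Krammer/Vinberg for the extension to canonical ones) and only adds the remark that since $\cC^*$ is taken closed here the inequalities are non-strict. What you have written is the standard derivation that those references carry out: rewrite $\cB(w(\bx),\alpha_s)=\cB(\bx,w^{-1}(\alpha_s))$, observe that $w^{-1}(\alpha_s)\in\Phi=\Phi^+\sqcup\Phi^-$ with $\Phi^+\subseteq\Cone(\Delta)=\cC$ so that membership in $\cC^*$ fixes the sign, and then invoke the length--root criterion $\length(sw)>\length(w)\Leftrightarrow w^{-1}(\alpha_s)\in\Phi^+$. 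Your decomposition is in fact slightly finer than the paper's, in that you isolate exactly which ingredient needs the passage from free to positively independent root bases (the disjoint partition $\Phi=\Phi^+\sqcup\Phi^-$ and the sign criterion), and you correctly note that this is where Krammer's and Vinberg's results enter; everything else is the elementary dual-cone computation. The one point worth stating explicitly, which you do handle, is that the two alternatives in the statement are distinguished by the strict length comparison, not by the (possibly vanishing) inner product. No gaps.
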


\begin{theorem}[extending \cite{maxwell1982}*{Theorem 1.9}]\label{thm:level}
	The followings are equivalent:
	\begin{enumerate}
			\renewcommand{\labelenumi}{(\alph{enumi})}
			\renewcommand{\theenumi}{\labelenumi}
		\item $(\Delta,\Phi)$ is of level $1$ or $2$;\label{thm:levela}

		\item $(\Delta,\Phi)$ is Lorentzian and any two vectors in $\Omega$ are
			disjoint.\label{thm:levelb}
	\end{enumerate}
\end{theorem}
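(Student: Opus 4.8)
The plan is to prove the two implications separately, treating \eqref{thm:levelb}$\Rightarrow$\eqref{thm:levela} as the softer duality argument and \eqref{thm:levela}$\Rightarrow$\eqref{thm:levelb} as a length induction that bootstraps from the disjointness of $\Delta^*$ already established in Propositions~\ref{prop:level1} and~\ref{prop:level2}.

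For \eqref{thm:levelb}$\Rightarrow$\eqref{thm:levela}: since a Lorentz space is indefinite, $(\Delta,\Phi)$ is neither finite nor affine, so its level is at least $1$, and it remains to bound it by $2$. I would use the reformulation of Definition~\ref{def:level} for Lorentzian systems, namely that the level is $1$ plus the maximal codimension of a time-like face of $\cC$; it therefore suffices to show that no face of codimension $2$ is time-like. (Every face of higher codimension lies in a codimension-$2$ face, and a subspace containing a time-like subspace is itself time-like, so codimension $2$ is the only case to check.) Let $\Cone(\Delta_K)$ be a face of codimension $2$. It is the intersection of the facets containing it, so its orthogonal companion $(\Span\Delta_K)^\perp$ is the $2$-dimensional space spanned by the corresponding vectors $\omega_I\in\Delta^*$, at least two of which, say $\omega_{I_1}$ and $\omega_{I_2}$, are linearly independent. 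In Lorentz space a subspace is time-like exactly when its orthogonal companion is space-like, so $\Span\Delta_K$ being time-like would force $\cB$ to be positive definite on $\Span\{\omega_{I_1},\omega_{I_2}\}$, contradicting the disjointness of $\omega_{I_1}$ and $\omega_{I_2}$ assumed in~\eqref{thm:levelb}. Hence no codimension-$2$ face is time-like, and the level is $1$ or $2$.

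For \eqref{thm:levela}$\Rightarrow$\eqref{thm:levelb}: that $(\Delta,\Phi)$ is Lorentzian and that $\Delta^*$ is pairwise disjoint are exactly the content of Propositions~\ref{prop:level1} and~\ref{prop:level2}; the real work is to upgrade disjointness from $\Delta^*$ to all of $\Omega=W(\Delta^*)$. Writing two elements of $\Omega$ as $u(\omega_I)$ and $u'(\omega_J)$ and using that $W$ preserves $\cB$, disjointness of the pair is equivalent to disjointness of $\omega_I$ and $w(\omega_J)$ with $w=u^{-1}u'$, so I would prove by induction on $\length(w)$ that $\omega_I$ and $w(\omega_J)$ are disjoint. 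The base case $w=e$ is the disjointness of $\Delta^*$. For the inductive step, choose a left descent $s$ of $w$, write $w=sw'$ with $\length(w')<\length(w)$, and split on whether $s\in I$. If $s\in I$ then $s(\omega_I)=\omega_I$, the span $\Span\{\omega_I,w(\omega_J)\}$ is the $s$-image of $\Span\{\omega_I,w'(\omega_J)\}$ and the inner product $\cB(\omega_I,w(\omega_J))=\cB(\omega_I,w'(\omega_J))$ is unchanged, so disjointness is inherited verbatim from the induction hypothesis since $s$ is orthogonal. If $s\notin I$, put $\bx=\omega_I$ and $\by=w'(\omega_J)$, so that $w(\omega_J)=s(\by)=\by-t\,\alpha_s$ with $t=2\cB(\alpha_s,\by)$; Proposition~\ref{prop:maxwell18} applied to $\omega_J\in\cC^*$ and $w'$ (for which $\length(sw')>\length(w')$) gives $t\ge 0$, while $s\notin I$ gives $\cB(\bx,\alpha_s)=\cB(\omega_I,\alpha_s)>0$, whence
\[
	\cB(\bx,s(\by))=\cB(\bx,\by)-t\,\cB(\bx,\alpha_s)\le\cB(\bx,\by)\le 0,
\]
the last inequality by the induction hypothesis. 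This settles the inner-product half of disjointness.

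The main obstacle is the remaining \emph{span} half, namely showing that $\cB$ is not positive definite on $\Span\{\bx,s(\by)\}$, in the case where both $\omega_I$ and $\omega_J$ are space-like (otherwise one of $\bx,s(\by)$ is non-space-like and the span is automatically not positive definite). Here I would carry the Gram-determinant condition through the induction: disjointness of $\bx,\by$ gives $\cB(\bx,\by)^2\ge\cB(\bx,\bx)\,\cB(\by,\by)$, and since $\cB(s(\by),s(\by))=\cB(\by,\by)$ by orthogonality, while the displayed inequality yields $\cB(\bx,s(\by))\le\cB(\bx,\by)\le 0$ and hence $\cB(\bx,s(\by))^2\ge\cB(\bx,\by)^2$, we obtain $\cB(\bx,s(\by))^2\ge\cB(\bx,\bx)\,\cB(s(\by),s(\by))$; that is, the Gram determinant of $\{\bx,s(\by)\}$ is nonpositive and the span is not positive definite. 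The delicate point is precisely that the length-reducing reflection pushes the inner product in the favourable (more negative) direction, which is what makes the non-positivity of the Gram determinant self-propagating along the induction.
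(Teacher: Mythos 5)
Your overall architecture matches the paper's: you prove (b)$\Rightarrow$(a) via the duality between a codimension-$2$ face of $\cC$ and the two-dimensional span of the corresponding pair of vectors in $\Delta^*$, exactly as the paper does, and you prove (a)$\Rightarrow$(b) by induction on $\length(w)$ applied to the pair $\omega_I$, $w(\omega_J)$, using Proposition~\ref{prop:maxwell18} to control the sign of $\cB(\alpha_s,w'(\omega_J))$, which is also the paper's route. Your Gram-determinant propagation for the ``span not positive definite'' half is a genuinely different (and arguably cleaner) argument than the paper's, which instead assumes positive definiteness, projects $w(\omega_J)$ onto $V_I=\omega_I^\perp$, and derives a contradiction from the fact that this projection lies in the Coxeter polytope of the level-$1$ subsystem $(\Delta_I,\Phi_I)$ and hence cannot be space-like.

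There is, however, a genuine gap in your inductive step for $s\notin I$: you invoke ``the induction hypothesis'' to conclude $\cB(\bx,\by)\le 0$ for $\bx=\omega_I$ and $\by=w'(\omega_J)$, but the statement being proved only concerns \emph{distinct} pairs in $\Omega$ (a space-like vector is never disjoint from itself), so the hypothesis says nothing when $w'(\omega_J)=\omega_I$. This case genuinely occurs --- already at $w=s$ with $s\notin I$ and $J=I$, where you must show that $\omega_I$ and $s(\omega_I)\ne\omega_I$ are disjoint --- and there $\cB(\bx,\by)=\cB(\omega_I,\omega_I)>0$ whenever $\omega_I$ is space-like, so both your inequality chain and the squaring step $\cB(\bx,s(\by))^2\ge\cB(\bx,\by)^2$ collapse. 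The paper treats this case separately: $\cB(\omega_I,w(\omega_J))=\cB(\omega_I,\omega_I)-2\cB(\alpha_s,\omega_I)^2\le 0$ because $\cB(\alpha_s,\omega_I)\ge 1$ by the normalization of $\omega_I$ and $\cB(\omega_I,\omega_I)\le 1$ by \eqref{eqn:omega<1} from Proposition~\ref{prop:level2}. The same two inequalities give $\cB(\bx,s(\by))\le-\cB(\omega_I,\omega_I)$, hence $\cB(\bx,s(\by))^2\ge\cB(\bx,\bx)\cB(s(\by),s(\by))$, which is what your Gram-determinant step needs here. So the gap is repairable, but it requires an input ($\cB(\omega_I,\omega_I)\le 1$) that your argument never uses, and without it the induction fails at its first nontrivial instance.
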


\begin{proof}
	Maxwell's proof applies with slight modification.  

	\ref{thm:levela}$\Rightarrow$\ref{thm:levelb}:  We only need to prove the
	disjointness.  We first prove that, for any $\omega_I, \omega_J \in \Delta^*$
	and $w \in W$,
	\begin{equation}\label{eqn:maxwell15}
		\cB(\omega_I,w(\omega_J))\le 0
	\end{equation}
	by induction on the length of $w\in W$.  The case of $w=e$ is already known.
	One may assume that $\length(tw)>\length(w)$ for all $t\in I$, otherwise one may
	replace $w$ by $tw$ in \eqref{eqn:maxwell15}.  So $w=sw'$ for some $s \notin
	I$ and $\length(w)>\length(w')$.  We then have
	\[
		\cB(\omega_I,w(\omega_J))=(s(\omega_I),w'(\omega_J))
		=\cB(\omega_I,w'(\omega_J))-2\cB(\alpha_s,\omega_I)\cB(\alpha_s,w'(\omega_J)).
	\]
	If $\omega_I \ne w'(\omega_J)$, \eqref{eqn:maxwell15} is proved since
	$\cB(\omega_I,w'(\omega_J))\le 0$ by inductive hypothesis,
	$\cB(\alpha_s,\omega_I)>0$ by definition, and $\cB(\alpha_s,w'(\omega_J)) \ge
	0$ by Proposition~\ref{prop:maxwell18}.  Otherwise, if $\omega_I =
	w'(\omega_J)$, we have 
	\[
		\cB(\omega_I,w(\omega_J)) =\cB(\omega_I,\omega_I)-2\cB(\alpha_s,\omega_I)^2 \le 0
	\]
	by \eqref{eqn:omega<1}.

	It remains to prove that $\cB$ is not positive definite on the subspace
	$\Span(\omega_I,w(\omega_J))$.  Assume the opposite.  Then $\omega_I$ is
	space-like hence $(\Delta_I,\Phi_I)$ is of level~$1$.  Let $\bv$ be the
	projection of $w(\omega_J)$ on $\omega_I^\perp=V_I$.  The subspace
	$\bv^\perp$ in $V_I$ is the time-like intersection $\omega_I^\perp \cap
	w(\omega_J)^\perp$ in $V$, so $\bv$ must be space-like.  On the other hand,
	for all $t\in I$, $\cB(\alpha_t,w(\omega_J)) \ge 0$ because $\length(tw) \ge
	\length(w)$.  We then conclude that $\bv$ is in the Coxeter polytope of
	$(\Delta_I, \Phi_I)$, so $\bv$ must be time-like.  This contradiction
	finishes the proof of disjointness. 

	\ref{thm:levelb}$\Rightarrow$\ref{thm:levela}: Since $\cB$ is not positive
	definite on $\Span\{\omega_I,\omega_J\}$ for any $\omega_I \ne \omega_J \in
	\Delta^*$, the orthogonal companion of these subspaces, which contains the
	codimension-$2$ faces of $\cC^*$, are not time-like.  So $\cB$ is positive
	semidefinite on all codimension-$2$ faces, which proves that $(\Delta,\Phi)$
	is of level $1$ or $2$.
\end{proof}

\subsection{Infinite ball packings}\label{ssec:packing}

For a \emph{space-like} vector $\bx$ in the Lorentz space $(V,\cB)$, the
\emph{normalized vector} $\normal{\bx}$ of $\bx$ is given by
\[ 
  \normal\bx=\bx/\sqrt{\cB(\bx,\bx)}.
\]
It lies on the one-sheet hyperboloid $\mathcal{H}=\{\bx\in V
\mid\cB(\bx,\bx)=1\}$.  Note that $\proj\bx=\proj{-\bx}$ is the same point in
$\mathbb{P}V$, but $\normal\bx$ and $\normal{-\bx}$ are two different vectors in
opposite directions in~$V$.  One verifies that two space-like vectors $\bx,\by$
are disjoint if and only if $\cB(\normal\bx,\normal\by) \le -1$. 

\begin{figure}[htb]
	\centering
	\includegraphics[width=.4\textwidth]{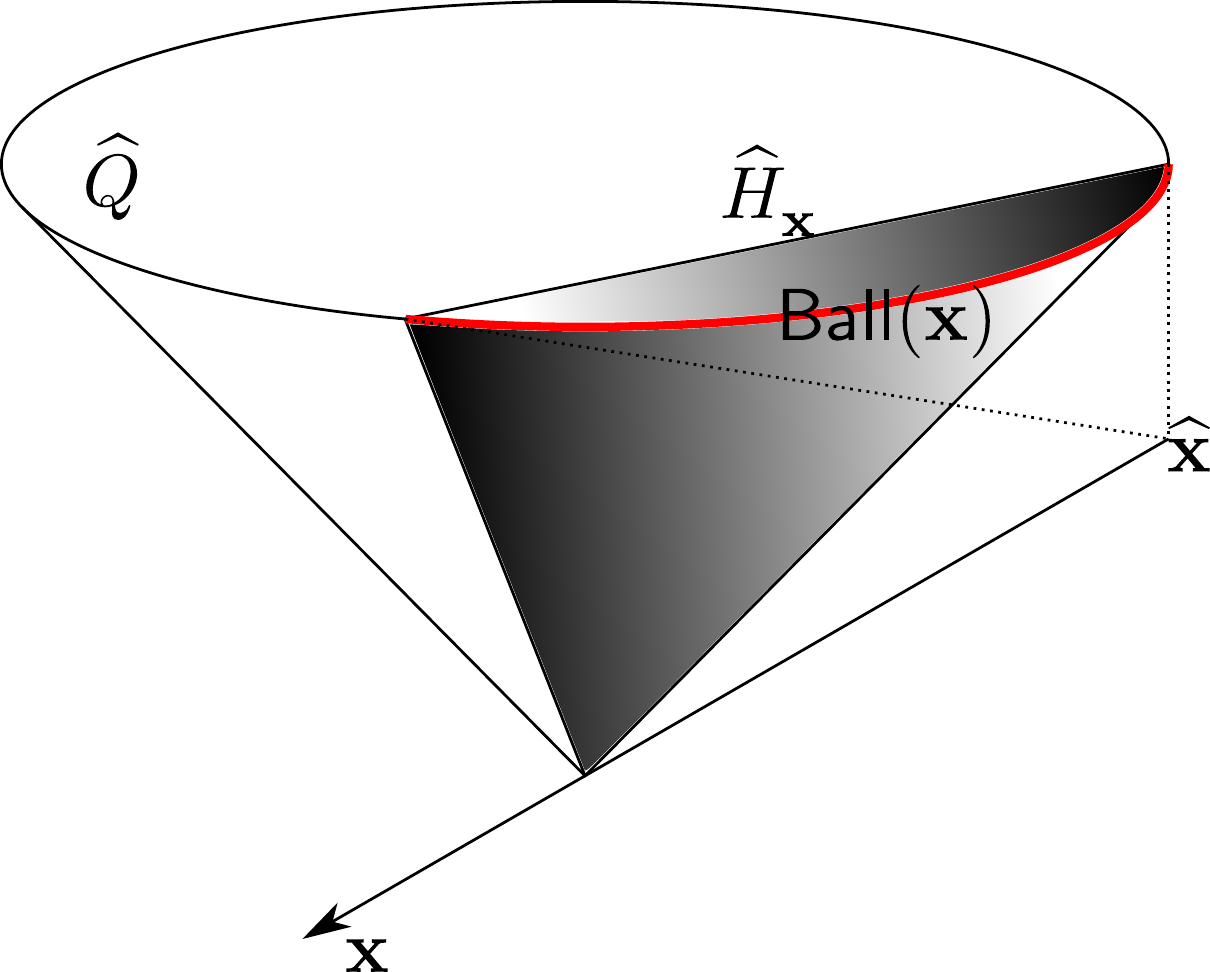}
	\caption{\label{fig:ball}}
\end{figure}

A correspondence between space-like directions in $(d+2)$-dimensional Lorentz
space $(V,\cB)$ and $d$-dimensional balls is introduced in
\cite{maxwell1982}*{\S 2}, see also
\citelist{\cite{hertrich-jeromin2003}*{\S~1.1} \cite{cecil2008}*{\S~2.2}}.  Fix
a time-like direction of past $\bt$ so that the projective light cone $\proj Q$
appears as a closed sphere on $H_\bt^1$.  Then in the affine picture, given a
space-like vector~$\bx\in V$, the intersection of $\proj Q$ with the half-space
$\proj H_\bx^-=\{\bx'\in H_\bt^1\mid\cB(\bx,\bx')\le 0\}$ is a closed ball
(spherical cap) on $\proj Q$. We denote this ball by $\ball(\bx)$; see
Figure~\ref{fig:ball}.  After a stereographic projection, $\ball(\bx)$ becomes
an $d$-dimensional ball in Euclidean space.  Here, we also regard closed
half-spaces as closed balls of curvature $0$, and complement of open balls as
closed balls of negative curvature.  For two past-directed space-like vectors
$\bx$ and $\by$, we have
\begin{itemize}
  \item $\ball(\bx)$ and $\ball(\by)$ are disjoint if $\cB(\normal \bx,\normal
  	\by)<-1$; 

  \item $\ball(\bx)$ is tangent to $\ball(\by)$ if $\cB(\normal \bx,\normal
  	\by)=-1$;

  \item $\ball(\bx)$ and $\ball(\by)$ \emph{overlap} (i.e. their interiors
  	intersect) if $\cB(\normal \bx,\normal \by)>-1$; 

  \item $\ball(\bx)$ and $\ball(\by)$ \emph{heavily overlap} (i.e. their
  	boundary intersect transversally at an obtuse angle, or one is contained in
  	the other) if $\cB(\normal\bx,\normal\by)>0$.

  \item One of $\ball(\bx)$ and $\ball(\by)$ is contained in the other if
  	$\cB(\normal\bx,\normal\by)\ge 1$.
\end{itemize}

A \emph{ball packing} is a collection of balls with disjoint interiors.  It is
then clear that a ball packing correspond to a set of space-like vectors $X\in
V$, with at most one future-directed vector, such that any two vectors are
disjoint.  Conversely, Maxwell proved that every such set of space-like vectors
correspond to a ball packing \cite{maxwell1982}*{Proposition 3.1}.  So
Theorem~\ref{thm:packing}, which we restate below, follows directly from
Theorem~\ref{thm:level}.

\begin{theorem}[extending \cite{maxwell1982}*{Theorem 3.2}]\label{thm:packing2}
	Let $\Omega_r$ be the set of space-like vectors in $\Omega$, then
	$\{\ball(\omega) \mid \omega \in \Omega_r\}$ is a ball packing if and only if
	the associated Lorentzian root system is of level~$2$.
\end{theorem}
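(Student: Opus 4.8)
The plan is to read the statement off Theorem~\ref{thm:level} through Maxwell's dictionary between space-like vectors and balls. By that dictionary a family of space-like vectors that is past-directed (so at most one of its members is future-directed) corresponds to a ball packing exactly when the vectors are pairwise disjoint: the forward implication is the remark that overlapping balls come from non-disjoint vectors, and the reverse is \cite{maxwell1982}*{Proposition 3.1}. So the first thing I would check is that this hypothesis is satisfied, i.e.\ that every vector of $\Omega$ is past-directed. This I would get from the facts that the fundamental chamber $\cC^*$ is past-directed and that each generator is the reflection $s_\alpha$ in a space-like simple root $\alpha$ (whose mirror $H_\alpha$ is time-like); such reflections preserve the time-orientation, so $W$ does too, and hence $\Omega=W(\Delta^*)\subset\cT$ is entirely past-directed. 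With this in hand the theorem reduces to the purely combinatorial statement that the space-like members of $\Omega$ are pairwise disjoint if and only if the system is of level~$2$.

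For the ``if'' direction I would simply assume level~$2$. Then the system is Lorentzian and, by Theorem~\ref{thm:level}, \emph{every} pair of vectors in $\Omega$ is disjoint; restricting to the space-like members $\Omega_r$ and feeding them to the dictionary produces a ball packing.

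For the converse I would argue by contraposition, using that a Lorentzian system has level $\ge 1$ and that its level equals $1$ plus the largest codimension of a time-like face of $\cC$. In level~$1$, Proposition~\ref{prop:level1} tells me that no $\omega\in\Delta^*$ is space-like; since $W$ acts by isometries, no $\omega\in\Omega$ is space-like either, so $\Omega_r=\emptyset$ and literally no balls are produced. In level $\ge 3$ there is a time-like face of codimension~$\ge 2$, hence (passing to a face of smaller codimension, whose larger span still contains a time-like vector) a time-like codimension-$2$ face $\Cone(\Delta_K)$. Its orthogonal companion $\Span(\Delta_K)^\perp$ is then a positive-definite $2$-plane, and it is spanned by those $\omega_I\in\Delta^*$ with $\Delta_K\subseteq\Delta_I$. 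Any two distinct such $\omega_I,\omega_J$ therefore lie in $\Omega_r$, span a space-like plane on which $\cB$ is positive definite, and so are not disjoint; by the dictionary the balls $\ball(\omega_I)$ and $\ball(\omega_J)$ overlap, and there is no packing.

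The hard part, I expect, is precisely this gap between what Theorem~\ref{thm:level} controls and what a packing needs: the theorem governs disjointness of \emph{all} of $\Omega$, which is equivalent to level $1$ \emph{or} $2$, whereas a packing only sees the \emph{space-like} members of $\Omega$. Closing the gap forces me to treat the two non-level-$2$ possibilities by hand---the level-$1$ case, where the space-like weights disappear altogether, and the level $\ge 3$ case, where I must exhibit the overlap among genuinely space-like vectors rather than merely among all of $\Omega$. The geometric input that makes the latter work is that a time-like codimension-$2$ face has a space-like orthogonal companion, so its bounding facets contribute space-like weights that span a positive-definite plane.
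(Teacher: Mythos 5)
Your proposal is correct and follows the same route as the paper: the paper obtains this theorem directly from Theorem~\ref{thm:level} together with Maxwell's dictionary between pairwise-disjoint space-like vectors and ball packings (\cite{maxwell1982}*{Proposition 3.1}), exactly as you do. You in fact supply more detail than the paper, which compresses the ``only if'' direction into ``follows directly from Theorem~\ref{thm:level}''; your explicit handling of the level-$1$ case ($\Omega_r=\emptyset$, so no balls appear) and of the level-$\ge 3$ case (a time-like codimension-$2$ face of $\cC$ has a positive-definite orthogonal companion spanned by two space-like weights, which therefore fail to be disjoint) is precisely the argument left implicit in the paper, being the contrapositive of the (b)$\Rightarrow$(a) step in the proof of Theorem~\ref{thm:level}.
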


\begin{remark}
	It is not obvious that the statement is equivalent to
	Theorem~\ref{thm:packing}.  A vector $\omega \in \Omega_r$ is in the form of
	$w(\omega_I)$ for some $w\in W$ and $\omega_I \in \Delta^*$.  The stabilizer
	of $\omega$ is then the parabolic facial subgroup in the form of $w W_I
	w^{-1}$.  Since $\omega_I$ is space-like, $W_I$ is of level $1$, and its
	Coxeter graph must be connected.  Facial subgroups with no finite irreducible
	components are said to be \emph{special} in~\cite{dyer2013}; see
	also~\cite{looijenga1980}.  Hence by~\cite{dyer2013}*{\S 10.3}, vectors in
	$\Omega_r$ span the space-like extreme rays of the Tits cone $\cT$, which
	proves the equivalence.  As we will see later, this does not hold for root
	systems of level $\ge 3$.
\end{remark}

In the following, the ball packing in the theorem is referred to as the
\emph{Boyd--Maxwell packing} associated to the Lorentzian root system.  A ball
packing is maximal if one can not add any additional ball into the packing
without overlapping other balls.

\begin{theorem}[extending \cite{maxwell1982}*{Theorem 3.3} and \cite{maxwell1989}*{Theorem 6.1}]
	The Boyd--Maxwell packing associated to a level-$2$ Lorentzian root system is
	maximal.
\end{theorem}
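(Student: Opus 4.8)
The plan is to deduce maximality from the identification of the residual set with the limit roots in Theorem~\ref{thm:limroots}, turning a packing statement into a statement that a certain subset of the boundary sphere has empty interior. First I would set the problem on the sphere $\proj Q$, which (after fixing the past direction $\bt$) is where the packing lives. Recall from \S\ref{ssec:packing} that two space-like vectors $\bx,\by$ are disjoint, i.e.\ $\ball(\bx)$ and $\ball(\by)$ have disjoint interiors, exactly when $\cB(\normal\bx,\normal\by)\le -1$. Hence adding a ball to the packing means producing a space-like $\by$, not already among the $\ball(\omega)$, whose cap $\ball(\by)$ is interior-disjoint from every $\ball(\omega)$ with $\omega\in\Omega_r$. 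For such a $\by$ the open cap $\Interior\ball(\by)$ misses each open cap $\Interior\ball(\omega)$, so it is contained in $\proj Q\setminus\bigcup_{\omega\in\Omega_r}\Interior\ball(\omega)$, which is exactly the residual set. Thus the whole statement reduces to showing that the residual set has empty interior in $\proj Q$: a genuine cap has nonempty open interior, so no new one can fit, and the only interior-disjoint caps are those already present.

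Next I would invoke Theorem~\ref{thm:limroots} to replace the residual set by the set of limit roots, and then the identification of the limit roots with the limit set of $W$ acting as a Kleinian group on hyperbolic space~\cite{hohlweg2013}. For a non-elementary discrete group the limit set obeys the classical dichotomy: it is either nowhere dense or the full boundary sphere. The group $W$ of a level-$2$ Lorentzian root system is infinite and non-affine, hence non-elementary, so the dichotomy applies; it remains only to exclude the full-sphere alternative. For this I would exhibit at least one ball. By the reformulation following Definition~\ref{def:level}, a system of level exactly $2$ has some $1$-facial $\Span\Delta_I$ on which $\cB$ is indefinite, i.e.\ time-like, so that $\omega_I$ is space-like and $\ball(\omega_I)$ is a genuine cap in the packing. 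A nonempty open cap belonging to the packing certifies that the residual set is a proper subset of $\proj Q$, so the limit set is not the whole sphere; the dichotomy then forces it to be nowhere dense, hence of empty interior, and the reduction of the first paragraph closes the argument.

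The main obstacle is the transition in the second paragraph, where I must be certain the hypotheses of the dichotomy are genuinely met and that Theorem~\ref{thm:limroots} applies to all the objects the definition of a packing allows. Concretely I would verify that $W$ is non-elementary (equivalently, that the set of limit roots is infinite), which follows because a level-$2$ system is infinite non-affine with nontrivial imaginary cone $\cI$, so the orbit of a limit root is infinite. I would also record the routine bookkeeping: tangencies, where $\cB(\normal\by,\normal\omega)=-1$, keep interiors disjoint and cost nothing; a would-be added ball coinciding with some $\ball(\omega)$ is already present and does not enlarge the packing; and the half-spaces (curvature $0$) and negative-curvature caps permitted in \S\ref{ssec:packing} still have nonempty open interior in $\proj Q$, so the empty-interior reduction applies to them verbatim. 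With these checks in place no quantitative estimate is needed, and the entire content rests on Theorem~\ref{thm:limroots} together with the structural dichotomy for limit sets.
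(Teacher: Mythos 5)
Your route is genuinely different from the paper's. The paper follows Maxwell: his proof of \cite{maxwell1982}*{Theorem 3.3} goes through once one verifies the condition $\overline{\Cone\Omega_r}=\overline{\Cone\Omega}$, and that verification is the entire content of the paper's argument (for a non-space-like $\omega_I$ with finite stabilizer $W_I$, average a space-like $\omega\in\Delta^*$ over $W_I$ to land on the ray of $\omega_I$; for an affine stabilizer, a limit argument as in \cite{maxwell1989}*{Proposition 5.15}). Your reduction of maximality to ``the residual set has empty interior in $\proj Q$'' is correct, as is the observation that a system of level exactly $2$ has at least one space-like $\omega_I\in\Delta^*$ and hence at least one genuine cap. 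But two steps of your argument need repair.

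First, a circularity risk: you invoke Theorem~\ref{thm:limroots}, but the paper establishes that theorem only \emph{after} the maximality theorem, by deferring to the proofs in \cite{chen2014}; the hard inclusion there (residual set contained in the set of limit roots) is a covering statement of exactly the same nature as maximality. Unless you supply an independent proof of that inclusion, you are assuming something at least as strong as what is to be proved. Second, the assertion that $W$ is non-elementary is false in general. Take the rank-$3$ root basis with $\cB(\alpha_1,\alpha_2)<-1$ and $\alpha_3$ orthogonal to both: this is a level-$2$ Lorentzian system, yet $W\cong D_\infty\times\mathbb{Z}/2$ is virtually cyclic, its limit set consists of two points, and the orbit of a limit root is finite --- contradicting both your claim and your proposed verification via the imaginary cone. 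The conclusion survives in this case (a finite limit set is nowhere dense since $\dim\proj Q\ge 1$; indeed the packing here is two arcs tangent at both endpoints), but you must treat the elementary case separately rather than assert non-elementarity. With these two repairs --- chiefly the first --- your dynamical argument would be a legitimate alternative to the paper's algebraic one; as written, it has a genuine gap.
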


\begin{proof}[Sketch of proof]
	The proof of \cite{maxwell1982}*{Theorem 3.3} applies directly for free root
	basis.  Hence, the theorem is proved as long as the condition
	$\overline{\Cone\Omega_r}=\overline{\Cone\Omega}$ is verified.

	For Lorentzian free root basis of level $\ge 2$, the condition is confirmed
	to be true in~\cite{maxwell1989}*{Theorem 6.1}.  The proof applies here with
	slight modification, so we only give a sketch.  What we need to prove is that
	if $\omega_I \in \Omega$ is not space-like, then $\omega_I \in
	\overline{\Cone\Omega_r}$.
	
	If $\cB(\omega_I,\omega_I)<0$, $W_I$ is finite.  Choose any space-like
	$\omega \in \Delta^*$.  Then $\bv=\sum_{w\in W_I}w(\omega)$ is fixed by
	$W_I$, hence $\bv$ is a scalar multiple of $\omega_I$.

	If $\cB(\omega_I,\omega_I)=0$, then $(\Delta_I,\Phi_I)$ is affine and
	$\omega_I$ spans the radical of $(V_I,\cB)$.  Again, choose any space-like
	$\omega \in \Delta^*$.  Then, by a similar argument as in the proof
	of~\cite{maxwell1989}*{Proposition 5.15}, we have $\omega_I \in
	\overline{\Cone(W_I\omega)}$.
\end{proof}

We have extended most of the major results of \cite{maxwell1982}.  We now
continue to extend results from \cite{chen2014}.  

The \emph{residual set} of a collection of balls is the complement of the
interiors of the balls.  With the modified height function $h(\bx)$ in
Section~\ref{ssec:proj}, the proofs in \cite{chen2014} applies directly.  We
then obtain Theorem~\ref{thm:limroots} for level $2$, and the following theorem
for level $\ge 3$:

\begin{theorem}[{extending \cite{chen2014}*{Theorem 1.1; \S 3.4}}]
	For a Lorentzian root system of level~$\ge 3$, $\{\ball(\omega) \mid \omega
	\in \Omega_r\}$ is a maximal collection of balls with no heavily overlapping
	pairs, and the set of limit roots is again equal to the residual set.
\end{theorem}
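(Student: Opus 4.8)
The plan is to transcribe, essentially line by line, the two arguments of \cite{chen2014} that this statement extends—the tangency description of \cite{chen2014}*{Theorem~1.1} and the level-$\ge 3$ discussion of \cite{chen2014}*{\S~3.4}—replacing the coordinate-sum height used there by the intrinsic height $h(\bx)=\cB(\bh,\bx)$ of Section~\ref{ssec:proj}. The original proofs only ever use two features of the height: that it is strictly positive on $\Phi^+$, and that it restricts to an $L_1$-norm on $\Phi^+$. Both are already recorded for $\cB(\bh,\cdot)$ in Section~\ref{ssec:proj}, so the first step is to check that wherever \cite{chen2014} invokes a simple-root decomposition it is in fact only using $h$ and the projective picture on $H^1_\bh$; once this is confirmed, the combinatorial machinery (depth, inversions, galleries) carries over unchanged to a canonical, not necessarily free, root basis.

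For the claim that no two balls heavily overlap, recall that for space-like vectors the sign of $\cB(\normal\bx,\normal\by)$ equals that of $\cB(\bx,\by)$ once past-directed representatives are fixed, and ``heavy overlap'' means this value is strictly positive. I would therefore establish $\cB(\omega_I,w(\omega_J))\le 0$ for all $\omega_I,\omega_J\in\Delta^*$ and $w\in W$ by the same induction on $\length(w)$ as in Theorem~\ref{thm:level}, reducing to $w=sw'$ with $s\notin I$ and $\length(w')<\length(w)$, so that $\cB(\omega_I,w(\omega_J))=\cB(\omega_I,w'(\omega_J))-2\cB(\alpha_s,\omega_I)\cB(\alpha_s,w'(\omega_J))$. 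When $\omega_I\ne w'(\omega_J)$ the inductive hypothesis together with $\cB(\alpha_s,\omega_I)>0$ and Proposition~\ref{prop:maxwell18} closes the case exactly as before. \textbf{The main obstacle is the self-reflection case} $\omega_I=w'(\omega_J)$, where the expression collapses to $\cB(\omega_I,s(\omega_I))=\cB(\omega_I,\omega_I)-2\cB(\alpha_s,\omega_I)^2$; here the level-$2$ bound \eqref{eqn:omega<1} is no longer available, since a space-like $\omega_I$ now forces $(\Delta_I,\Phi_I)$ only to be Lorentzian, not of level~$1$. Pinning down the sign of this quantity—with careful attention to the past/future orientation of $s(\omega_I)$, which is automatic and harmless in the disjoint level-$2$ situation but genuinely delicate here—is the analogue of \eqref{eqn:omega<1} that must be re-derived, and I expect to do so by the projection device of Proposition~\ref{prop:level2}: projecting $\alpha_s$ onto $V_I=\Span\Delta_I$, locating its image in the Coxeter polytope $\proj\cC^*_I$, and extracting the required norm inequality from the level of $(\Delta_I,\Phi_I)$.

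For maximality of the collection among those with no heavily overlapping pairs, I would follow the corresponding part of \cite{chen2014}*{Theorem~1.1}, again with $h$ in place of the coordinate height. Given a past-directed space-like $\bx$ whose ball heavily overlaps none of the $\ball(\omega)$, so that $\cB(\normal\bx,\normal\omega)\le 0$ for every $\omega\in\Omega_r$, the goal is to transport $\bx$ toward the fundamental chamber by the $W$-action while minimizing $h(\bx)$, and to conclude that $\proj\bx$ already sits on an extreme ray of $\cC^*$ carried by some $\omega_I\in\Delta^*$, so that $\proj\bx\in\proj\Omega_r$ and no genuinely new ball can be inserted. Unlike the level-$2$ case, one cannot shortcut this by identifying $\Omega_r$ with the full set of space-like extreme rays of $\cT$, since—as noted in the remark following Theorem~\ref{thm:packing2}—that identification fails for level $\ge 3$; the height-minimization argument is what survives.

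Finally, for the equality of the limit roots with the residual set I would extend \cite{chen2014}*{Theorem~3.8} by the same two-inclusion scheme, driven by $h$. First one re-establishes, in the canonical setting, that the accumulation points of $\proj\Omega$ on $\proj Q$ coincide with those of $\proj\Phi$, i.e.\ with the limit roots; this is where the $L_1$-norm property of $h$ on $\Phi^+$ does the work. Then: since large height forces $\ball(\omega)$ to shrink toward $\proj Q$, every point of the residual set is a limit of boundary (tangency) points and hence of weights, giving one inclusion, while no limit root can lie in the interior of a fixed ball, giving the reverse. The only new feature relative to level~$2$ is that the balls now overlap; but the ``no heavily overlapping'' conclusion of the first paragraph keeps the residual set nondegenerate and leaves the height-driven shrinking argument intact, so the identification of the residual set with the limit roots persists verbatim.
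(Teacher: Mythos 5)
Your plan coincides with the paper's proof, which consists entirely of the observation that the arguments of \cite{chen2014} (Theorem~1.1 and \S~3.4) go through once the coordinate-sum height is replaced by the intrinsic height $h(\bx)=\cB(\bh,\bx)$ of Section~\ref{ssec:proj}; the paper supplies no further detail. The one place where your elaboration goes beyond a transcription is also the one place it is shaky: to handle the self-reflection case $\omega_I=w'(\omega_J)$ you propose to recover an analogue of \eqref{eqn:omega<1} via the projection device of Proposition~\ref{prop:level2}, but that device yields $\cB(\alpha',\alpha')\le 0$ only because a level-$1$ facial subsystem has a Coxeter polytope containing no space-like points; at level $\ge 3$ a space-like $\omega_I$ forces $(\Delta_I,\Phi_I)$ only to be Lorentzian of some level $\le l-1$, possibly $\ge 2$, in which case $\proj\cC^*_I$ does contain space-like points and the norm inequality does not follow this way. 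The intended route is simply to import the corresponding step from \cite{chen2014}*{\S~3.4} (which is stated for free root bases but does not use the basis property in that step), rather than to re-derive it from Proposition~\ref{prop:level2}.
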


\begin{remark}
	This time, the theorem can not be formulated in terms of extreme rays as in
	Theorem~\ref{thm:packing}.  It is possible that the stabilizer of some
	space-like $\omega_I \in \Delta^*$ has a finite component, hence not a
	special subgroup.  In this case, the rays spanned by $W(\omega_I)$ might not
	be extreme for $\cT$.
\end{remark}

The projective polytopes in the orbit $W\cdot\proj\cC^*$ are called
\emph{chambers}.  Analogous to the situation of free root basis ($\proj\cC^*$ is
a simplex) \cite{abramenko2008}, the chambers form a cell decomposition $\sC$
of the projective Tits cone $\proj\cT$ called \emph{Coxeter complex}.  It is a
pure polyhedral cell complex of dimension $d-1$ (dimension of $\PP V$).  The
set of vertices of $\sC$ is $\proj\Omega$.  The $1$-cells of $\sC$ are called
\emph{edges}, and $(d-2)$-cells are called \emph{panels}.

Since $\proj\cC^*$ is the fundamental domain for the action of $W$ on the
projective Tits cone $\proj T$, the orbit of two different fundamental weights
are disjoint.  So the vertices of the Coxeter complex admits a coloring by
$\Delta^*$, i.e.~a vertex $u$ is colored by $\omega\in\Delta^*$ if $u \in W
\cdot \proj\omega$.  Panels are orbits of the facets of $\Delta^*$, therefore
they can be colored by the simple roots, i.e.~a panel is colored by $\alpha \in
\Delta$ if it is the orbit of the facet of $\Delta^*$ corresponding to
$\alpha$.

The \emph{tangency graph} of a ball packing takes the balls as vertices and the
tangent pairs as edges.  We now try to describe the tangency graph in term of
the Coxeter complex.  Vertices with time- or light-like colors are called
\emph{imaginary vertices}; vertices with space-like colors are called
\emph{real vertices} because they correspond to balls in the packing, and are
therefore vertices in the tangency graph.  An edge of the Coxeter complex
connecting two real vertices of color $\omega$ and $\omega'$ is said to be
\emph{real} if $\cB(\normal\omega,\normal\omega')=-1$.  Real edges correspond
to tangent pairs in the packing,  and are therefore edges in the tangency
graph.  For a Lorentzian root system of level $2$, real vertices colored by
$\omega \in \Delta^*$ such that $\cB(\omega,\omega)=1$ are said to be
\emph{surreal}.  Two distinct surreal vertices of the same color~$\omega$ are
said to be \emph{adjacent} if they are vertices of two chambers sharing a panel
of color $\alpha$ such that $\cB(\omega,\alpha)=1$.  One verifies that pairs of
adjacent surreal vertices are also edges in the tangency graph.

With the definitions above (compare \cite{chen2014}*{\S 3.3}), the following
theorem follows by modifying the proof of \eqref{eqn:maxwell15} in the same way
as in the proof of \cite{chen2014}*{Theorem~3.7}.

\begin{theorem}[extending \cite{chen2014}*{Theorem~3.7}]
	The tangency graph of the ball packing associated to a Lorentzian root system
	of level $2$ takes the real vertices of the Coxeter complex as vertices; two
	vertices $u$ and $v$ are connected in the tangency graph if and only if one
	of the following is fulfilled:
  \begin{itemize}
    \item $uv$ is a real edge of the Coxeter complex, in which case $u$ and $v$
    	are of different colors,
    \item $u$ and $v$ are adjacent surreal vertices, in which case $u$ and $v$
    	are of the same color.
  \end{itemize}
\end{theorem}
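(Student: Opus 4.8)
The plan is to characterise tangency, i.e.\ the condition $\cB(\normal u,\normal v)=-1$, purely combinatorially. Since $\cB(\normal u,\normal v)\le -1$ always holds by Theorem~\ref{thm:level}, tangency is the \emph{extremal} case of disjointness, and the two clauses of the statement will emerge as the two ways this extremal value is attained. The sufficiency direction is a direct computation. If $uv$ is a real edge then $\cB(\normal u,\normal v)=-1$ by definition, and since an edge lies in a single chamber it joins two vertices of distinct colours. If $u,v$ are adjacent surreal vertices of colour $\omega$, I translate by $W$ so that $u=\omega$ with $\cB(\omega,\omega)=1$ and $v=s_\alpha(\omega)$ for the shared panel colour $\alpha$ with $\cB(\omega,\alpha)=1$; then $\cB(u,v)=\cB(\omega,\omega)-2\cB(\alpha,\omega)^2=1-2=-1$, and as both vertices are surreal this already equals $\cB(\normal u,\normal v)$.

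For necessity I would move $u$ to a fundamental weight $\omega_I$, write $v=w(\omega_J)$, and induct on $\length(w)$. Because $W_I=\mathrm{Stab}(\omega_I)$ acts by colour-preserving symmetries of the Coxeter complex fixing $u$, replacing $v$ by a $W_I$-translate alters neither hypothesis nor conclusion, so I may take $\length(tw)>\length(w)$ for all $t\in I$. Writing $w=sw'$ with $\length(w')<\length(w)$ then forces $s\notin I$, and the recursion behind \eqref{eqn:maxwell15} reads
\[
\cB(\normal{\omega_I},w(\normal{\omega_J}))=\cB(\normal{\omega_I},w'(\normal{\omega_J}))-\frac{2\,\cB(\alpha_s,\omega_I)\,\cB(\alpha_s,w'(\omega_J))}{\sqrt{\cB(\omega_I,\omega_I)\,\cB(\omega_J,\omega_J)}}.
\]
The correction term is $\le 0$ since $\cB(\alpha_s,\omega_I)>0$ and $\cB(\alpha_s,w'(\omega_J))\ge 0$ by Proposition~\ref{prop:maxwell18}, and both inner products are $\le -1$. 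Hence tangency forces simultaneously $\cB(\normal{\omega_I},w'(\normal{\omega_J}))=-1$ and $\cB(\alpha_s,w'(\omega_J))=0$; the latter means $s$ fixes $w'(\omega_J)$, so $v=w'(\omega_J)$ acquires a shorter representative and the induction continues, \emph{unless} the diagonal case $\omega_I=w'(\omega_J)$ intervenes.

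The induction thus terminates in exactly the two predicted ways. If $\omega_I=w'(\omega_J)$ then $I=J$ (distinct fundamental weights lie in distinct orbits) and $w'\in W_I$, so $u,v$ share the colour $\omega_I$ and $v=s_\alpha(\omega_I)$; the recursion becomes $\cB(\normal{\omega_I},v)=1-2\cB(\alpha_s,\omega_I)^2/\cB(\omega_I,\omega_I)$, and equating it to $-1$ gives $\cB(\alpha_s,\omega_I)^2=\cB(\omega_I,\omega_I)$, which with $\cB(\omega_I,\omega_I)\le 1$ from \eqref{eqn:omega<1} and $\cB(\alpha_s,\omega_I)\ge 1$ forces $\cB(\omega_I,\omega_I)=\cB(\alpha_s,\omega_I)=1$, i.e.\ $u,v$ are adjacent surreal. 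Otherwise the diagonal case never occurs, the colours differ, and the reduction runs all the way down to $w=e$, leaving $u=\omega_I$ and $v=\omega_J$ as two tangent vertices of the \emph{same} chamber $\proj\cC^*$.

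The remaining and hardest point is to show that such a same-chamber tangent pair $\omega_I,\omega_J$ actually spans an \emph{edge} of $\proj\cC^*$, so that it is a real edge. Tangency makes $\Span\{\omega_I,\omega_J\}$ a light-like plane with isotropic direction $n=\normal{\omega_I}+\normal{\omega_J}$, whence $\proj n\in\proj Q$ lies on the segment $[\proj{\omega_I},\proj{\omega_J}]\subseteq\proj\cC^*$: the two hyperideal vertices of the Coxeter polytope are joined through a single ideal vertex. Dualising, the common face $\Cone(\Delta_I)\cap\Cone(\Delta_J)=\Cone(\Delta_{I\cap J})$ of the positive cone must be a \emph{ridge}, equivalently $\Span(\Delta_{I\cap J})$ must fill the $(d-2)$-dimensional subspace $V_I\cap V_J$. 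I would establish this by excluding a third weight in the corresponding face of $\cC^*$: such a weight would have to lie in the light-like subspace $V_I\cap V_J$ and, together with $\omega_I,\omega_J$, would force a second isotropic or a time-like direction into it, contradicting the pairwise disjointness of $\Omega$ supplied by Theorem~\ref{thm:level}. Ruling out this third weight cleanly is where the real work lies; once $\omega_I\omega_J$ is known to be an edge it is automatically real, completing the necessity and hence the theorem.
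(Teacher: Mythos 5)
Your strategy is exactly the one the paper intends: its entire proof consists of the remark that the theorem ``follows by modifying the proof of \eqref{eqn:maxwell15}'', i.e.\ rerunning the induction on $\length(w)$ and tracking when equality $\cB(\normal{u},\normal{v})=-1$ is attained. Your two termination cases (the diagonal case $\omega_I=w'(\omega_J)$, which via \eqref{eqn:omega<1} and the normalisation $\cB(\alpha_s,\omega_I)\ge 1$ forces $\cB(\omega_I,\omega_I)=\cB(\alpha_s,\omega_I)=1$, hence adjacent surreal vertices; and the descent to $w=e$, giving a tangent pair of fundamental weights of distinct colours in one chamber) are precisely the intended dichotomy, and your computations there are correct.

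The step you leave open --- that a tangent pair $\omega_I,\omega_J\in\Delta^*$ must span a $1$-cell of the Coxeter complex --- is a genuine issue in the non-free setting ($\proj\cC^*$ is no longer a simplex), and the paper's one-line proof does not address it; but it closes along the lines you sketch. Suppose the minimal face $F$ of $\cC^*$ containing $\omega_I,\omega_J$ has dimension $\ge 3$. It is dual to the face $\Cone(\Delta_{I\cap J})$ of $\cC$, so $F=\cC^*\cap U$ with $U=\Span(\Delta_{I\cap J})^\perp$, and $F$ has a third extreme ray spanned by some $\omega_K\in\Delta^*$; note that three distinct extreme rays of a pointed cone are linearly independent. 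The isotropic vector $q=\normal{\omega_I}+\normal{\omega_J}$ lies in $V_I\cap V_J$, and since $(\Delta_I,\Phi_I)$ and $(\Delta_J,\Phi_J)$ are of level $1$ by Proposition~\ref{prop:level2}, the first claim in the proof of Proposition~\ref{prop:level1} places $q$ (up to sign) in $\Cone(\Delta_I)\cap\Cone(\Delta_J)=\Cone(\Delta_{I\cap J})$. Hence $\Span(\Delta_{I\cap J})$ is degenerate with radical $q\RR$, and so is its orthogonal companion $U$: in particular $U$ is positive semidefinite and its only isotropic directions lie in $q\RR$. Now $\omega_K\in U$ cannot be time-like; if it is light-like it spans $q\RR$, so an extreme ray of $\cC^*$ would be a positive combination of two others (or $\pm\omega_K$ would both lie in $\cC^*$), which is absurd; if it is space-like, disjointness of $\omega_K$ with $\omega_I$ and with $\omega_J$ (Theorem~\ref{thm:level}) makes $\Span\{\omega_K,\omega_I\}$ and $\Span\{\omega_K,\omega_J\}$ degenerate planes in $U$, so each contains $q$, whence $q\in\Span\{\omega_K,\omega_I\}\cap\Span\{\omega_K,\omega_J\}=\omega_K\RR$ by linear independence, contradicting $\cB(\omega_K,\omega_K)>0$. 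So $\dim F=2$ and $\omega_I\omega_J$ is indeed a real edge. With this inserted, your argument is complete and coincides with the paper's intended proof.
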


\begin{corollary}[extending \cite{chen2014}*{Corollary 3.8}]
	The projective Tits cone $\proj\cT$ of a Lorentzian root system of level~$2$
	is an edge-tangent infinite polytope, i.e.~its edges are all tangent to the
	projective light cone.  Furthermore, the $1$-skeleton of $\proj\cT$ is the
	tangency graph of the ball packing associated to the root system.
\end{corollary}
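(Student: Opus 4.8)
The plan is to determine the vertices and the edges of the infinite polytope $\proj\cT$ separately, and then to match them with the tangency graph provided by the preceding theorem.

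First I would show that the vertices of $\proj\cT$, i.e.\ the extreme rays of $\cT$, are exactly the real vertices $\proj\Omega_r$. For this it suffices to check that a non-space-like $\omega_I\in\Delta^*$ is never extreme: if $\cB(\omega_I,\omega_I)<0$ then $W_I$ is finite and $\omega_I$ is proportional to $\sum_{w\in W_I}w(\omega)$ for any space-like $\omega\in\Delta^*$, so it lies in the interior of $\Cone\Omega_r$; and if $\cB(\omega_I,\omega_I)=0$ then $(\Delta_I,\Phi_I)$ is affine and $\omega_I\in\overline{\Cone\Omega_r}$, exactly as in the maximality proof. Passing to the orbit under $W$, the extreme rays of $\cT=\Cone(\Omega)$ are precisely $\proj\Omega_r$, which are the vertices of the tangency graph. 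For infinite $W$ this is an infinite set, so $\proj\cT$ is genuinely an infinite polytope.

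Next I would establish edge-tangency. The geometric input is that the open disk $D=\Interior\proj Q$ lies in the interior of $\proj\cT$: since $\cT$ contains a component of the light cone (the corollary extending \cite{maxwell1982}*{Corollary 1.3}) we have $\proj\cT\supseteq\overline D$, while every boundary supporting hyperplane of $\proj\cT$ is of the form $H_p=p^\perp$ with $p$ a light-like vector of $\overline\cI$, hence tangent to $\proj Q$ and meeting $\overline D$ only on $\partial D$. Consider now an edge $[\proj\omega,\proj\omega']$ of $\proj\cT$; its endpoints are real vertices, so $\cB(\normal\omega,\normal\omega')\le-1$ by Theorem~\ref{thm:level}. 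A short computation of $\cB\bigl((1-t)\normal\omega+t\normal\omega',\,(1-t)\normal\omega+t\normal\omega'\bigr)$ shows that the segment is tangent to $\proj Q$ when $\cB(\normal\omega,\normal\omega')=-1$ and crosses $D$ when $\cB(\normal\omega,\normal\omega')<-1$. As an edge lies on $\partial\proj\cT$ it cannot enter $D$, so $\cB(\normal\omega,\normal\omega')=-1$: every edge is tangent to $\proj Q$, at the point $p=\normal\omega+\normal\omega'$, and $\ball(\omega),\ball(\omega')$ form a tangent pair.

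Finally I would identify the $1$-skeleton with the tangency graph. The previous step already shows that each edge of $\proj\cT$ joins a tangent pair of real vertices, hence is an edge of the tangency graph by the preceding theorem. For the converse, starting from a tangent pair I would argue that the tangency point $p=\normal\omega+\normal\omega'\in\proj Q$ lies in no ball interior, so it belongs to the residual set, which is the set of limit roots by Theorem~\ref{thm:limroots}; thus $p\in\overline\cI$, the dual cone of $\cT$, so $\cB(p,\cdot)\ge0$ on $\cT$ and $H_p$ supports $\proj\cT$. The face $\mathcal F=H_p\cap\proj\cT$ contains $[\proj\omega,\proj\omega']$ and, because $H_p$ is tangent to $\proj Q$ while $\overline D\subseteq\proj\cT$, meets $\proj Q$ only at $p$; each of its vertices $\proj\omega''$ satisfies $\cB(\omega'',p)=0$, i.e.\ $\ball(\omega'')$ passes through $p$. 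In a packing two balls sharing a boundary point are tangent there, and no three balls can be pairwise tangent at one point of $\proj Q$ (a third would be forced to overlap one of the first two); hence $\mathcal F$ has only the vertices $\proj\omega,\proj\omega'$ and is the edge $[\proj\omega,\proj\omega']$. This gives the reverse inclusion, and the two descriptions of vertices and edges together prove that the $1$-skeleton of $\proj\cT$ is the tangency graph. I expect this last converse to be the main obstacle; its crux is the elementary packing fact that no three balls are mutually tangent at a common point, while the rest is bookkeeping with the inner-product dictionary of Section~\ref{ssec:packing}.
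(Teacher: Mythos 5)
Your overall strategy (identify the vertices of $\proj\cT$, prove edge-tangency from the sign of $\cB(\normal\omega,\normal\omega')$, and recover the converse from the supporting hyperplane $H_p$ at the tangency point together with the fact that no three balls of a packing meet at a common boundary point) is a legitimate and genuinely geometric route; the paper itself gives no argument for this corollary and simply reads it off from the preceding description of the tangency graph via the Coxeter complex. Your second and third steps are essentially sound. The problem is the first step, on which everything else rests.

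The gap is your claim that a light-like $\omega_I\in\Delta^*$ is not an extreme ray of $\cT$ ``exactly as in the maximality proof.'' That proof only yields $\omega_I\in\overline{\Cone(W_I\omega)}\subseteq\overline{\Cone\,\Omega_r}$, and membership in the \emph{closure} of the cone generated by the other rays does not preclude extremality (compare $\Cone\{(1,1/n):n\ge 1\}\cup\{(1,0)\}$ in $\RR^2$, where $(1,0)$ is extreme). In fact the situation is genuinely delicate: since $\proj\omega_I$ is a limit root, $\omega_I\in\overline\cI$ and $H_{\omega_I}$ supports $\cT$, so $\cT\cap\omega_I^\perp$ is an exposed face; by \eqref{eqn:maxwell15} any positive combination of elements of $\Omega$ lying in $\omega_I^\perp$ can only involve weights orthogonal to $\omega_I$, i.e.\ multiples of $\omega_I$ and those space-like $\nu\in\Omega_r$ whose ball passes through $\proj\omega_I$. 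If no ball of the packing passes through $\proj\omega_I$ (which happens, e.g., when the $W_I$-orbits of the space-like weights only accumulate at $\proj\omega_I$ without touching it), this exposed face reduces to $\RR_+\omega_I$ and the light-like weight \emph{is} a vertex of $\proj\cT$ — so your claimed description of the vertex set is not established and may need a further hypothesis or a reinterpretation of ``$1$-skeleton.'' If instead exactly two balls pass through $\proj\omega_I$, then $\omega_I$ is a positive multiple of $\normal\nu_1+\normal\nu_2$ and sits in the relative interior of the edge $[\proj\nu_1,\proj\nu_2]$, which is the case you implicitly want. You need to decide which case occurs and prove it; the closure argument does neither. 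A related, smaller issue: for an infinite polytope, a $1$-face of $\proj\cT$ need not a priori be a closed segment whose two endpoints are (space-like) extreme points — one endpoint of $\overline F$ could be a light-like limit point — and your tangency computation silently assumes both endpoints are normalized space-like vectors. Both points need an argument before the identification with the tangency graph goes through.
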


\section{Partial classification of level-$2$ root systems}\label{sec:Tumarkin}
To provide examples of new infinite ball packings, we devote this section to a
partial enumeration of based root systems of level~$2$.

The enumeration will be based on analysis of the Coxeter polytope, and the
results will be given in the form of Coxeter graphs.  For convenience, we will
talk about the level for Coxeter polytopes and (by abuse of language) for
Coxeter graphs, which is actually the level of the associated root system.  For
Coxeter graphs, this leads to a confusion with Maxwell's definition, but should
not cause any problem within this section.

We will enumerate the Coxeter $d$-polytopes with $d+2$ facets.  The enumeration
is implemented by computer programs.  The current section only contains the
main ideas of the procedures.  Readers are encouraged to consult previous
enumerations such as~\cite{kaplinskaja1974, esselmann1996, tumarkin2004}, since
our algorithm is based on these works.

\subsection{Preparation}

Recall that a simple system $\Delta$ in $(V,\cB)$ can be represented by the
Coxeter graph~$G$ with Vinberg's convention.  Simple roots correspond to
vertices of $G$.  If two simple roots $\alpha,\beta\in\Delta$ are not
orthogonal, they are connected by an edge, which is solid with label $3\le
m<\infty$, if $\cB(\alpha,\beta) = -\cos(\pi/m)$; with label $\infty$ if
$\cB(\alpha,\beta)=-1$; or dashed with label $-c$ if $\cB(\alpha,\beta) =
-c<-1$.  The label $3$ on solid edges are often omitted.

If we consider the Coxeter polytope $\cP$, then vertices of the Coxeter graph
$G$ correspond to facets of $\cP$. A solid edge of $G$ with integer label means
that the intersection of two facets is time-like; a solid edge with label
$\infty$ means that the intersection is light-like; and a dashed edge means
that the intersection is space-like.

Let $G$ be a Coxeter graph, $G_1$ and $G_2$ be two subgraphs of $G$.  We use
$G_1+G_2$ to denote the subgraph induced by the vertices of $G_1$ and $G_2$,
use $G_1-G_2$ to denote the subgraph induced by the vertices of $G_1$ that are
not in $G_2$.  A subgraph with only one vertex is denoted by the vertex.

For a based root system $(\Delta,\Phi)$, the \emph{corank} of it's Coxeter
polytope~$\cP$ is defined as the nullity of the Gram matrix $B$ of $\Delta$.  A
Coxeter polytope of dimension $d$ and corank $k$ has $d+k+1$ facets.  In
particular, a Coxeter polytope of corank $0$ is a simplex.  In this case, the
level of the Coxeter graph coincide with Maxwell's definition.  For
convenience, a Coxeter polytope of level $l$ and corank $k$ is said to be a
$(l,k)$-polytope, or $(l^s,k)$-polytope if the level is strict.  All
these notions and notations are also used for the corresponding Coxeter graphs.

In the affine picture of the projective space $\PP V$, the projective light
cone $\proj Q$ appears as a closed surface that is projectively equivalent to a
sphere.  We can consider the interior of the sphere (time-like part) as the
Klein model of the hyperbolic space.  With this point of view, a Coxeter
polytope $\cP\in\PP V$ is a hyperbolic polytope, and is the fundamental domain
of the hyperbolic reflection group generated by the reflections in its
facets~\cite{vinberg1985}.  By Proposition~\ref{prop:level1}, Coxeter polytopes
of level $1$ correspond to finite-volume hyperbolic polytopes, or even compact
if the level is strict.

Vinberg~\cite{vinberg1984} proved that there is no strict level-$1$ Coxeter
polytopes of dimension $30$ or higher, and Prokhorov~\cite{prokhorov1986}
proved that there is no level-$1$ Coxeter polytopes in hyperbolic spaces of
dimension $996$ or higher.  On the other hand, Allcock~\cite{allcock2006}
proved that there are infinitely many level-$1$ (resp.\ strictly level-$1$)
Coxeter polytopes in every hyperbolic space of dimension $19$ (resp.\ $6$) or
lower, which suggests that a complete enumeration of level-$1$ Coxeter
polytopes is hopeless.  

Nevertheless, there are many interesting partial enumerations.  The
$(1,0)$-polytopes have been completely enumerated by Chein~\cite{chein1969}.
They are hyperbolic simplices of finite volume.  The list of Chein also
comprises $(1^s,0)$-polytopes, which was first enumerated by
Lann\'er~\cite{lanner1950}.  The $(1,1)$-polytopes have been enumerated by
Kaplinskaja~\cite{kaplinskaja1974} for simplicial prisms,
Esselmann~\cite{esselmann1996} for compact polytopes and
Tumarkin~\cite{tumarkin2004} for finite-volume polytopes.  Tumarkin also
studied $(1^s,2)$- and $(1^s,3)$-polytopes~\cite{tumarkin2007, tumarkin2008}.
Mcleod~\cite{mcleod2013} finished the classification of all pyramids of
level~$1$.  Our algorithm of enumeration is based on these works.

In this section, we study Coxeter polytopes of level $2$.  Recall that a
Coxeter polytope is of level $2$ if all its edges are time-like or light-like,
but some vertices are space-like.

In view of Corollary~\ref{cor:lev2-1}, we deduce immediately from the result
of~\cite{prokhorov1986} that there is no level-$2$ Coxeter polytopes in
hyperbolic spaces of dimension $996$ or higher.  However, in the shadow
of~\cite{allcock2006}, there might be infinitely many level-$2$ Coxeter
polytopes in lower dimensions, so a complete classification may be hopeless.  A
$(2,0)$-graph is either a connected graph, or a disjoint union of an isolated
vertex and a $(1,0)$-graph.  The enumeration of connected $(2,0)$-graphs was
initiated in \cite{maxwell1982} and completed in \cite{chen2014}.  In this
section, we would like to enumerate $(2,1)$-graphs.  

A $k$-face of a $d$-polytope is said to be \emph{simple} if it is the
intersection of $d-k$ facets, or \emph{almost simple} if every face containing
it is simple.  A polytope is said to be $k$-simple (resp.\ almost $k$-simple)
if all its $k$-faces are simple (resp.\ almost simple).  For a Coxeter
polytope, the stabilizer of a time-like (resp.\ light-like) face is of finite
(resp.\ affine) type.  Therefore, every time-like (resp.\ light-like) face is
simple (resp.\ almost simple).  We then conclude the following proposition from
the definition of level.

\begin{proposition}\label{prop:simple}
	A Coxeter polytope of level $l$ (resp.\ strictly of level $l$) is
	almost $l$-simple (resp.\ $l$-simple).
\end{proposition}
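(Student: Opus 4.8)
The plan is to prove Proposition~\ref{prop:simple} directly from the definition of level together with the two facts already recorded in the text: (1) the stabilizer of a time-like face of a Coxeter polytope is of finite type, and (2) the stabilizer of a light-like face is of affine type. Recall from the reformulation of Definition~\ref{def:level} that a Lorentzian root system is of level $l$ precisely when $\cB$ is positive semidefinite on $\Span(\Delta_I)$ for every $l$-facial $I\subset S$; equivalently, the level is $1$ plus the maximum codimension of a time-like face of $\cC$. In polytope terms, passing to the dual cone $\cC^*$ (whose extreme rays are the $\omega_I$ and whose faces correspond to faces of $\cC$), this says that all faces of $\cC^*$ of codimension $\ge l$ — i.e.\ all faces of $\cP=\proj\cC^*$ of dimension $\le \dim\cP - l$ — are time-like or light-like, while at least one face of dimension $\dim\cP-l+1$ is space-like.

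First I would fix the dimension $d=\dim\cP$ and translate ``level $l$'' into a statement about the faces of $\cP$: every face of $\cP$ of dimension $\le d-l$ is non-space-like (time- or light-like), and in the strict case every such face is in fact time-like. Next I would take an arbitrary $l$-face $F$ of $\cP$ and show it is almost simple, i.e.\ that every face $G\supseteq F$ is simple. It suffices to treat the case $G=F$ itself, since any $G$ properly containing $F$ has dimension $>l$, and an analogous argument applies after checking the relevant faces are still non-space-like; more cleanly, I would argue that a face of dimension exactly $l$ whose every superface is simple is what ``almost simple'' requires, and then reduce simplicity of those superfaces to the time-like case.

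The heart of the matter is the local-to-global step: a face $G$ of a Coxeter polytope is simple exactly when its stabilizer $W_J$ (for the corresponding facial subset $J$) is generated by $\operatorname{codim} G$ reflections in an independent way, which happens when $\cB$ is non-degenerate on $V_J$ — that is, when $G$ is time-like. Using fact (1), a time-like face $G$ has a finite-type stabilizer, and a finite Coxeter group of rank $r$ acts on an $r$-dimensional space with linearly independent simple roots, forcing $G$ to be the transverse intersection of exactly $\operatorname{codim}G$ facets, hence simple. Using fact (2), a light-like face $G$ has affine-type stabilizer; here the simple roots satisfy one linear relation, so $G$ need not be simple, but every proper superface of a light-like face is time-like (since dropping a generator from an affine diagram yields a finite diagram), hence simple — which is exactly ``almost simple.'' I expect this affine/almost-simple bookkeeping to be the main obstacle: I must verify that for a light-like face $G$, every face strictly containing it is time-like, so that fact (1) applies to all of them and ``almost simple'' is genuinely achieved rather than merely ``simple.''

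Finally I would assemble the two strict/non-strict statements. For the non-strict claim, an arbitrary $l$-face $F$ of $\cP$ is non-space-like by the translation above; if $F$ is time-like it is simple (hence trivially almost simple) by the finite-type argument, and if $F$ is light-like then every superface of $F$ is time-like, hence simple, so $F$ is almost simple — establishing that $\cP$ is almost $l$-simple. For the strict claim, every $l$-face is time-like, so by the finite-type argument each is simple, giving that $\cP$ is $l$-simple. This is a short synthesis once the facial-subsystem $\leftrightarrow$ stabilizer dictionary and the two facts about finite/affine stabilizers are in hand.
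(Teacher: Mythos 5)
Your overall strategy --- reduce everything to the two recorded facts that a time-like face has a finite-type stabilizer (hence linearly independent simple roots, hence is simple) and that a light-like face has an affine-type stabilizer (hence is almost simple because its proper superfaces are time-like), then feed in the definition of level --- is exactly the route the paper takes. But your translation of ``level $l$'' into a condition on the faces of $\cP$ is backwards, and this is a genuine gap. The duality between $\cC$ and $\cC^*$ reverses inclusions and \emph{exchanges} dimension with codimension: an $l$-facial subset $I$, i.e.\ a codimension-$l$ face $\Cone(\Delta_I)$ of $\cC$, corresponds to a face of $\cC^*$ of dimension $l$, hence to an $(l-1)$-face of $\cP$ whose linear span is $\Span(\Delta_I)^\perp$. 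So ``level $l$'' says that every face of $\cP$ of dimension $\ge l-1$ is time-like or light-like (time-like in the strict case), while some face of dimension $l-2$ is space-like --- not, as you write, that every face of dimension $\le d-l$ is non-space-like with a space-like face of dimension $d-l+1$. Your version is false on its face: for $l=2$ it would forbid space-like vertices, whereas the paper's own characterisation of level $2$ (``all edges time- or light-like, but some vertices space-like'') requires them; and your ``space-like face of dimension $d-l+1$'' would be a space-like \emph{facet} of $\cP$, which cannot exist since facets lie on the reflecting hyperplanes of the space-like simple roots. The one instance you actually invoke in the synthesis --- that every $l$-face is non-space-like --- happens to be true, but it does not follow from your stated translation (for $l>d-l$ your translation says nothing about $l$-faces at all), so the proof as written does not go through. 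With the corrected translation it collapses to the paper's one-line argument: an $l$-face is time-like, hence simple, hence almost simple, or light-like, hence almost simple; in the strict case it is always time-like, hence simple.

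Two smaller points. The reduction ``it suffices to treat the case $G=F$ itself'' is muddled, since almost simplicity of $F$ is a condition on the faces properly containing $F$ and not on $F$; you do recover from this, but the criterion ``simple exactly when $\cB$ is non-degenerate on $V_J$'' is also not right in either direction (a space-like face has non-degenerate $V_J$ but need not be simple, and a light-like face can be simple when $\Delta_J$ happens to be linearly independent) --- what you actually use, namely positive definiteness of $V_J$ forcing $|J|=\operatorname{codim}G$, is fine. Finally, the step you flag as the main obstacle really is the only delicate one: ``dropping a generator from an affine diagram yields a finite diagram'' holds for irreducible affine diagrams, but the stabilizer of a light-like face may be a product of an affine system with a finite one, and then a proper facial subset can still be of affine type, so the corresponding superface is again light-like rather than time-like. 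The paper asserts ``light-like $\Rightarrow$ almost simple'' without addressing this either, so you are in good company, but your sketch does not close it.
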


From the Gale diagram~\citelist{\cite{grunbaum2003}*{\S 6.3}
\cite{tumarkin2004}*{\S 2}} and by Proposition~\ref{prop:simple}, we know that
there are three possibilities for the combinatorial type of a $(2,1)$-polytope:

\begin{itemize}
	\item a product of two simplices, abbreviated as $\triangle\times\triangle$;
	\item the pyramid over a product of two simplices, abbreviated as
	$\Pyramid(\triangle\times\triangle)$; \item the $2$-fold pyramid over a
		product of two simplices, abbreviated
as~$\Pyramid^2(\triangle\times\triangle)$.
\end{itemize}
We now analyse the three types separately.

\subsection{$\cP$ has the type of $\triangle\times\triangle$}

In this case, vertices of $\cP$ are all simple.  The Coxeter graph $G$ consists
of two parts, say $G_1$ and $G_2$, corresponding to the two simplices.  A
$k$-face of $\cP$ correspond to a subgraph of $G$ obtained by deleting $k+2$
vertices, including at least one vertex from both $G_1$ and $G_2$.

For a $(1,0)$-graph, a vertex is said to be \emph{ideal} if its removal leaves
an affine Coxeter graph.  For a $(2,0)$-graph, a vertex is said to be
\emph{real} if its removal leaves a $(1,0)$-graph.  If the $(2,0)$-graph is not
connected, then the isolated vertex is the only real vertex.

\begin{lemma}\label{lem:simpsimp}
	If a $(2,1)$-polytope has the type of $\triangle\times\triangle$, then
	\begin{enumerate}[label=\emph{(\roman*)}]
		\item Its Coxeter graph $G$ consists of two $(1,0)$-graphs $G_1$ and $G_2$
		    and they are connected to each other.
			
	  \item For any $v_1\in G_1$ and $v_2\in G_2$, the graphs $G_1+v_2$ and
	      $G_2+v_1$ are $(2,0)$-graphs.

	  \item If one of the simplices, say the one represented by $G_2$, is of
	  	dimension $>1$, then the level of $G_1$ and $G_1+v_2$ are strict, and
	  	$v_2$ is the only real vertex of $G_1+v_2$.
	\end{enumerate}
\end{lemma}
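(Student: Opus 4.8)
The plan is to work throughout with the dictionary between faces of the simple polytope $\cP=\proj{\cC^*}$ and facial subsystems: since $\triangle^p\times\triangle^q$ is a simple polytope, a $j$-face lies in exactly $d-j$ facets, so it is recorded by the induced subgraph on those $d-j$ simple roots, i.e.\ by a $(j+1)$-facial subsystem. I write $|G_1|=p+1$, $|G_2|=q+1$, $p+q=d$, $\dim V=d+1$, and I use freely Propositions~\ref{prop:level1} and~\ref{prop:level2} (a vertex is \emph{real} exactly when its $1$-facial subsystem is Lorentzian of level~$1$, and imaginary exactly when that subsystem is finite or affine). For (i), corank~$1$ gives a single linear relation among the $d+2$ simple roots; by the Gale diagram of $\triangle^p\times\triangle^q$ its support is everything, with $G_1,G_2$ of opposite signs, so it reads $z:=\sum_{\alpha\in G_1}c_\alpha\alpha=\sum_{\beta\in G_2}d_\beta\beta$ with all $c_\alpha,d_\beta>0$ and $z\neq0$. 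As the relation space is one-dimensional and meets $G_2$, there is no relation supported inside $G_1$; hence $G_1$ (likewise $G_2$) has corank~$0$, with $\dim\Span G_1=p+1$. If $G_1,G_2$ had no connecting edge, then $z$ would be orthogonal to $\Span G_1+\Span G_2=V$, forcing $z\in V^\perp=\{0\}$; so they are connected, and then $\cB(z,z)=\sum_{\alpha\in G_1,\beta\in G_2}c_\alpha d_\beta\,\cB(\alpha,\beta)<0$, every cross term being $\le0$ with at least one negative. Thus $z$ is time-like and $\Span G_1,\Span G_2$, being nondegenerate and containing $z$, are Lorentz spaces. Finally each facet $G_1-v_1$, sitting inside the level-$\le1$ vertex subsystem $(G_1-v_1)+(G_2-v_2)$ as the deletion of the $q\ge1$ vertices of $G_2-v_2$, is finite or affine; so $G_1$ is Lorentzian with only finite/affine facets, a $(1,0)$-graph.

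For (ii), $v_2\notin\Span G_1$ since $\Span G_1\cap\Span G_2=\RR z$ and $v_2\notin\RR z$ (the relation has more than one $G_2$-term, as $q\ge1$); hence $G_1+v_2$ has corank~$0$. Deleting $v_2$ leaves the Lorentzian $G_1$, a non-finite/affine facet, so $G_1+v_2$ is of level $\ge2$. Its codimension-$2$ subsystems are of two kinds: $G_1-v_1$, already finite/affine; and $(G_1-v_1-v_1')+v_2$, which is the subsystem of the genuine face $(\text{edge of }\triangle^p)\times(\text{facet of }\triangle^q)$ of $\cP$, a $(q+1)$-facial subsystem, hence finite/affine by level~$2$. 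Therefore $G_1+v_2$ is of level exactly~$2$, a $(2,0)$-graph; the argument for $G_2+v_1$ is symmetric.

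For (iii), that $v_2$ is the only real vertex is immediate: removing $v_1$ leaves $(G_1-v_1)+v_2$, the subsystem of the face $(\text{vertex of }\triangle^p)\times(\text{facet of }\triangle^q)$, a $q$-facial subsystem, which is finite or affine since $q\ge2$, hence not a $(1,0)$-graph; while removing $v_2$ leaves the $(1,0)$-graph $G_1$. Strictness of $G_1$ and of $G_1+v_2$ both reduce to showing that the $q$-faces $F=(\text{vertex})\times\triangle^q$ and $F''=(\text{edge})\times(\text{facet})$ of $\cP$ are \emph{not} light-like, equivalently that the affine subsystems $G_1-v_1$ and $(G_1-v_1-v_1')+v_2$ are in fact finite.

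The main obstacle is exactly this last point, and I will handle it by proving that a level-$2$ Lorentzian polytope has no light-like $2$-face. Suppose $\sigma$ were one. Since $\cB$ is Lorentzian, any totally isotropic subspace is at most a line (two orthogonal light-like vectors are proportional), so the radical of the positive-semidefinite space $\Span\sigma$ is a single isotropic ray $\RR\delta$, and $\proj\delta$ is the unique point of $\proj Q$ on the plane $\Affine(\proj\sigma)$. Every edge $e$ of $\sigma$ has a subsystem containing the affine subsystem of $\sigma$, so $e$ is itself affine (being finite or affine by level~$2$ and not finite), i.e.\ light-like, whence its line passes through $\proj\delta$. As $\sigma$ has at least three non-collinear vertices and at most one can be isotropic, two of its edges meet at a space-like vertex $u\neq\proj\delta$; both pass through $\proj\delta$, so their lines coincide, forcing the vertices of $\sigma$ to be collinear — impossible. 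Because $q\ge2$ makes $F$ and $F''$ at least two-dimensional, each contains a $2$-face, which would be light-like if $F$ (resp.\ $F''$) were; this contradiction yields the required finiteness, so $G_1$ is strictly of level~$1$ and $G_1+v_2$ strictly of level~$2$.
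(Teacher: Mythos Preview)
Your proof is correct, and in fact more complete than the paper's own argument on the crucial point.

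For (i), the paper argues via Vinberg's face criterion: $G_1$ does not correspond to any face of $\cP$, so it is not finite or affine; and $G_1-v_1$ is the stabilizer of a $q$-face ($q\ge 1$), hence finite or affine.  You instead extract the unique linear relation from the Gale diagram, produce the time-like vector $z\in\Span G_1\cap\Span G_2$, and conclude directly that $G_1$ is Lorentzian of corank~$0$.  Your proof of connectedness (otherwise $z\perp V$, forcing $z=0$) is an elegant alternative to the paper's ``two negative eigenvalues'' argument, and your route makes the corank~$0$ claim for $G_1+v_2$ in (ii) explicit via $\Span G_1\cap\Span G_2=\RR z$.

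The genuine difference is in (iii).  The paper simply writes ``the dimensions of the faces mentioned above are all strictly $>1$, so the levels of $G_1$ and $G_1+v_2$ are strict'', without explaining why a face of $\cP$ of dimension $\ge 2$ must be time-like rather than light-like.  You supply this missing step by proving the general lemma that a level-$2$ Lorentzian Coxeter polytope admits no light-like $2$-face: in such a face, the unique isotropic direction $\proj\delta$ would lie on the line through every edge (each edge being light-like by level~$2$ and not time-like since the face is light-like), and two edges meeting at a non-isotropic vertex would then be collinear.  This lemma is worth stating on its own, as it is the real reason strictness holds and is not specific to the $\triangle\times\triangle$ combinatorics.
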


\begin{proof}
	The two simplices are represented by Coxeter graphs $G_1$ and $G_2$
	respectively.  By~\cite{vinberg1985}*{Theorem~3.1}, a subgraph of $G$
	correspond to a time-like face of $\cP$ if and only if it is of finite type.
	By~\cite{vinberg1985}*{Theorem~3.2}, a subgraph of~$G$ correspond to a
	light-like face of $\cP$ if and only if it is of affine type.  Since $G_1$
	and $G_2$ are of corank $0$ and do not correspond to any face of $\cP$, they
	are not of level $0$.  
	
	For any vertex $v_1$ of $G_1$, the graph $G_1-v_1$ is obtained from $G$ by
	removing at least $3$ vertices.  Therefore, it corresponds to a simple face
	of $\cP$ of dimension $\ge 1$.  We then conclude that $G_1$ is a
	$(1,0)$-graph (therefore connected).  Same argument applies to $G_2$.  $G =
	G_1 + G_2$ must be connected, otherwise the bilinear form $B$ has two
	negative eigenvalues, and the root system is not Lorentzian.

	For any $v_2\in G_2$, the graph $G_1+v_2$ is not of level $1$ because $G_1$
	is.  It is of corank~$0$ since its positive polytope is a simplex.  It is of
	level $2$ because further deletion of any two vertices leaves a level-$0$
	Coxeter graph corresponding to a simple face of $\cP$ of dimension $\ge 1$.
	Same argument applies to $G_2+v_1$.  

	If the simplex represented by $G_2$ is of dimension $>1$, then $G_2$ has more
	than two vertices. In this case, the dimensions of the faces mentioned above
	are all strictly $>1$, so the levels of $G_1$ and $G_1+v_2$ are strict.
	Furthermore, for any vertex $v_1\in G_1$, the graph $G_1+v_2-v_1$ is of level
	$0$ since it corresponds to a face of $\cP$ of dimension $\ge 1$.  So $v_2$
	is the only real vertex of $G_1+v_2$.
\end{proof}

\begin{remark}
	The same type of argument applies for many other lemmas in this section, and
	we will not repeat them in detail.  
\end{remark}

We now sketch the procedure for enumerating Coxeter polytopes of this type.  We
need to distinguish two sub-cases.

\subsubsection{One of the simplices is of dimension $1$}\label{ssec:prism} 

In this case, $\cP$ has the combinatorial type of a simplicial prism. In the
Coxeter graph, we may assume that the two vertices of $G_2$ correspond to the
base facets of $\cP$, while the vertices of $G_1$ correspond to lateral facets.
By Lemma~\ref{lem:simpsimp}(i), $G_2$ is a $(1,0)$-graph, so the vertices of
$G_2$ are connected by a dashed edge, meaning that the two base facets do not
intersect inside the hyperbolic space.  

A prism is \emph{orthogonally based} if one of the base facets is orthogonal to
all the lateral facets.  Any prism of level-$2$ can be cut into two
orthogonally based prisms, and two orthogonally based prisms can be spliced
into one prism if they share the same base.  Therefore, we only need to
consider orthogonally based prisms, as also argued by
Kaplinskaja~\cite{kaplinskaja1974}.  In the Coxeter graph of an orthogonally
based prism, one vertex of $G_2$ is not connected to any vertex of $G_1$.  By
Lemma~\ref{lem:simpsimp}(ii), deletion of this vertex leaves a $(2,0)$-graph.
Therefore, we construct a candidate graph for $(2,1)$-simplicial prism by
attaching a vertex to a real vertex of a $(2,0)$-graph with a dashed edge.

Any candidate graph obtained in this way represents a Coxeter polytope of level
$1$ or $2$.  In fact, by attaching a vertex $u$ to a real vertex $v$ with a
dashed edge, we are truncating the Coxeter simplex at a space-like vertex.  The
truncating facet intersects all the lateral faces orthogonally.  In the Coxeter
graph $G$, $u$ is the vertex for the truncating facet, $v$ is for the other
base facet.  Each vertex of the truncating facet corresponds to a graph of the
form $G_1+u-w$ where $w\in G_1$ and $u$ is isolated.  Such a graph is of level
$0$ because $G_1$ is of level $1$.  Consequently, if the truncated vertex is
the only space-like vertex, the Coxeter polytope we obtain has no space-like
vertex, and its level is $1$.  Otherwise, the polytope is of level $2$.

For each candidate graph, we still need calculate the label for the dashed
edge.  For this, we make use of the fact that the determinant of the matrix~$B$
is $0$.  The Coxeter polytope has the combinatorial type of a orthogonally
based simplicial prism if this label is $<-1$.  Otherwise, if the label $=-1$,
the dashed edge should be replaced by a solid edge with label $\infty$.  In
this case, the truncating facet truncates ``too much'' and meets another
vertex, so the Coxeter polytope has the combinatorial type of a pyramid over a
simplicial prism, which will be classified later.

The list of $(2,0)$-graphs with at least five vertices can be found in
\cite{chen2014}, where real vertices are colored in white or grey.  By attaching a
vertex to each of these real vertices, we obtain $655$ candidate Coxeter
graphs.  Among them, $129$ graphs correspond to pyramids over simplicial
prisms; $17$ graphs correspond to simplicial prisms of level $1$, as also
enumerated in~\cite{kaplinskaja1974}; the remaining $509$ graphs correspond to
orthogonally based prisms of level $2$.  Due to the large number of graphs, we
do not give the list in this paper. 

A Coxeter graph with three vertices is always of level $\le 2$, and the number
of real vertices equals the number of dashed edges.  
We then obtain level-$2$ Coxeter graphs in the form of
\tikz[scale=.8, baseline=-.5em]{
  \fill (0,0) circle (.1);
  \fill (150:1) circle (.1);
  \fill (210:1) circle (.1);
  \fill (1,0) circle (.1);
  \draw[dashed] (0,0)--(1,0);
  \draw[dashed] (150:1)--(210:1);
  \draw[dashed] (0,0)--(150:1);
  \draw (0,0)--(210:1);
}.
It corresponds to a two dimensional square.  A Coxeter graph with four vertices
is of level $\le 2$ as long as there is no dashed edge, and the number of real
vertices equals the number triangles representing hyperbolic triangle
subgroups.  This completes the classification of orthogonally based simplicial
prisms of level $1$ or $2$.

The complete list of prisms of level $1$ or $2$ is obtained by splicing two
orthogonally based prisms of level $1$ or $2$ if they share a same orthogonal
base.  In other words, if two Coxeter graphs of orthogonally based prism of
level $1$ or $2$ share the same subgraph $G_1$, we can identify this subgraph,
and merge the dashed edge into one, as shown below.  
\[
	\tikz[baseline=(G1.base)]{
		\node[draw,circle,inner sep=0] at (0,0) (G1){$G_1$};
		\node[fill,circle,inner sep=2] at (30:1) (v){};
		\node[fill,circle,inner sep=2] at (-30:1) (u){};
		\draw (G1) -- (v); \draw[dashed] (v)--(u);
	}
	+
	\tikz[baseline=(G1.base)]{
		\node[draw,circle,inner sep=0] at (0,0) (G1){$G_1$};
		\node[fill,circle,inner sep=2] at (30:1) (v){};
		\node[fill,circle,inner sep=2] at (-30:1) (u){};
		\draw (G1) -- (u); \draw[dashed] (u)--(v);
	}
	=
	\tikz[baseline=(G1.base)]{
		\node[draw,circle,inner sep=0] at (0,0) (G1){$G_1$};
		\node[fill,circle,inner sep=2] at (30:1) (v){};
		\node[fill,circle,inner sep=2] at (-30:1) (u){};
		\draw (v)--(G1)--(u); \draw[dashed] (u)--(v);
	}
\]
The result is of level $1$ if the two orthogonally based prisms are both
$(1,1)$-polytopes, or of level $2$ otherwise.

\subsubsection{The two simplices are both of dimension $>1$}

A vertex $u$ of a $(1,0)$-graph $H$ is called a \emph{port} of $H$ if there is
a $(2,0)$-graph in the form of $H+v$ in which $v$ is the only real vertex and
$u$ is a neighbor of $v$.

We construct a candidate $(2,1)$-graph by connecting the ports of two
$(1^s,0)$-graphs in all possible ways that satisfy Lemma~\ref{lem:simpsimp}.
For each candidate, we calculate its corank, and verify its level of $G$ by
checking the level of $G-v_1-v_2$ for each $v_1\in G_1$ and $v_2\in G_2$.
Recall that $G$ is of level $2$ if the level of $G-v_1-v_2$ is always $\le 1$
but not always $0$.

In practice, ports are detected with the help of the following lemma.
\begin{lemma}[Extending \cite{esselmann1996}*{Lemma 4.2}]\label{lem:trick}
	If $u$ is a port of $H$, then there is a $(2,0)$-graph in the form of $H+v$
	in which $v$ is the only real vertex and is only connected to $u$ by a solid
	edge with label $3$.
\end{lemma}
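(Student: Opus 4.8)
The plan is to realize every way of attaching $v$ to $H$ as the choice of a single vector, and to show that one ``canonical'' choice always works once $u$ is a port. Since $H$ is a $(1,0)$-graph, the roots $\{\alpha_w\}_{w\in H}$ form a basis of the time-like space $V_H=\Span\{\alpha_w\mid w\in H\}$, and $V_H^\perp$ is a space-like line (the ambient space is Lorentzian and $V_H$ already carries the negative direction). Any unit vector $\alpha_v$ completing the root basis then decomposes as $\alpha_v=p+q$ with $p\in V_H$ and $0\ne q\in V_H^\perp$, so $\cB(p,p)<1$, $\cB(q,q)=1-\cB(p,p)>0$, and the edges from $v$ depend only on $p$ via $\cB(\alpha_v,\alpha_w)=\cB(p,\alpha_w)$. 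I would let $\varpi_u\in V_H$ be dual to $\alpha_u$ (so $\cB(\varpi_u,\alpha_w)=\delta_{uw}$) and take the \emph{canonical} attachment $p_0=-\tfrac12\varpi_u$, giving $\cB(\alpha_v,\alpha_u)=-\tfrac12$ and $\cB(\alpha_v,\alpha_w)=0$ for $w\ne u$. As $\varpi_u$ is a positive multiple of the fundamental weight of $H$ opposite $u$, which is not space-like by Proposition~\ref{prop:level1}, we get $\cB(p_0,p_0)=\tfrac14\cB(\varpi_u,\varpi_u)\le 0<1$; hence this $\alpha_v$ exists and joins $v$ to $u$ by a solid $3$-edge and to nothing else.

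It then remains to certify that the canonical $H+v$ is a $(2,0)$-graph with $v$ its only real vertex, and I would first reduce this to the single condition that $(H-w)+v$ is finite or affine for every $w\in H$. Indeed, the canonical $H+v$ is automatically Lorentzian of corank $0$ (because $V=V_H\perp\RR q$ with $\alpha_v\notin V_H$); the simplex vertex opposite the facet $v$ is spanned by $q$, hence space-like, so $H+v$ is not of level $\le 1$; and in a corank-$0$ level-$2$ graph a vertex is real precisely when its link is $(1,0)$ and non-real precisely when its link is finite or affine, which is the vertex-wise reading of Proposition~\ref{prop:level2}. Thus if every $(H-w)+v$ is finite or affine, then every double deletion (a subgraph of some $(H-w)+v$ or of $H-w$) is finite or affine, so $H+v$ is exactly of level $2$, and $v$ is the unique real vertex since $(H+v)-v=H$ is $(1,0)$ while no deletion of a $w\in H$ is.

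The heart of the matter is to verify this condition from the port hypothesis. Write $\tilde v$, with root $\tilde\alpha_v$, for the attachment witnessing that $u$ is a port, so $H+\tilde v$ is a $(2,0)$-graph with $\tilde v$ its only real vertex and $u\sim\tilde v$; by the previous paragraph $(H-w)+\tilde v$ is positive semidefinite for every $w$. Fix $w\ne u$ (the case $w=u$ is trivial: $v$ becomes isolated, and $(H-u)+v$ is a disjoint union of the finite-or-affine $H-u$ with an $A_1$). Let $C$ be the connected component of $u$ in $H-w$; I claim $C$ is of finite type. If $C$ were affine, its null vector $\delta=\sum_{w'\in C}c_{w'}\alpha_{w'}$ would have all $c_{w'}>0$, and semidefiniteness of $(H-w)+\tilde v$ forces $\cB(\delta,\tilde\alpha_v)=0$; since each $\cB(\alpha_{w'},\tilde\alpha_v)\le 0$ this makes every term vanish, in particular $\cB(\alpha_u,\tilde\alpha_v)=0$, contradicting $u\sim\tilde v$. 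With $C$ finite, its Gram matrix is positive definite with non-positive off-diagonal entries, i.e.\ a Stieltjes matrix, so its inverse is entrywise non-negative. In $C+v$, positive semidefiniteness is equivalent through the Schur complement to $1$ minus a quadratic form in the entries $t_{w'}=-\cB(\alpha_v,\alpha_{w'})\ge0$ whose coefficients come from that non-negative inverse. The canonical entries satisfy $0\le t_{w'}^{\mathrm{can}}\le t_{w'}^{\mathrm{orig}}$ (we lowered $t_u$ to $\tfrac12$ and the rest to $0$), so the quadratic form only decreases; as the port witness already kept it $\le 1$, the canonical $C+v$ is positive semidefinite. The other components of $H-w$ are untouched by $v$ and remain finite or affine, so $(H-w)+v$ is finite or affine, completing the reduction and the proof.

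I expect the monotonicity step to be the main obstacle. One must correctly isolate the component $C$ of $u$, rule out its being affine by exploiting the port witness (this is exactly where Esselmann's compact argument must be upgraded to the finite-volume setting, since affine links only appear here), and then invoke the entrywise non-negativity of the inverse of a finite-type Gram matrix to run the Schur-complement comparison between the original and canonical attachments.
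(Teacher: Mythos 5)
The paper does not actually prove Lemma~\ref{lem:trick}; it is stated bare, with only a pointer to Esselmann's Lemma~4.2 (which treats the compact, i.e.\ strictly level-$1$/level-$2$, case), so there is no in-paper argument to match yours against. Your proof is correct and, as far as I can check, complete. The overall strategy --- encode the attachment of $v$ by the $V_H$-component $p$ of $\alpha_v$, show the canonical choice $p_0=-\tfrac12\varpi_u$ is realizable because $\cB(\varpi_u,\varpi_u)\le 0$ by Proposition~\ref{prop:level1}, reduce the $(2,0)$-with-unique-real-vertex condition to positive semidefiniteness of every $(H-w)+v$, and then compare the canonical attachment to the port witness componentwise via the Schur complement and the entrywise nonnegativity of the inverse of a finite-type Gram matrix --- is essentially the natural extension of Esselmann's monotonicity argument, and the genuinely new ingredient you supply is exactly the right one: the null-vector argument ruling out an affine component of $u$ in $H-w$, which cannot occur in the compact setting but must be excluded here before the Stieltjes/Schur step applies. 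Two small points you leave implicit and should state: (1) the dichotomy ``$w$ non-real $\Leftrightarrow$ $(H+\tilde v)-w$ is finite or affine'' needs the interlacing fact that a one-vertex-deleted principal submatrix of a nondegenerate Lorentzian Gram matrix is either positive semidefinite or again nondegenerate Lorentzian (never Lorentzian of positive corank), so that ``not a $(1,0)$-graph and of level $\le 1$'' really forces positive semidefiniteness; (2) the inequality $t_u^{\mathrm{orig}}\ge\tfrac12$ uses that any admissible nonzero off-diagonal entry of a root basis is $\le-\cos(\pi/3)$. Both are standard and one-line, so they are omissions of exposition, not gaps.
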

For each port $u$ of $H$, we find all the $(2^s,0)$-graphs in the form of $H+v$
in which $v$ is the only real vertex and $u$ is a neighbor of $v$.  These
$(2^s,0)$ graphs indicate the possible ways for connecting $H$ to other
$(1,0)$-graphs.

Some $(1^s,0)$-graphs with ports are listed in Figure~\ref{fig:L1Ports}, where
ports are colored in white and marked with numbers.  We exclude hyperbolic
triangle groups with label $7$, $9$ or $\ge 11$.  By the same technique as
in~\cite{esselmann1996}*{\S~4.1, Step 3) 4)}, we verified by computer that
these triangle groups can not be used to form any Coxeter graph of positive
corank.  In Table~\ref{tbl:Pyr0-nn}, we list all the $28$ Coxeter polytopes of
the type $\triangle\times\triangle$, with both simplices of dimension $>1$.
For each polytope, we give the position of $G_1$ and $G_2$ in
Figure~\ref{fig:L1Ports}, and the edges connecting $G_1$ and $G_2$ in the
format of (port in $G_1$, port in $G_2$, label).

\subsection{$\cP$ has the type of $\Pyramid(\triangle\times\triangle)$}

In this case, the Coxeter graph $G$ consists of three parts: a vertex
corresponding to the base facet, and two subgraphs $G_1$ and $G_2$
corresponding to the two simplices.  The base facet has the type of
$\triangle\times\triangle$.  Vertices on the base facet are all simple.  Every
$k$-face of $\cP$, except for the apex vertex, corresponds to a subgraph of $G$
obtained by deleting $k+2$ vertices, including at least one vertex from both
$G_1$ and $G_2$.  The stabilizer of the apex is represented by the graph
$G_1+G_2$.  The corank of $G_1+G_2$ is $1$, and its level may be $0$ if the
apex is light-like, or $1$ if the apex is space-like.  We now study these two
sub-cases separately.

\subsubsection{The apex is light-like}

In this case, $G_1+G_2$ is an affine graph.

If a $(1,0)$-graph has a unique ideal vertex, we call this vertex the
\emph{hinge} of the graph.
\begin{lemma}\label{lem:pyrsimpsimp}
	If $G_1+G_2$ is a $(0,1)$-graph, then
	\begin{enumerate}[label=\emph{(\roman*)}]
		\item $G_1$ and $G_2$ are both affine and are not connected to each
			other;

		\item $G_1+v$ and $G_2+v$ are both $(1,0)$-graphs;

		\item If one of the simplices, say the one represented by $G_2$, is of
			dimension $>1$, then $v$ is the hinge of $G_1+v$;

		\item Let $v_2\in G_2$ be a neighbor of $v$, then $G_1+v+v_2$ is a
			$(2,0)$-graph.
	\end{enumerate}
\end{lemma}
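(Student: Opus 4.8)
The plan is to mirror the proof of Lemma~\ref{lem:simpsimp}, reading off the type (finite, affine, or indefinite) of each subgraph from the face of $\cP$ that it stabilizes. Throughout I write $G=G_1+G_2+v$, with $v$ the vertex for the base facet and $G_1,G_2$ the two blocks of lateral facets coming from the simplex factors $\triangle_{d_1},\triangle_{d_2}$ of the base. I will use repeatedly the correspondence of \cite{vinberg1985}*{Theorems 3.1 and 3.2}: a subgraph of $G$ is of finite type exactly when its facets meet in a time-like face, and of affine type exactly when they meet in a light-like face; together with the combinatorial rule for the pyramid, that a set of facets has nonempty intersection iff it omits at least one facet from each block, the one exception being that \emph{all} lateral facets meet (non-simply) at the apex. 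Since $G_1+G_2$ is a $(0,1)$-graph, it is of level $0$ with positive corank, hence affine, which is to say the apex is light-like.

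For (i), every lateral facet passes through the light-like apex, so the facets of $G_1$ alone meet exactly there; by Vinberg they are of affine, not finite, type, the apex being light-like forbidding finite type, and likewise for $G_2$. For the disconnectedness I would use that the vertex figure of the apex is the product $\triangle_{d_1}\times\triangle_{d_2}$, so the Euclidean reflection group $W_{G_1+G_2}$ stabilizing the apex has a product as fundamental chamber and is therefore reducible as a direct product $W_{G_1}\times W_{G_2}$ of irreducible affine groups with chambers $\triangle_{d_1},\triangle_{d_2}$; a reducible group forces no edge between $G_1$ and $G_2$. Equivalently, since every proper subgraph of an irreducible affine graph is finite, the affine graph $G_1$ cannot sit inside a strictly larger affine component, so $G_1$ and $G_2$ are unions of full components and are not joined.

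For (ii)--(iv) I would argue exactly as in Lemma~\ref{lem:simpsimp}. For (ii): the facets of $G_1+v$ have empty common intersection, the apex not lying on the base, so $G_1+v$ is not of level $0$; adjoining $v$ to the irreducible affine $G_1$, whose radical points along the apex and against which $v$ pairs nontrivially, makes the Gram matrix nondegenerate, so the corank is $0$; and each one-vertex deletion is a genuine face, deletion of $v$ giving the affine $G_1$ while deletion of $u\in G_1$ gives the stabilizer of the $d_2$-face $p\times\triangle_{d_2}$, where $p$ is the vertex of $\triangle_{d_1}$ opposite $u$. As $\cP$ is of level $2$, every such face of dimension $\ge 1$ is time- or light-like, so each deletion is finite or affine; hence $G_1+v$ is a $(1,0)$-graph, and symmetrically so is $G_2+v$. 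For (iii): when $d_2>1$ the faces $p\times\triangle_{d_2}$ have dimension $\ge 2$, and I would show, as in the strictness step of Lemma~\ref{lem:simpsimp}(iii), that such a face must be time-like, so its stabilizer $(G_1-u)+v$ is finite; then the only affine one-vertex deletion of $G_1+v$ is that of $v$, so $v$ is the unique ideal vertex, the hinge. For (iv): with $v_2\in G_2$ a neighbour of $v$, the graph $G_1+v+v_2$ has corank $0$ and is of level $2$ by the same bookkeeping, every further deletion of two vertices landing on a simple face of dimension $\ge 1$, hence of level $0$, but not always $0$.

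The main obstacle is part (i): extracting from the single hypothesis that the apex is light-like both that each block is affine and that the two blocks are mutually orthogonal. The affineness is immediate from Vinberg, but the splitting $G_1\sqcup G_2$ is the genuinely new point, where the product structure of the base, as opposed to the connected simplex of Lemma~\ref{lem:simpsimp}, is used; it rests on the fact that a Euclidean Coxeter group with a product-of-simplices chamber is reducible, together with the minimality of irreducible affine graphs among affine graphs. Once (i) is in place, (ii)--(iv) are routine repetitions of the vertex-deletion and face-dimension bookkeeping already carried out for Lemma~\ref{lem:simpsimp}.
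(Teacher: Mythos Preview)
Your proposal is correct and follows essentially the same approach as the paper. The paper itself does not give a detailed proof: it simply refers to the argument of \cite{tumarkin2004}*{Lemma~4} for part~(i) (which is precisely the horospherical-section/reducibility argument you sketch, that a Euclidean Coxeter polytope of combinatorial type $\triangle_{d_1}\times\triangle_{d_2}$ must be a metric product, forcing $G_1$ and $G_2$ to be disjoint irreducible affine diagrams), and to the face-dimension bookkeeping of Lemma~\ref{lem:simpsimp} for parts~(ii)--(iv), exactly as you do.
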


For a proof, the first point follows from the same argument as in the proof
of~\cite{tumarkin2004}*{Lemma~4}, and other points follows from the same type
of argument as in the proof of Lemma~\ref{lem:simpsimp}.  We now sketch the
procedure for enumerating Coxeter polytopes of this type.

If one of the simplices is of dimension $1$, we construct a candidate
$(2,1)$-graph as follows.  For an ideal vertex $v$ of a non-strict
$(1,0)$-graph $H$, we extend $H$ to a $(2,0)$-graph by attaching a vertex $u$
to $v$ with a solid edge of label $a$.  We allow $a$ to be $2$, meaning that
$u$ and $v$ are actually not connected.  We attach a second vertex $u'$ to $v$
in a second (possibly the same) way with label $a'$.  Then we connect $u$ and
$u'$ by a solid edge with label $\infty$.  In the graph we obtain, $v$
correspond to the base facet of $\cP$, $H-v$ correspond to $G_1$ and $u+u'$
correspond to $G_2$.  Since $v+u+u'$ is a $(1,0)$-graph, $a$ and $a'$ can not
be both $2$.  One then verifies Lemma~\ref{lem:pyrsimpsimp} on $H+u+u'$, and
conversely that any graph obtained in this way is of level $1$ or $2$.
Furthermore, $H+u+u'$ has a positive
corank~\cite{tumarkin2004}*{Lemma~3}\footnotemark[1] which necessarily equals
$1$ because $H+u$ is of corank~$0$.  With the same argument as in
Section~\ref{ssec:prism}, we see that the graph is of level $2$ as long as $u$
and $u'$ are not both the only real vertex of $H+u$ and $H+u'$ respectively.
\footnotetext{
	This lemma follows from Proposition 12 of \cite{vinberg1984}, instead
	of~\cite{vinberg1967}.
}

The list of non-strict $(1,0)$-graph with $\ge 4$ vertices can be found
in~\cite{chein1969}.  The procedure above then gives, up to graph isomorphism,
$358$ graphs of level $1$ or $2$.  Among them, $89$ are of level $1$ as also
enumerated in~\cite{tumarkin2004}.  The remaining $269$ graphs correspond to
pyramids of level $2$, and $129$ of them were discovered earlier in
Section~\ref{ssec:prism} when enumerating orthogonally based simplicial prisms.
Due to the large number of graphs, we do not give the list in this paper.  

If both simplices are of dimension $1$, then the Coxeter graph is in the form
of
\tikz[scale=.8, baseline=-.5em]{
  \fill (0,0) circle (.1);
  \fill (150:1) circle (.1);
  \fill (210:1) circle (.1);
  \fill (30:1) circle (.1);
  \fill (-30:1) circle (.1);
	\draw (0,0) -- node[midway] {$a_1$} 
				(150:1)--node[midway] {$\infty$}
				(210:1)--node[midway] {$a_2$}
				(0,0) -- node[midway] {$b_1$} 
				(30:1) --node[midway] {$\infty$}
				(-30:1)--node[midway] {$b_2$} (0,0);
}.
It corresponds to a square pyramid.  Its level is at most $2$, and equals $2$
if $1/a_i+1/b_j<1/2$ for some $i,j\in\{1,2\}$.

If both simplices are of dimension $>1$, we construct a candidate $(2,1)$-graph
by taking two non-strict $(1,0)$-graphs with hinges and identifying their
hinges.  For any graph $G$ constructed in this way, one easily verifies
Lemma~\ref{lem:pyrsimpsimp}.  The corank of~$G$ is positive
by~\cite{tumarkin2004}*{Lemma~3}\footnotemark[1], and necessarily equals $1$
by applying~\cite{vinberg1984}*{Proposition~12} on $G-v$ for any $v$ different
from the hinge.  Finally, we verify the level of $G$ by checking the level of
$G-v_1-v_2$ for each $v_1\in G_1$ and $v_2\in G_2$.  Recall that $G$ is of
level $2$ if the level of $G-v_1-v_2$ is always $\le 1$ but not always $0$.

All non-strict $(1,0)$-graphs with a hinge and $\ge 4$ vertices are listed in
Figure~\ref{fig:L1Hinges}.  In Table~\ref{tbl:Pyr1-0-nn}, we list all the $65$
polytopes of this class by giving the position of $G_1+v$ and $G_2+v$ in
Figure~\ref{fig:L1Hinges} respectively.

\subsubsection{The apex is space-like}
In this case, $G_1+G_2$ is a $(1,1)$-graph.

For a $(3,0)$-graph $G$, a vertex is said to be \emph{surreal} if its removal
leaves a $(2,0)$-graph.
\begin{lemma}\label{lem:pyrsimpsimp1}
	If $G_1+G_2$ is a $(1,1)$-graph, then
	\begin{enumerate}[label=\emph{(\roman*)}]
		\item $G_1$ and $G_2$ are both $(1^s,0)$-graphs, and they are connected;

		\item $G_1+v$ and $G_2+v$ are $(2^s,0)$-graphs in which $v$ is the only
			real vertex;
	\end{enumerate}
	and for any $v_2\in G_2$,
	\begin{enumerate}[resume, label=\emph{(\roman*)}]
		\item the graph $G_1+v_2$ is a $(2^s,0)$-graph in which $v_2$ is the only
			real vertex;

		\item the graph $G_1+v+v_2$ is a $(3^s,0)$-graph for which $v$ and $v_2$
		    are the only surreal vertices.
	\end{enumerate}
\end{lemma}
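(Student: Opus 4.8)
The plan is to argue in the combinatorial face lattice of $\cP=\Pyramid(\triangle^p\times\triangle^q)$, translating faces into subgraphs by~\cite{vinberg1985} exactly as in the proofs of Lemmas~\ref{lem:simpsimp} and~\ref{lem:pyrsimpsimp}: a subgraph is of finite type iff it stabilises a time-like face, of affine type iff it stabilises a light-like face, and of level $\ge1$ otherwise. Write $v$ for the base facet, $G_1=\{a_0,\dots,a_p\}$, $G_2=\{b_0,\dots,b_q\}$; the apex is the unique non-simple vertex of $\cP$, with stabiliser $G_1+G_2$. Since the apex spans the space-like line $\Span(G_1+G_2)^\perp$, the hyperplane $\Span(G_1+G_2)$ is nondegenerate, so the single corank-$1$ relation of $\cP$ lives inside $G_1+G_2$; and since $G_1+G_2$ is the stabiliser of a space-like ray it is irreducible (the remark after Theorem~\ref{thm:packing2}), so $G_1$ and $G_2$ are joined by an edge and the relation is \emph{mixed}, involving both some $a_i$ and some $b_j$ (a one-sided relation would be killed by the joining edge, lowering the corank). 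Hence every subgraph occurring in (i)--(iv) omits part of the support of the relation, so its roots are independent and its positive cone is simplicial; this makes the recursion $\operatorname{level}(T)=1+\max_x\operatorname{level}(T-x)$ available, and corank $0$ will follow from Proposition~\ref{prop:level1} once each piece is known to be of level $1$.

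The crux, and the step I expect to be hardest, is to show that $\cP$ has \emph{no} light-like face, i.e.\ that $\cP$ is strictly of level $2$; this makes every genuine-face subgraph finite rather than merely affine, and so yields all the strict superscripts at once. Suppose $F$ were a light-like face, with stabiliser $T$ and isotropic radical $\xi=\sum_{\alpha\in T}c_\alpha\alpha$, where $c_\alpha\ge0$ and $\xi\ne0$. For any facet $f$ meeting $F$ in one of its facets, $T+f$ stabilises a genuine face and is therefore positive semidefinite; since $\cB(\xi,f)=\sum_\alpha c_\alpha\cB(\alpha,f)\le0$, positive semidefiniteness forces $\cB(\xi,f)=0$. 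As $F$ is simple (the only non-simple vertex is the apex, which is space-like and hence not $F$), its facets number at least $\dim F+1$, so $\xi$ is orthogonal to $T$ together with at least $\dim F+1$ further facets, i.e.\ to at least $d+1$ facets in all. If $\xi$ is orthogonal to the remaining facet as well then $\xi\perp\Delta$ and $\xi=0$, a contradiction; otherwise $\xi$ is a light-like ray lying on all facets but one, a second non-simple vertex distinct from the apex, which is impossible for a pyramid over $\triangle\times\triangle$. Therefore no light-like face exists. It is precisely here that the space-likeness of the apex is used — it is what makes $\Span(G_1+G_2)$ nondegenerate and keeps the apex out of the way — in contrast to the light-like apex of Lemma~\ref{lem:pyrsimpsimp}.

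With strictness in hand the statements follow by locating faces. The facets $G_1$ meet exactly in the apex, whose stabiliser $G_1+G_2$ is strictly larger, so $G_1$ is neither finite nor affine and $\operatorname{level}(G_1)\ge1$; deleting any $a_i$ gives the finite stabiliser $G_1-a_i$ of a genuine face, so $\operatorname{level}(G_1)=1$, and Proposition~\ref{prop:level1} makes $G_1$ Lorentzian of corank $0$ — thus $(1^s,0)$ — and likewise $G_2$; together with the edge found above this is (i). For (ii) and (iii), $G_1+v$ has empty common face (the apex lies off $v$) while $G_1+v_2$ again meets only in the apex, so both are of level $\ge1$; deleting the added vertex returns $G_1$ (level $1$) and deleting an $a_i$ gives a finite genuine-face stabiliser, so by the recursion both are $(2^s,0)$ with the added vertex as the unique real vertex. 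Finally $(G_1-a_i)+v+v_2$ is a finite genuine-face stabiliser, so $\operatorname{level}(G_1+v+v_2)=1+\max(2,2,0)=3$, and the only deletions dropping the level to $2$ are those of $v$ and of $v_2$; this gives the $(3^s,0)$ statement with $v,v_2$ the only surreal vertices, proving (iv).
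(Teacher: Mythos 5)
Your overall framework (Vinberg's face--subgraph dictionary, exactly as in the proof of Lemma~\ref{lem:simpsimp}) is the right one and is the paper's, but the step you single out as the crux --- that $\cP$ has \emph{no} light-like face, i.e.\ is strictly of level $2$ --- is false, and the lemma does not need it. A $(2,1)$-polytope of type $\Pyramid(\triangle\times\triangle)$ with space-like apex can have light-like (ideal) base vertices, and it has a light-like \emph{edge} through the apex whenever the $(1,1)$-polytope $G_1+G_2$ is non-compact: an ideal vertex of $G_1+G_2$ is an affine subgraph $G_1+G_2-a_i-b_j$ of cardinality $d-1$, which is precisely the stabiliser of the lateral edge joining the apex to the base vertex $G-a_i-b_j$. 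The paper's enumeration in this very subsection draws $G_1+G_2$ from the non-compact lists of Kaplinskaja and Tumarkin, so such faces genuinely occur. Your argument for the claim breaks exactly at these faces: for a lateral edge $F$ with stabiliser $T=G_1+G_2-a_i-b_j$, the ``facet'' of $F$ on the apex side is the non-simple, \emph{space-like} apex, so $T+a_i$ and $T+b_j$ sit inside the Lorentzian graph $G_1+G_2$ and are not positive semidefinite; the same failure occurs at any space-like simple vertex bounding a light-like edge. You therefore cannot conclude $\cB(\xi,f)=0$ there, and the contradiction never materialises.

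What actually delivers the strictness in (i)--(iv) is dimension counting, as in Lemma~\ref{lem:simpsimp}: every subgraph obtained from $G_1$, $G_1+v$, $G_1+v_2$ or $G_1+v+v_2$ by deleting the prescribed number of vertices is obtained from $G$ by deleting at least $|G_2|+2$ vertices, hence stabilises a face of $\cP$ of dimension at least $|G_2|\ge 2$; and a light-like face of a level-$2$ polytope has dimension at most $1$ (Proposition~\ref{prop:simple}, or directly: a light-like $2$-face meets the closed light cone in a single point, so some edge of it would be space-like). Hence those subgraphs are of finite type, which is all that is needed; the lower bounds on the levels come, as you say, from the fact that $G_1$, $G_1+v$, etc.\ either stabilise no face or only the space-like apex. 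Note also that your last step asserts $(G_1-a_i)+v+v_2$ is a finite face stabiliser; when $|G_2|=2$ it stabilises a \emph{vertex} of $\cP$, which may be space-like and hence of level $1$ --- the value $3$ for the level and the identification of the surreal vertices survive, but not for the reason you give. Part (i) is in any case quoted by the paper from Tumarkin's Lemma~2(I) rather than reproved.
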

For a proof, the first point is~\cite{tumarkin2004}*{Lemma~2(I)}, and other
points follows from the same type of argument as in the proof of
Lemma~\ref{lem:simpsimp}.  We now sketch the procedure for enumerating Coxeter
polytopes of this type.

If one of the simplices is of dimension $1$, $G_1+G_2$ represents a
$(1,1)$-prism.  Coxeter graphs for $(1,1)$-prisms are classified
in~\cite{kaplinskaja1974}, where a list of orthogonally based $(1,1)$-prisms is
given.  Each graph in the list is obtained by attaching a dashed edge to the
unique real vertex of a $(2^s,0)$-graph, see also~\cite{vinberg1985}*{\S~5.4}.
Therefore, if we ignore the dashed edges, the list in~\cite{kaplinskaja1974}
essentially classified all connected $(2^s,0)$-graphs with a unique real
vertex.

Given a $(1^s,0)$-graph $H$, we construct a candidate $(2,1)$-graph as follows.
We extend $H$ to three $(2^s,0)$-graphs (possibly same) $H+u$, $H+v$ and $H+w$,
in which $u$, $v$ and $w$ are respectively the unique real vertex, therefore
$H+u+v+w$ satisfies Lemma~\ref{lem:pyrsimpsimp1}(iii).  We now choose $u$ as
the base facet.  We combine the three graphs by identifying $H$, then connect
$v$ and $w$ by a dashed edge, such that $G_1+v+w$ represent a $(1,1)$ prism.
We also connect $u,v$ and $u,w$ respectively by a solid edge, and give it all
possible labels (necessarily between $2$ and $6$) such that $H+u+v$ and $H+u+w$
are $(3^s,0)$-graphs.  One verifies that $u,v$ and $u,w$ are respectively the
only two surreal vertices, so $H+u+v+w$ satisfies
Lemma~\ref{lem:pyrsimpsimp1}(iv).  Finally, we verify the corank and the level
of the candidate graph.

In order to satisfy Lemma~\ref{lem:pyrsimpsimp1}(i), $v$ and $w$ can not be
both disjoint from $H$.  But the vertex $u$ can be an isolated vertex, in which
case $H+u+v+w$ is indeed a $(2,1)$-pyramid.  Otherwise, we list in
Figure~\ref{fig:Pyr1-1-1n} all the $18$ connected $(2,1)$-pyramid in dimension
$\ge 5$.
\begin{figure}[h]
	\includegraphics[width=\textwidth]{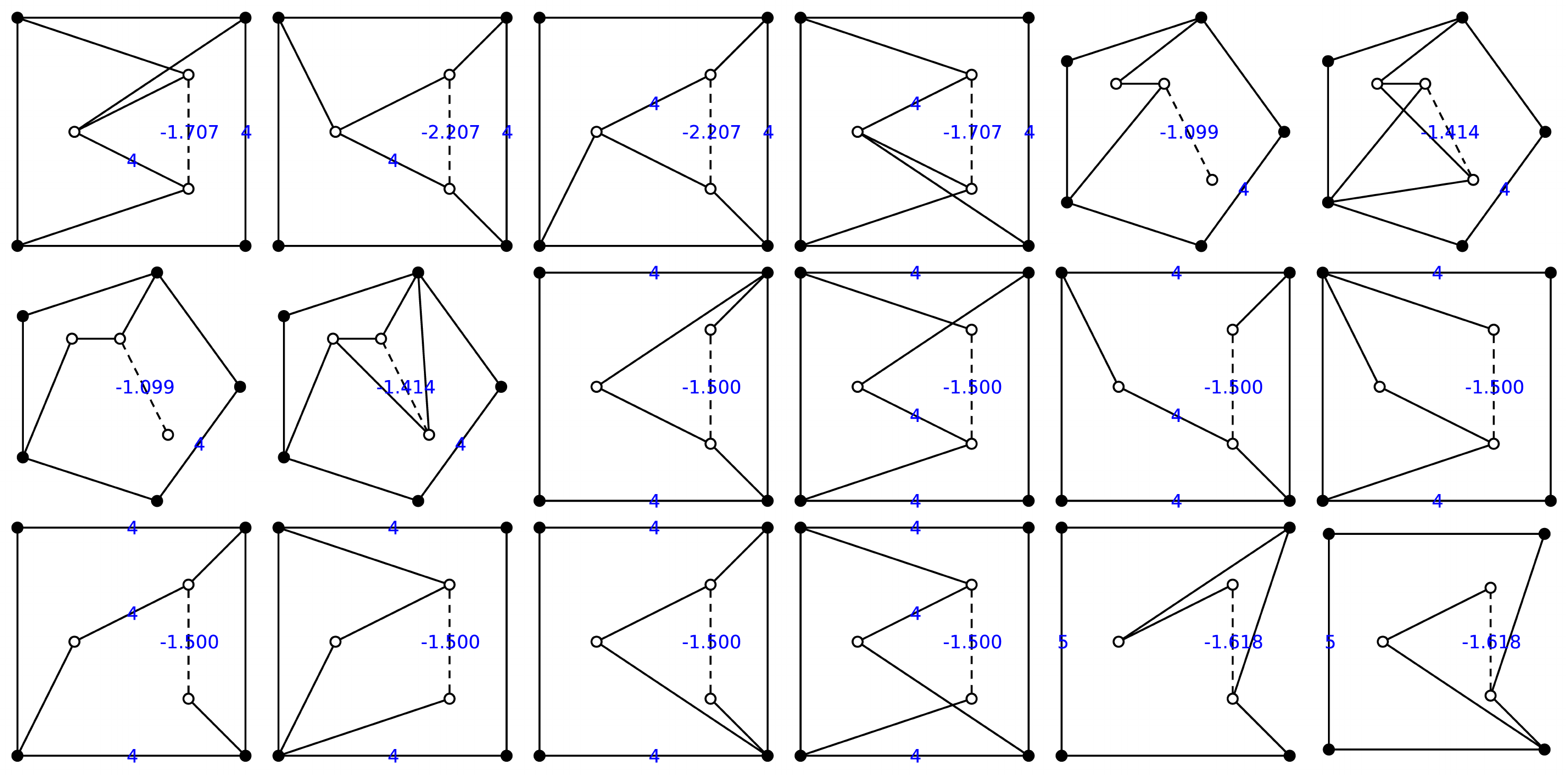}
	\caption{
		The $18$ connected $(2,1)$-graphs of rank $\ge 7$ whose Coxeter polytope
		has the type of a pyramid over a prism with space-like apex.
	} \label{fig:Pyr1-1-1n}
\end{figure}
For $4$-dimensional pyramids over triangular prisms, we obtain $266$ connected
$(2,1)$-graphs from triangle graphs with labels at most~$6$.  Because of the
large number of graphs, we do not list them in this paper.  For triangle graphs
with a label $k\ge 7$, the Coxeter graph is necessarily in the form of
\tikz[scale=.8, baseline]{
  \fill (-1,-.5) circle (.1);
  \fill (-1,.5) circle (.1);
  \fill (0,0) circle (.1);
  \fill (0.5,0) circle (.1);
  \fill (1,-.5) circle (.1);
  \fill (1,.5) circle (.1);
	\draw (0.5,0)--(0,0)--(1,-.5)--(0.5,0)--(1,.5)--(0,0);
	\draw (-1,.5)--(0,0)--(-1,-.5);
	\draw (-1,.5) -- node[midway] {$k$} (-1,-.5); 
	\draw[dashed] (1,.5) -- node[midway] {$-c$} (1,-.5);
}.  
The unlabeled edges can not have label $\ge 7$, so for a given $k$, there are
only finitely many possibilities for the labels.  For each of them, the value
of $-c$ is determined by $k$.  We then use Sage to find the expressions of the
determinant in terms of $k$, and find no integer root that is $\ge 7$ for these
expressions.  So we believe that the labels on solid edges are at most $6$ for
this type of $(2,1)$-graphs.  However, the author thinks that this is the point
to question the reliability of computer enumeration, and an analytic
explanation is welcomed.

For $3$-dimensional pyramids over squares, both simplices are of dimension $1$,
and the Coxeter graph is necessarily in the form of
\tikz[scale=.8, baseline]{
  \fill (0,1) circle (.1);
  \fill (150:1) circle (.1);
  \fill (210:1) circle (.1);
  \fill (30:1) circle (.1);
  \fill (-30:1) circle (.1);
	\draw (150:1)--(-30:1)--(210:1)--(30:1)--(150:1); 
	\draw[dashed] (150:1)--(210:1);
	\draw[dashed] (-30:1)--(30:1);
	\draw (150:1)--(0,1)--(30:1);
	\draw (210:1)--(0,1)--(-30:1);
}.
To be a $(2,1)$-graph, the dashed edges need to bear correct labels and the
corank should be $1$.  We do not have a complete characterisation for this
case.

If both simplices are of dimension $>1$, $G_1+G_2$ falls in the list
in~\cite{esselmann1996} and~\cite{tumarkin2004}.  The list contains eight
graphs. Each graph $G$ in the list is obtained by connecting two
$(1^s,0)$-graphs $G_1$ and $G_2$.  We extend $G_1$ and $G_2$ to two $(2^s,0)$
graphs $G_1+u_1$ and $G_2+u_2$ in which $u_1$ and $u_2$ are respectively the
unique real vertices (possibly isolated).  We then obtain a candidate
$(2,1)$-graph $G$ by identifying $u_1$ and $u_2$ as a single vertex $u$.
Finally, we calculate the corank, and verify the level of the candidate by
checking the level of $G-v_1-v_2$ for $v_1\in G_1$ and $v_2\in G_2$.  This
time, we only need the level of $G-v_1-v_2$ to be always $\le 1$.  If $u$ is an
isolated vertex, the result is clearly a $(2,1)$-graph.  Otherwise, there are
three connected $(2,1)$-graph.  They are listed in Figure~\ref{fig:Pyr1-1-nn},
where the white vertex correspond to the base facet.
\begin{figure}[h]
	\centering
	\includegraphics[width=.8\textwidth]{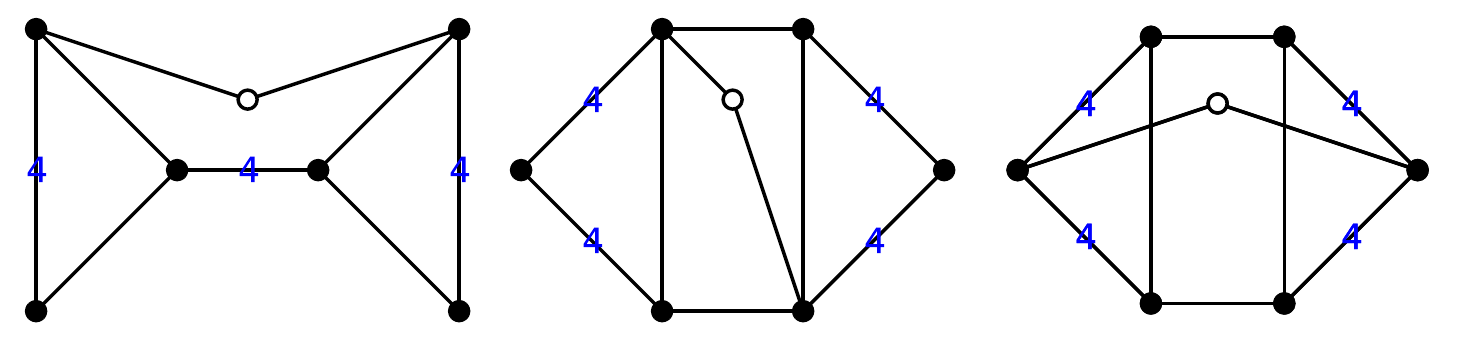}
	\caption{
	  The three connected $(2,1)$-graphs whose Coxeter polytope has the type of
	  $\Pyramid(\triangle\times\triangle)$ with space-like apex.
	} \label{fig:Pyr1-1-nn}
\end{figure}

\subsection{$\cP$ has the type of $\Pyramid^2(\triangle\times\triangle)$}

In this case, $\cP$ can be viewed as a pyramid in two different ways with
different apexes and bases.  The two base facets are of the type
$\Pyramid(\triangle\times\triangle)$, and are represented by two vertices $u$
and $v$ in the Coxeter graph.  The rest of the graph consists of two parts, say
$G_1$ and $G_2$, representing the two simplices.  The intersection of the two
base facets is a ridge of $\cP$ with the combinatorial type
$\triangle\times\triangle$.  Vertices on this base ridge are all simple.  Every
$k$-face of $\cP$, except for the two apexes and the edge connecting them,
corresponds to a subgraph of $G$ obtained by deleting $k+2$ vertices, including
at least one vertex from both $G_1$ and $G_2$.  The stabilizer of the edge
connecting the two apexes is represented by $G_1+G_2$.  Its level is $0$ since
it corresponds to an edge of $\cP$. 

If a $(2,0)$-graph $H$ has only two real vertices $u$ and $v$, and $H-u-v$ is
of affine type, then we say that $u+v$ is the hinge of $H$.
\begin{lemma}
	If $\cP$ has the combinatorial type of a $2$-fold pyramid over a product of
	two simplices, then
	\begin{enumerate}[label=\emph{(\roman*)}]
		\item $G_1$ and $G_2$ are both affine and are not connected to each
			other;

		\item $G_1+u+v$ and $G_2+u+v$ are both $(2,0)$-graphs with hinge $u+v$;

		\item for any $v_2\in G_2$, the subgraph $G_1+u+v+v_2$ is a $(3,0)$-graph,
			in which $v_2$ is a surreal vertex, while no vertex of $G_1$ is surreal.
	\end{enumerate}
\end{lemma}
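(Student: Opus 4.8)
The plan is to analyse the edge~$E$ joining the two apexes exactly as the apex was analysed in the two pyramid cases, only one dimension lower. Write $\Pi=\Span(E)$ for the plane it spans, so that $G_1+G_2$, the stabiliser of $E$, consists precisely of the facet normals that vanish on $E$ and hence lie in $\Pi^\perp$. Since $E$ is a $1$-face of the level-$2$ polytope $\cP$, the subgraph $G_1+G_2$ is of level~$0$, i.e.\ $\cB$ is positive semidefinite on $\Span(G_1+G_2)$. The $2$-fold pyramid makes $E$ a \emph{non-simple} face, so there is exactly one more normal in $G_1+G_2$ than the dimension of $\Pi^\perp$; the Gram matrix of $G_1+G_2$ is therefore singular, which rules out finite type. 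Consequently $E$ is light-like, $G_1+G_2$ is affine, and $\Pi$ is tangent to the light cone along a single null ray $\RR\delta$, with $\delta$ spanning the radical of $\cB|_\Pi$ and fixed by all of $G_1+G_2$.

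To split off the two affine pieces I would invoke the combinatorics of the link: the link of $E$ in a $2$-fold pyramid over $\triangle\times\triangle$ is combinatorially the product $\triangle\times\triangle$, so $G_1+G_2$ is an affine Coxeter group whose fundamental chamber is a product of two simplices. As an irreducible affine chamber is a single simplex, the group must decompose as a direct product of two irreducible affine factors, one for each simplex, and these factors are exactly $G_1$ and $G_2$. This proves (i): both $G_1$ and $G_2$ are affine and, lying in different factors, are mutually orthogonal, hence not connected. Identifying the two affine factors from the product structure of the link is the analogue of \cite{tumarkin2004}*{Lemma~4} and is the crux of the lemma.

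For (ii) and (iii) the mechanism is that adjoining the apical facets $u$ and $v$ to the light-like space $\Span(G_1)$ restores non-degeneracy. One first checks that the two apexes are space-like, so that $\cB(u,\delta)\ne 0$ and $\cB(v,\delta)\ne 0$; since $\delta\perp\Span(G_1)$ this removes the radical $\RR\delta$ and makes $\cB$ Lorentzian on $\Span(G_1+u+v)$. Moreover the unique linear relation among the facet normals of $\cP$ is the coincidence $\delta_1=\delta_2$ of the null roots of $G_1$ and $G_2$, which is supported on $G_1\cup G_2$ and genuinely involves $G_2$; restricting to $G_1\cup\{u,v\}$ leaves no relation, so $G_1+u+v$ has corank~$0$. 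The levels I would then read off from the face--subgraph dictionary: deleting $k+2$ vertices with at least one from each $G_i$ yields the stabiliser of a $k$-face, which is of level~$0$ once $k\ge 1$ since $\cP$ is of level~$2$. Hence every $2$-facial subsystem of $G_1+u+v$ is either $G_1$ itself (affine) or such a $k$-face with $k\ge 1$, while deleting $u$ alone leaves the Lorentzian $G_1+v$; this forces level exactly~$2$, and the same count shows $G_1+u$ and $G_1+v$ are $(1,0)$-graphs whereas deleting a vertex of $G_1$ is not, so $u,v$ are the only real vertices and $u+v$ is a hinge. For (iii) the identical bookkeeping applied to $G_1+u+v+v_2$ gives corank~$0$, makes every $3$-facial subsystem a $k$-face with $k\ge 1$ (level~$0$) while inheriting an indefinite $2$-facial from $G_1+u+v$, hence level~$3$; deleting $v_2$ returns the $(2,0)$-graph $G_1+u+v$, so $v_2$ is surreal, whereas deleting any $\alpha\in G_1$ leaves $(G_1-\alpha)+u+v+v_2$, a genuine $k$-face of $\cP$ of level $\le 1$, so no vertex of $G_1$ is surreal.

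The main obstacle is the corank bookkeeping in (i): concluding that $G_1+G_2$ has corank exactly~$2$---so that \emph{both} factors are affine rather than one being finite---depends on the facet normals through $E$ spanning the whole normal plane $\Pi^\perp$, which is where the genuineness of $\cP$ as a polytope is used. The low-dimensional cases in which one or both simplices is a segment must be handled separately, as in the pyramid subsections, since then an $\tilde A_1$ factor appears and the link acquires a segment factor. A reassuring check is that re-adjoining $u$ and $v$ to the corank-$2$ graph $G_1+G_2$ must return the corank-$1$ graph $G$: this is precisely the statement that $u$ and $v$ pair non-trivially with $\delta$, and the single relation $\delta_1=\delta_2$ forcing corank~$2$ on $G_1+G_2$ is at the same time the unique dependence of $\cP$.
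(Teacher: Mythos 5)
Your argument is essentially correct, but it reaches part (i) by a different route than the paper. The paper's (very terse) proof rests on the single cited fact that the apex stabilizers $G_1+G_2+u$ and $G_1+G_2+v$ are $(1,1)$-pyramids in Tumarkin's list, from which (i) follows by the already-established light-like-apex lemma (Tumarkin's Lemma~4), and from which the space-likeness of the two apexes is automatic (a $(1,1)$-pyramid is level $1$, not level $0$). You instead work directly with the edge $E$ joining the apexes: its stabilizer $G_1+G_2$ is level $0$ and, being $d$ normals confined to the $(d-1)$-dimensional $\Pi^\perp$, has singular Gram matrix, hence is affine; the product structure of the link of $E$ then splits it into two irreducible affine factors. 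This is a legitimate and arguably cleaner way to get (i) -- it is the edge-level analogue of the apex-level argument the paper reuses -- and your observation that the radicals of $G_1$ and $G_2$ must coincide with the null ray $\RR\delta$ of $\Pi$ (because null vectors of the positive semidefinite $\Pi^\perp$ lie in its radical) correctly identifies the unique linear relation of $G$, which is what drives the corank-$0$ claims in (ii) and (iii). The level computations via the face--subgraph dictionary match the paper's ``same type of arguments as before.''

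The one step you defer -- ``one first checks that the two apexes are space-like'' -- is precisely the content of the paper's cited fact, and it is genuinely load-bearing: if an apex were light-like it would have to be $\proj\delta$ (the only null direction in $\Pi$), forcing the opposite base normal, say $u$, to satisfy $\cB(u,\delta)=0$; then $\Span(G_1+u)\subseteq\delta^\perp$ is positive semidefinite, $u$ fails to be a real vertex of $G_1+u+v$, and conclusion (ii) collapses. So you should supply the check, e.g.: the apex stabilizer contains the infinite affine group $G_1+G_2$, ruling out a time-like apex; and a light-like apex would make both base normals orthogonal to $\delta$ only if both apexes coincide with $\proj\delta$ (impossible), while a single light-like apex is excluded because its vertex figure would have to be a Euclidean fundamental domain, i.e.\ a product of simplices, whereas $\Pyramid(\triangle\times\triangle)$ is not simple. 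A small logical tidy-up: linear independence of $G_1\cup\{u,v\}$ alone does not give corank $0$; you also need nondegeneracy of its span, which you do obtain from $\cB(u,\delta)\ne 0$ -- just make the order of deductions explicit.
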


The proof use the same type of arguments as before, and use the fact that
$G_1+G_2+u$ and $G_1+G_2+v$ are $(1,1)$-pyramids enumerated
in~\cite{tumarkin2004}.  We now sketch the procedure for enumerating Coxeter
polytopes of this type.

If one of the simplices is of dimension $1$, we construct a candidate
$(2,1)$-graph as follows.  Let $H$ be an affine graph and $H+u+v$ be a
$(2,0)$-graph with hinge $u+v$.  We extend $H$ to a $(3,0)$-graph $H+w$ such
that $w$ is a surreal vertex but no vertex in $H$ is surreal.  We extend $H$ to
another $(3,0)$-graph $H+w'$ in a second (possibly the same) way, and connect
$w$ and $w'$ by a solid edge with label $\infty$.  We require further that
$u+v+w$ and $u+v+w'$ are of level $0$, and $u+w+w'$ and $v+w+w'$ are connected.
This guarantees that $u+v+w+w'$ is a $(2,0)$-graph and $u+v$ is the hinge.
What we obtain is then a candidate $(2,1)$-graph.  

All $(2,0)$-graphs with a hinge and $\ge 5$ vertices are listed in
Figure~\ref{fig:L2Hinges}.  Based on this list, the procedure above gives $221$
candidate $(2,1)$-graphs.  After verification of corank and level, $49$ of them
are confirmed as $(2,1)$-graphs.  They are listed in table~\ref{tbl:Pyr2-1n},
in which we give the position of the $(2,0)$-graph $H+u+v$ in
Figure~\ref{fig:L2Hinges}, and the four labels on the edges connecting $w$ and
$w'$ to $u$ and $v$.

If both simplices are of dimension $1$, the Coxeter graph of the
$4$-dimensional $2$-fold pyramid over a square is in the form of
\tikz[scale=.8, baseline=-.5em]{
  \fill (0,.5) circle (.1);
  \fill (0,-.5) circle (.1);
  \fill (150:1) circle (.1);
  \fill (210:1) circle (.1);
  \fill (30:1) circle (.1);
  \fill (-30:1) circle (.1);
  \draw (0,.5)--(150:1)--(0,-.5)--(210:1)
  		--(0,.5)--(30:1)--(0,-.5)--(-30:1)
  		--(0,.5)--(0,-.5);
	\draw (150:1) -- node[midway] {$\infty$} (210:1); 
	\draw (30:1) -- node[midway] {$\infty$} (-30:1); 
}.
To be a $(2,1)$-graph, each of the four unlabeled triangles should be of level
$0$, either triangle on the left and either triangle on the right should form a
graph of level~$\le 1$, and the corank should be $1$.   We do not have a
complete characterisation for this case.

If both simplices are of dimension $>1$, we construct a candidate $(2,1)$-graph
by taking two $(2,0)$-graphs with hinges and identifying their hinges (possibly
in two different ways).  We then verify the corank and the level of each
candidate.  The latter is done by checking the level of $G-v_1-v_2$ for each
$v_1\in G_1$ and $v_2\in G_2$.  Recall that $G$ is of level $2$, if the level
of $G-v_1-v_2$ is always $\le 1$ but not always $0$.  In
Table~\ref{tbl:Pyr2-nn}, we list all the $36$ polytopes of this class by giving
the position of $G_1+u+v$ and $G_2+u+v$ in Figure~\ref{fig:L2Hinges}
respectively.  It turns out that, for every pair in the table, there is a
unique way to identify the hinges up to graph isomorphism.

\subsection{Remark and discussion}

We have seen a lot of level-$2$ Coxeter graphs.  The algorithms for
classification are implemented in the computer algebra system Sage~\cite{sage}.
For some cases of low rank, because of the large number (even infinite) of
graphs, we gave characterisations and construction methods instead of explicit
lists.  For pyramid (space-like apex) and $2$-fold pyramids over squares, our
characterisation is not satisfactory.  For pyramids over triangular prisms with
space-like apex, we ruled out labels of large value by computer program, but
the reliability of computer can be questioned.

All these graphs correspond to infinite ball packings that are generated by
inversions.  For explicit images of ball packings, the readers are referred to
the artworks of Leys'~\cite{leys2005}.  The $3$-dimensional ball packings in
Leys' paper (and also on his website) are inspired from~\cite{baram2004}.
Similar idea was also proposed by Bullett and Mantica~\cite{bullett1992,
bullett1995}, who also noticed generalizations in higher dimensions.

However, the packings considered in these literatures are very limited.  In our
language, the Coxeter polytopes associated to these packings only have the
combinatorial type of pyramid over regular polytopes.  In~\cite{bullett1992},
the authors were aware of Maxwell's work, but explained that: 
\begin{quote}
	Our approach via limit sets of Kleinian groups is more naive, replacing
	arguments about weight vectors in Minkowsky N-space by elementary geometric
	arguments involving polygonal tiles on the Poincare disc: it mirrors the
	algorithm we use to construct the circle-packings and seems well adapted to
	computation of the exponent of the packing and other scaling constants.
\end{quote}
On the contrary, weight vectors are very useful for investigations.  In fact,
weight vectors only make the computation of the \emph{exponent} (the growth
rate of the curvatures) much easier.  One easily verifies that the height of a
weights is asymptotically equivalent to the curvature of the corresponding
ball.

\begin{remark}
	The Hausdorff dimension of the residual set of infinite ball packings are
	usually approximated by computing the exponent.  In the literature, Boyd's
	works (e.g.~\cite{boyd1974}) are often cited to support this numeric
	estimate.  However, this was not fully justified until recently by Oh and
	Shah~\cite{oh2012}.
\end{remark}

Allcock~\cite{allcock2006} proved that there are infinitely many Coxeter
polytopes in lower dimensional hyperbolic space.  However, we would like to
point out that the situation is not completely dark.  We notice that the
``doubling trick'' used in Allcock's construction produces Coxeter subgroups of
finite index, so the infinitely many hyperbolic Coxeter groups constructed
in~\cite{allcock2006} are all commensurable.  It has been noticed
in~\cite{maxwell1982}*{\S~4} that commensurable Coxeter groups of level-$2$
correspond to the same ball packing.  Indeed, if two Coxeter groups are
commensurable, their Coxeter complex is the subdivision of the same coarser
Coxeter complex.  

Therefore, it makes more sense to enumerate commensurable classes of Coxeter
groups, as Maxwell did in \cite{maxwell1982}*{Table~II}.  For root systems
of corank $0$, the commensurable classes and subgroup relations have been
studied for level~$1$ and $2$ in~\cite{maxwell1998}, and are completely
determined for level~$1$ by Johnson et al.~\cite{johnson2002}.  Despite of
Allcock's result, we may still ask: Are there infinitely many commensurable
classes for level-$l$ Coxeter groups acting on lower dimensional hyperbolic
spaces?  For level $1$ Coxeter groups, the answer is ``yes'' in dimension $2$
(triangle groups), $3$~\cite{maclachlan2003}*{\S~4.7.3}, $4$ and $5$
\citelist{\cite{makarov1968} \cite{vinberg1985}*{\S~5.4}}.  The constructions
in dimension $3$--$5$ made use of level-$1$ polytopes of low corank.

\section*{Acknowledgement} 

The author is grateful to Pavel Tumarkin for inspiring discussions and
interesting references during my one-day visit at Durham university, which
helped improving the paper and the program.  I would also like to thank
Jean-Philippe Labb\'e for helpful discussions, as well as Michael Kapovich and
the anonymous referee for suggestions on preliminary versions of the paper.  

\bibliography{References}

{\footnotesize
	\begin{table}[p]
		\begin{tabular}{c c l|c c l}
			%{>{\centering}p{4em}>{\centering}p{4em}l|>{\centering}p{4em}>{\centering}p{4em}l}
			% \hline
			$G_1$ & $G_2$ & \multicolumn{1}{l|}{Edges between $G_1$ and $G_2$}&
			$G_1$ & $G_2$ & \multicolumn{1}{l}{Edges between $G_1$ and $G_2$}\\
			% \hline
			 2& 13& (1,0,3), (2,2,3)&
			 4& 19& (0,0,3), (3,1,3)\\
 			 6& 11& (0,1,3)&
 			 6& 13& (0,0,3), (0,2,3)\\
 			10& 17& (0,1,3), (3,1,3)&
 			10& 22& (0,0,3), (3,1,3)\\
 			11& 14& (1,0,3)&
 			11& 22& (1,2,3)\\
 			12& 12& (0,1,3)&
 			12& 12& (2,2,3)\\
 			12& 15& (0,0,3), (0,2,3)&
 			12& 15& (2,1,3)\\
 			12& 19& (0,0,3), (0,1,3)&
 			12& 19& (1,2,3)\\
 			12& 24& (2,0,3), (2,1,3)&
 			12& 27& (0,0,3), (2,1,3)\\
 			13& 14& (0,0,3), (2,0,3)&
 			13& 22& (0,0,3), (2,1,3)\\
 			13& 26& (0,0,3), (2,2,3)&
 			15& 15& (0,0,3), (2,2,3)\\
 			15& 15& (1,1,3)&
 			15& 19& (0,0,3), (2,1,3)\\
 			16& 30& (0,0,3), (2,1,3)&
 			17& 22& (0,0,3), (2,1,3)\\
 			17& 26& (0,0,3), (2,2,3)&
 			18& 18& (0,1,3), (1,0,3)\\
 			24& 24& (0,0,3), (1,1,3), (2,2,3)&
 			25& 25& (0,0,3), (0,0,4), (1,1,3), (1,1,4)\\
 			% \hline
		\end{tabular}
		\caption{
			The first two columns are the positions of $G_1$ and $G_2$ in
			Figure~\ref{fig:L1Ports}, and the third columns are the edges connecting
			$G_1$ and $G_2$.  The ports in Figure~\ref{fig:L1Ports} are numbered, so
			the edges are represented in the format of (port in $G_1$, port in $G_2$,
			label).  By connecting $G_1$ and $G_2$ by the indicated edges, we obtain
			the $(2,1)$-graphs for the products of two simplices (both of dimension
			$>1$).  
		}
 		\label{tbl:Pyr0-nn}
	\end{table}

	\begin{table}[p]
		\centering
		\begin{tabular}{r@{--}l r@{--}l r@{--}l r@{--}l r@{--}l r@{--}l r@{--}l r@{--}l r@{--}l r@{--}l}
			1 &2 & 1 &5 & 1 &9 & 1 &12& 1 &15& 1 &16& 1 &19& 1 &22& 1 &24& 1 &27\\
			2 &3 & 2 &7 & 2 &8 & 2 &23& 3 &5 & 3 &13& 3 &15& 3 &16& 3 &20& 3 &28\\
			4 &9 & 4 &12& 4 &19& 4 &22& 4 &24& 4 &27& 5 &7 & 5 &8 & 5 &23& 7 &9\\
			7 &12& 7 &15& 7 &16& 7 &19& 7 &22& 7 &24& 7 &27& 8 &9 & 8 &12& 8 &15\\
			8 &16& 8 &19& 8 &22& 8 &24& 8 &27& 9 &26& 10&12& 10&19& 10&27& 11&12\\
			11&19& 11&27& 12&18& 12&26& 13&23& 15&23& 16&23& 18&19& 18&27& 19&26\\
			20&23& 22&26& 23&28& 24&26& 26&27
		\end{tabular}
		\caption{For each pair $i$--$j$ in the list, by identifying the white vertices of the $i$-th and the $j$-th graph in Figure~\ref{fig:L1Hinges}, we obtain the $(2,1)$-graph of a pyramid over the product of two simplices (both of dimension $>1$).}
 		\label{tbl:Pyr1-0-nn}
	\end{table}

	\begin{table}[p]
		\centering
		\begin{tabular}{r@{:}l r@{:}l r@{:}l r@{:}l r@{:}l r@{:}l r@{:}l}
 	 	 	 4&(2,3)(3,2)&  4&(2,4)(4,2)&  5&(2,3)(3,2)&  6&(2,3)(3,2)& 15&(2,2)(3,3)\\
 	 	 	 15&(2,3)(3,2)& 15&(2,4)(4,2)& 15&(3,3)(3,3)& 15&(3,4)(4,3)& 24&(2,3)(4,3)\\
 	 	 	 28&(2,2)(3,3)& 28&(2,3)(3,2)& 28&(3,3)(3,3)& 32&(2,2)(3,3)& 32&(2,3)(3,2)\\
 	 	 	 32&(2,4)(4,2)& 32&(3,3)(3,3)& 37&(2,3)(3,2)& 38&(2,2)(3,3)& 38&(2,3)(3,2)\\
 	 	 	 38&(3,3)(3,3)& 39&(2,3)(3,2)& 40&(2,2)(3,3)& 40&(2,3)(3,2)& 40&(3,3)(3,3)\\
 	 	 	 41&(2,2)(3,3)& 41&(3,3)(3,3)& 42&(2,2)(3,3)& 42&(3,3)(3,3)& 48&(3,2)(3,3)\\
 	 	 	 49&(2,2)(4,3)& 49&(2,3)(4,2)& 49&(3,2)(3,4)& 49&(4,3)(4,3)& 57&(2,3)(4,3)\\
			 59&(2,2)(3,3)& 59&(3,3)(3,3)& 61&(2,2)(3,3)& 61&(2,3)(3,2)& 61&(2,4)(4,2)\\
			 61&(3,3)(3,3)& 65&(2,3)(3,2)& 66&(2,2)(3,3)& 66&(2,3)(3,2)& 66&(3,3)(3,3)\\
			 67&(2,3)(3,2)& 68&(2,2)(3,3)& 68&(2,3)(3,2)& 68&(3,3)(3,3)
		\end{tabular}
		\caption{
			For each entry $i$:$(a,b)(c,d)$ in the list, take the $i$-th graph
			$H+u+v$ in Figure~\ref{fig:L2Hinges}, where $u$ is the gray vertex and
			$v$ is the white vertex.  Introduce two new vertices $w$ and $w'$, and
			connect them to $H$ such that $wu$ has label $a$, $wv$ has label $b$,
			$w'u$ has label $c$, $w'v$ has label $d$, and finally label the edge
			$ww'$ by $\infty$.  The result is the $(2,1)$-graph of a $2$-fold pyramid
			over a prism.
		}
 		\label{tbl:Pyr2-1n}
	\end{table}

	\begin{table}[p]
		\centering
		\begin{tabular}{r@{--}l r@{--}l r@{--}l r@{--}l r@{--}l r@{--}l r@{--}l r@{--}l r@{--}l r@{--}l r@{--}l r@{--}l}
 	 	 	 4&4 &  8&15&  8&22&  8&56&  8&62& 13&13\\
 	 	 	 13&49& 15&15& 15&22& 15&32& 15&35& 15&54\\
			15&56& 15&61& 15&62& 22&22& 22&32& 22&35\\
			22&54& 22&56& 22&61& 22&62& 32&56& 32&62\\
			35&56& 35&62& 38&56& 49&49& 54&56& 54&62\\
			56&56& 56&61& 56&62& 56&66& 61&62& 62&62
		\end{tabular}
		\caption{
			For each pair $i$--$j$ in the list, by identifying the white/light-gray
			vertices of the $i$-th and the $j$-th graph in Figure~\ref{fig:L2Hinges},
			we obtain the $(2,1)$-graph of a $2$-fold pyramid over the product of two
			simplices (both of dimension $>1$).
		}
 		\label{tbl:Pyr2-nn}
	\end{table}
}

\begin{figure}[h]
	\centering
	\includegraphics[height=.9\textheight]{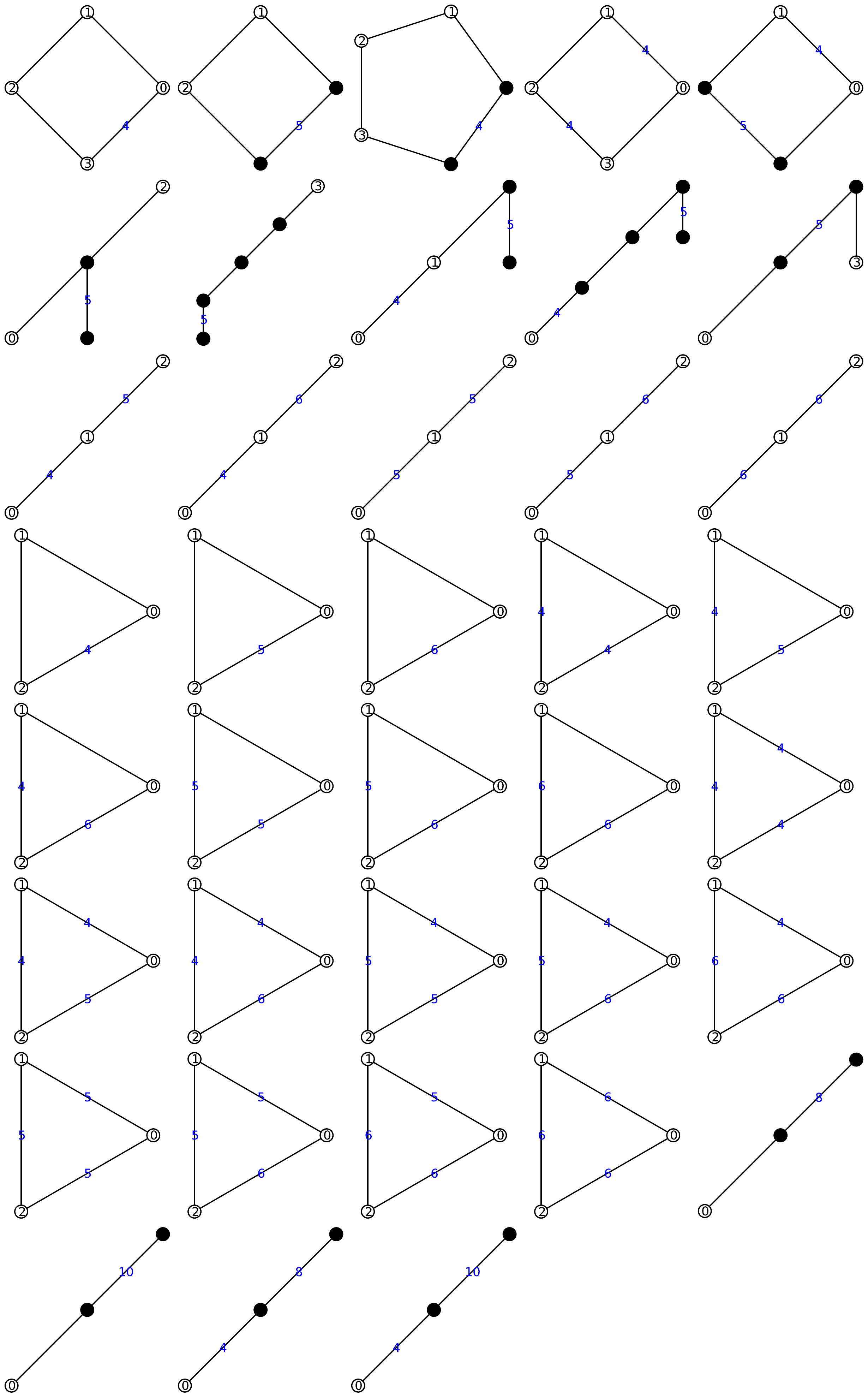}
	\caption{$(1^s,0)$-graphs of $\ge 3$ vertices with ports (numbered white vertices)}
	\label{fig:L1Ports}
\end{figure}

\begin{figure}[p]
	\centering
	\includegraphics[width=\textwidth]{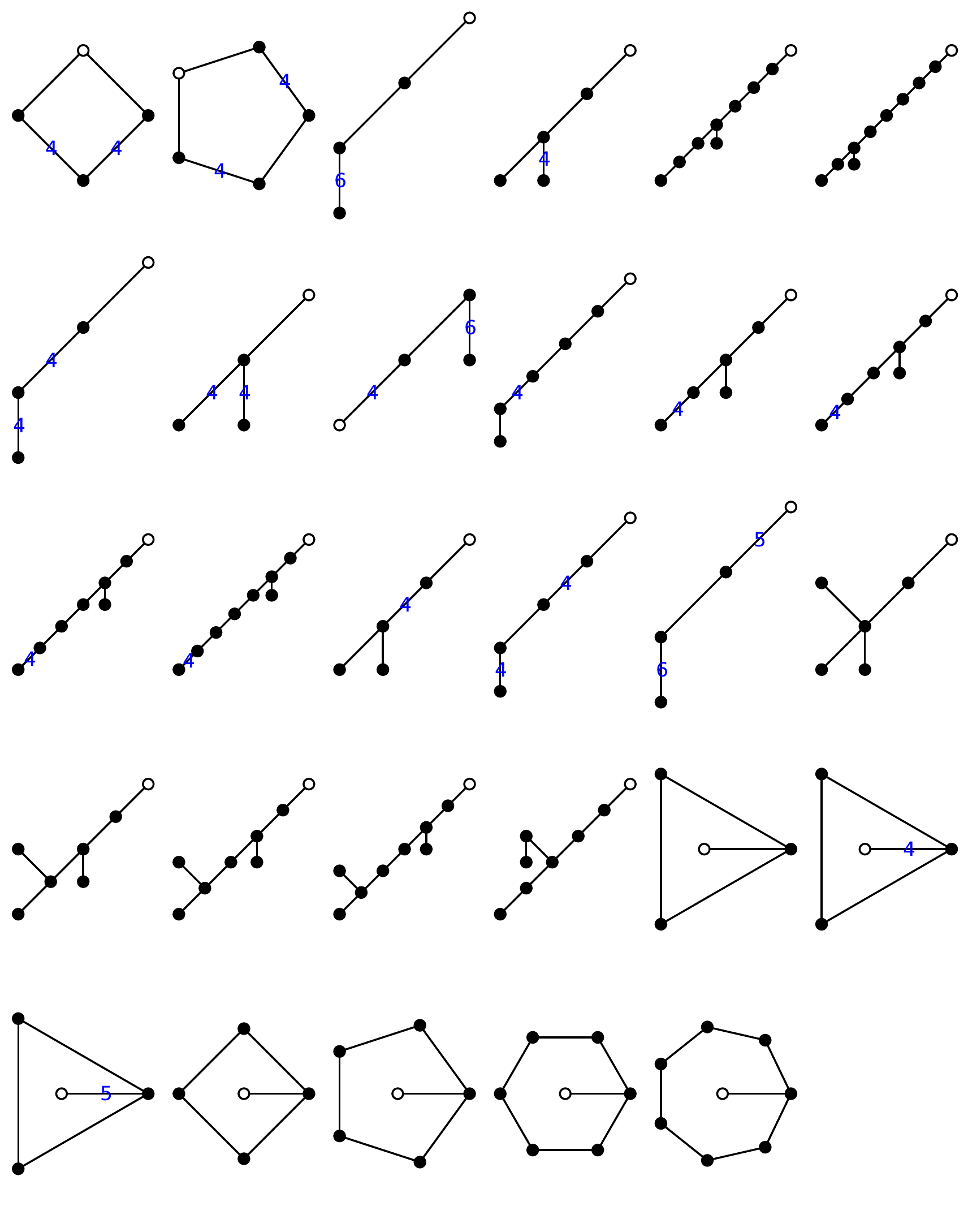}
	\caption{non-strict $(1,0)$-graphs of $\ge 4$ vertices with a hinge (the white vertex)}
	\label{fig:L1Hinges}
\end{figure}

\begin{figure}[p]
	\centering
  \includegraphics[height=.9\textheight]{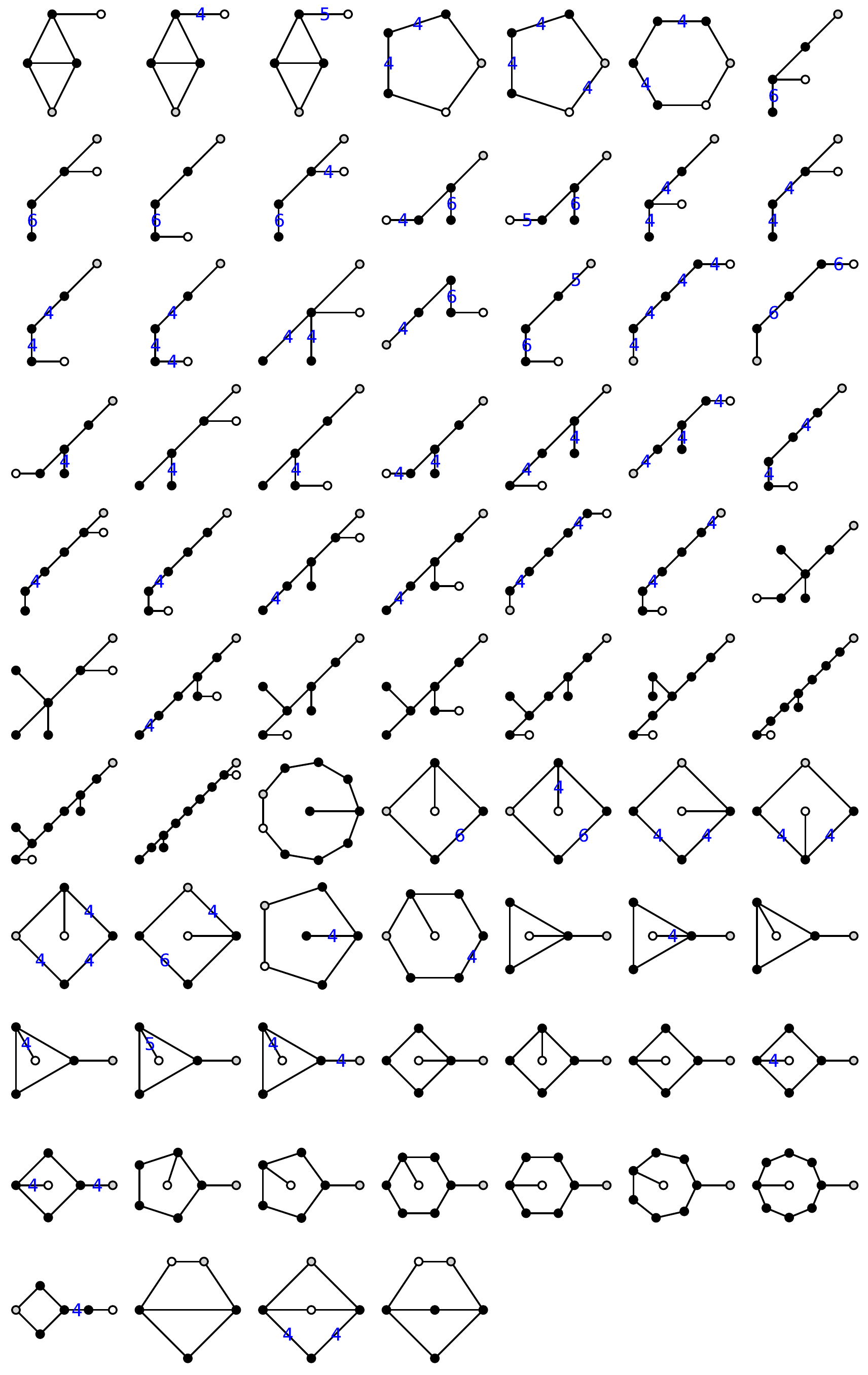}
  \caption{$(2,0)$-graphs of $\ge 5$ vertices with a hinge (the white and the light-gray vertices)}
  \label{fig:L2Hinges}
\end{figure}

\end{document}